\documentclass{amsart}

\usepackage{stmaryrd}

\usepackage{amsmath, amsthm, amssymb}

\theoremstyle{definition}%non-italic in Theorem%

\usepackage{scalerel}%Making smaller subscripts%
\usepackage{mathrsfs}

\usepackage{tikz}
\usepackage{hyperref}%hyperlink%
\usepackage{placeins}%FloatBarrier%

\usepackage{proof}

\usepackage{verbatim}%Comment%
\usepackage{mathtools}
\usepackage{cleveref}
\usepackage{xspace}

\usepackage{bbm}

\usepackage[bottom]{footmisc}

\usepackage{enumitem}

\usepackage{biblatex} %bib%
\newtheorem{theorem}{Theorem}
\newtheorem*{theorem*}{Theorem}%unnumered theorem%
\newtheorem*{lemma*}{Lemma}%unnumered lemma%
\newtheorem*{prop*}{Proposition}%unnumered proposition%
\newcommand{\namedresultname}{Result}
\newtheorem{namedresultinner}[theorem]{\namedresultname}
\newtheorem*{namedresultinnerstar}{\namedresultname}

\newenvironment{namedresult*}[1]{%
  \renewcommand{\namedresultname}{#1}%
  \begin{namedresultinnerstar}%
}{\end{namedresultinnerstar}}
\newtheorem*{maintheorem*}{Main Theorem}%unnumered maintheorem%

\newtheorem*{maintheorems*}{Main Theorems}

\newtheorem{definition}[theorem]{Definition}
\newtheorem{corollary}{Corollary}[theorem]
\newtheorem{prop}[theorem]{Proposition}
\newtheorem{lemma}[theorem]{Lemma}

\newtheorem{remark}[theorem]{Remark}
\newtheorem{claim}{Claim}[theorem]
\newcounter{implicationdiagram}

\crefname{theorem}{Theorem}{Theorems}
\crefname{lemma}{Lemma}{Lemmas}
\crefname{prop}{Proposition}{Propositions}
\crefname{definition}{Definition}{Definitions}
\crefname{corollary}{Corollary}{Corollaries}
\crefname{claim}{Claim}{Claims}
\crefname{remark}{Remark}{Remarks}
\crefname{section}{Section}{Sections}
\crefname{subsection}{Subsection}{Subsections}
\crefname{implicationdiagram}{Implication Diagram}{Implication Diagrams}

\newcommand\Stage{\textup{Stage}}
\newcommand\form{\preccurlyeq}
\newcommand{\avail}{\mathrel{\triangleleft}}
\newcommand\Slice{\textup{Slice}}
\newcommand\Lev{\textup{Lev}}
\newcommand\SemiLev{\textup{SemiLev}}
\newcommand\Pow{\mathcal{P}}

\newcommand\Ur{\textup{Ur}}
\newcommand\Set{\textup{Set}}
\newcommand\pot{\mathbin{\mathparagraph}}

\newcommand\Ord{\textup{Ord}}

\renewcommand{\ker}{\textup{ker}}

\newcommand\TheoryName[1]{\ensuremath{#1}\xspace}
\newcommand\TheoryHyphen{\text{-}\allowbreak}
\newcommand\TheorySpace{\allowbreak\ }
\xspaceaddexceptions{+}

\newcommand\RP{\TheoryName{\mathsf{RP}}}

\providecommand\ST{}
\renewcommand\ST{\TheoryName{\mathsf{ST}}}
\providecommand\LT{}
\renewcommand\LT{\TheoryName{\mathsf{LT}}}
\newcommand\STU{\TheoryName{\mathsf{STU}}}
\makeatletter
\DeclareRobustCommand\LTU{\@ifnextchar2{\LTU@two}{\TheoryName{\mathsf{LTU}}}}
\def\LTU@two2{\TheoryName{\mathsf{LTU}_{2}}}
\makeatother
\newcommand\LTUMinus{\TheoryName{\mathsf{LTU}^{-}}}
\newcommand\LTUPlus{\TheoryName{\mathsf{LTU}^{+}}}
\newcommand\ZF{\TheoryName{\mathsf{ZF}}}

\newcommand\AC{\TheoryName{\mathsf{AC}}}
\newcommand\UrDef{\TheoryName{\mathsf{UrDef}}}
\newcommand\UrStage{\TheoryName{\mathsf{Ur}\TheoryHyphen\mathsf{Stage}}}
\newcommand\UrSet{\TheoryName{\mathsf{Ur}\TheoryHyphen\mathsf{Set}}}
\newcommand\SpecU{\TheoryName{\mathsf{Spec}\TheoryHyphen\mathsf{U}}}
\newcommand\Extensionality{\TheoryName{\mathsf{Extensionality}}}
\newcommand\OrderAxiom{\TheoryName{\mathsf{Order}}}
\newcommand\Staging{\TheoryName{\mathsf{Staging}}}
\newcommand\Priority{\TheoryName{\mathsf{Priority}}}
\newcommand\Specification{\TheoryName{\mathsf{Specification}}}
\newcommand\Separation{\TheoryName{\mathsf{Separation}}}
\newcommand\SeparationTwo{\TheoryName{\mathsf{Separation}_{2}}}
\newcommand\Stratification{\TheoryName{\mathsf{Stratification}}}
\newcommand\StratificationMinus{\TheoryName{\mathsf{Stratification}^{-}}}
\makeatletter
\newcommand\Replacement{\@ifnextchar2{\Replacement@two}{\TheoryName{\mathsf{Replacement}}}}
\def\Replacement@two2{\TheoryName{\mathsf{Replacement}_{2}}}
\makeatother
\newcommand\Collection{\TheoryName{\mathsf{Collection}}}
\newcommand\CollectionTwo{\TheoryName{\mathsf{Collection}_{2}}}
\newcommand\Foundation{\TheoryName{\mathsf{Foundation}}}
\newcommand\Pairing{\TheoryName{\mathsf{Pairing}}}
\newcommand\Powerset{\TheoryName{\mathsf{Powerset}}}
\newcommand\UnionAxiom{\TheoryName{\mathsf{Union}}}
\newcommand\InfinityAxiom{\TheoryName{\mathsf{Infinity}}}

\newcommand\Tail{\TheoryName{\mathsf{Tail}}}

\newcommand\UnboundedPlus{\TheoryName{\mathsf{Unbounded}^{+}}}
\newcommand\StageDirected{\TheoryName{\mathsf{Stage}\TheoryHyphen\mathsf{Directed}}}
\newcommand\StageUnbounded{\TheoryName{\mathsf{Stage}\TheoryHyphen\mathsf{Unbounded}}}
\newcommand\StageUnboundedPlus{\TheoryName{\mathsf{Stage}\TheoryHyphen\mathsf{Unbounded}^{+}}}
\newcommand\StageRP{\TheoryName{\mathsf{Stage}\TheoryHyphen\mathsf{RP}}}
\newcommand\StageRPMinus{\TheoryName{\mathsf{Stage}\TheoryHyphen\mathsf{RP}^{\sim}}}
\newcommand\LevelDirected{\TheoryName{\mathsf{Level}\TheoryHyphen\mathsf{Directed}}}
\makeatletter
\newcommand\LevelUnbounded{\@ifnextchar2{\LevelUnbounded@two}{\TheoryName{\mathsf{Level}\TheoryHyphen\mathsf{Unbounded}}}}
\def\LevelUnbounded@two2{\TheoryName{\mathsf{Level}\TheoryHyphen\mathsf{Unbounded}_{2}}}
\makeatother
\newcommand\LevelUnboundedPlus{\TheoryName{\mathsf{Level}\TheoryHyphen\mathsf{Unbounded}^{+}}}
\newcommand\LevelRP{\TheoryName{\mathsf{Level}\TheoryHyphen\mathsf{RP}}}
\newcommand\LevelRPMinus{\TheoryName{\mathsf{Level}\TheoryHyphen\mathsf{RP}^{-}}}
\newcommand\LevelExtension{\TheoryName{\mathsf{Level}\TheorySpace\mathsf{Extension}}}
\newcommand\RPMinus{\TheoryName{\mathsf{RP}^{-}}}
\newcommand\RPSim{\TheoryName{\mathsf{RP}^{\sim}}}
\newcommand\RPSimCl{\TheoryName{\mathsf{RP}^{\sim}_{\mathsf{cl}}}}
\newcommand\RPCl{\TheoryName{\mathsf{RP}_{\mathsf{cl}}}}
\newcommand\PartialReflection{\TheoryName{\mathsf{Partial}\TheorySpace\mathsf{Reflection}}}
\newcommand\CompleteReflection{\TheoryName{\mathsf{Complete}\TheorySpace\mathsf{Reflection}}}
\newcommand\LevyMontagueReflection{\TheoryName{\mathsf{L\acute{e}vy}\TheoryHyphen\mathsf{Montague}\TheorySpace\mathsf{Reflection}}}

\makeatletter
\DeclareRobustCommand\ZFU{\@ifnextchar2{\ZFU@two}{\TheoryName{\mathsf{ZFU}}}}
\def\ZFU@two2{\TheoryName{\mathsf{ZFU}_{2}}}
\makeatother
\makeatletter
\newcommand\ZFCU{\@ifnextchar2{\ZFCU@two}{\TheoryName{\mathsf{ZFCU}}}}
\def\ZFCU@two2{\TheoryName{\mathsf{ZFCU}_{2}}}
\makeatother
\newcommand\ZFUR{\TheoryName{\mathsf{ZFU}_{\mathsf{R}}}}

\renewcommand{\restriction}{\mathord{\upharpoonright}}

\title{\textbf{The Iterative Conception Reconsidered}}
\author{Bokai Yao}\address[Bokai Yao]{Peking University}
 \email{bkyao@pku.edu.cn}
\urladdr{https://bokaiyao.com}
\thanks{The author used OpenAI Codex as an interactive research and writing aid to explore proof strategies and assist with LaTex editing. All mathematical arguments were independently verified and revised by the author, who takes full responsibility for the work. The  author was supported by NSFC No. 12401001 and the Fundamental Research Funds for the Central Universities, Peking University.}

\begin{document}

\begin{abstract}
We investigate the iterative conception of set in its most general form, allowing urelements without assuming that they form a set. We formulate this conception in two ways, as stage theory and as level theory, and develop a general theory of levels with urelements. Unlike their pure-set counterparts, the resulting stage and level theories are not set-theoretically equivalent; moreover, second-order level theory with urelements is not weakly quasi-categorical. We then consider further principles governing stages and levels, motivated by directedness, unboundedness, and reflection. Some of these principles restore set-theoretic equivalence between the corresponding theories, while their level-theoretic versions yield forms of quasi-categoricity. These principles form strict implication hierarchies, thereby revealing distinct stronger conceptions of set beyond the basic iterative conception
\end{abstract}

\maketitle

\tableofcontents

\section{A Myth of the Iterative Conception}\label{Section:Intro}
According to the iterative conception of set, every set is formed at a stage from objects available at that stage. Many standard axioms of set theory are thought to be justified by some version of this conception. But what is the iterative conception of set, exactly? The following principle is commonly assumed to be part of the iterative conception.
\begin{itemize}
 \item [] \UrStage There is a stage at which all urelements are available.
\end{itemize}
\textit{Urelements} are members of sets that are not themselves sets. The picture seems natural: before any sets exist, there is a first stage at which all the urelements are available. The story goes: since the objects available at any stage form a set at that stage, the urelements will form a set. Thus \UrStage justifies the following axiom.
\begin{itemize}
 \item [] \UrSet There is a set of all urelements.
\end{itemize}
Most set theorists focus on the universe of \textit{pure sets} as generated by the iterative conception. Here the iterative process begins with an initial stage, below every other stage, at which nothing is available; only the empty set is formed at this stage, and no urelements ever enter the process. In this setting, both \UrStage and \UrSet hold trivially. Yet many philosophers interested in universes containing impure sets also accept \UrStage, often on the basis of a standard argument. Lewis \cite[104]{Lewis1986PluralityWorlds}, when considering whether there is a set of all possible worlds, gives one version of this argument:
\begin{quote}
But all the [urelements], no matter how many there may be, get in already on the ground floor. So, after all, we have no notion what could stop any class of [urelements] - in particular, the class of all worlds - from comprising a set. ... So I continue to accept a set of all worlds, indeed a set of all [urelements].
\end{quote}

\noindent Menzel \cite[64]{Menzel2014WideSets} gives a more direct version of this argument.
\begin{quote}
The argument for [\UrSet], then, is simply this: One of the sets that can be formed from the [urelements] is the set of all of them. They are, after all, all there to be collected at the very beginning of the process. Hence, the set of all [urelements] is formed at the very first stage.
\end{quote}

\noindent Consequently, \UrStage and \UrSet are taken as part of the iterative conception throughout most of the existing literature (e.g., Boolos \cite[221]{Boolos1971IterativeConception}, McGee \cite[52--55]{McGee1997MathematicalLanguage}, Potter \cite[40--41]{Potter2004SetTheoryPhilosophy}, and Incurvati \cite[40--41]{Incurvati2020Conceptions}).

Something is off. The iterative conception is about how sets come into existence. It is \textit{not} supposed to tell us how many urelements there are or whether they form a set, as these matters should be decided by theories of urelements. Indeed, when supplemented with \UrSet, the iterative conception comes into tension with certain metaphysical theories according to which the urelements do not form a set.\footnote{For example, if possible worlds (or possible individuals) are urelements, familiar recombination arguments suggest that there are too many of them to form a set. This worry has been pressed against Lewis's own modal realism \cite[104]{Lewis1986PluralityWorlds}; Nolan \cite{Nolan1996RecombinationUnbound} and Sider \cite[250]{Sider2009WilliamsonManyNecessaryExistents} develop versions of the point, and related arguments are central to Hawthorne and Uzquiano \cite{HawthorneUzquiano2011Angels} and Uzquiano \cite{Uzquiano2015RecombinationParadox}. Or suppose that the urelements include propositions or truths. Then the Russell--Myhill tradition and Grim's argument that there is no set of all truths give us another natural route to the conclusion that such urelements need not form a set (see Grim \cite{Grim1984NoSetTruths} and Kment \cite[58--59]{Kment2022RussellMyhillGrounding}).}  Note that the concern is not whether these metaphysical arguments are sound; it is that a basic conception of set should remain metaphysically neutral. 

It is a myth that \UrStage is part of the iterative conception. The standard argument for \UrStage, as quoted above, is flawed by making two unjustified assumptions: (i) there is a \textit{first stage}, a stage below every other stage, and (ii) all urelements are available on the first stage. Neither assumption, however, is warranted by the basic iterative conception. It is a surprising result, essentially due to Scott \cite[211--212]{Scott1974AxiomatizingSetTheory}, that the bare description of the iterative conception implies that the stages are \textit{well-founded}. Yet the iterative conception gives us no reason to suppose that this relation is a \textit{well-order}. A perfectly coherent version of the iterative story might contain pairwise \textit{incomparable} \textit{initial stages}, with no stage below any of them; and beginning from each initial stage, sets are formed from the urelements available on that stage.

Even if there is a first stage, it remains unjustified to assume that all urelements must be available on that stage. Consider \textit{mereological fusions} of sets and urelements. The mereological fusion of Socrates and $\{\text{Socrates}\}$, written as $\text{Socrates}+\{\text{Socrates}\}$, is presumably not a set but can nevertheless have a singleton. On this assumption, the fusion is a urelement. But it then seems bizarre for $\text{Socrates}+\{\text{Socrates}\}$ to be available at the first stage, since its part $\{\text{Socrates}\}$ is not yet available there! Thus the following more natural picture suggests itself. 

\begin{center}
\begin{tikzpicture}[
  scale=.8,
  transform shape,
  every node/.style={font=\small, align=center, inner sep=1pt}
]
\node (first) at (0,0) {$\text{Socrates},\ \ldots$};
\node (second) at (0,1.15) {$\{\text{Socrates}\},\
  \ \text{Socrates}+\{\text{Socrates}\},\ \ldots$};
\node (third) at (0,2.3) {$\{\text{Socrates}+\{\text{Socrates}\}\},\
  \ \text{Socrates}+\{\{\text{Socrates}\}\},\ \ldots$};
\node (above) at (0,3.25) {$\vdots$};
\node[anchor=east] at (-4.4,0) {$\mathrm{stage}_0$};
\node[anchor=east] at (-4.4,1.15) {$\mathrm{stage}_1$};
\node[anchor=east] at (-4.4,2.3) {$\mathrm{stage}_2$};
\end{tikzpicture}
\end{center}

\noindent That is, the urelement $\text{Socrates}+\{\text{Socrates}\}$ is available at the second stage and, together with other available objects, generates new sets and fusions. At each stage, new fusions are available as urelements, so both \UrStage and \UrSet fail in this picture. Again, the point is not to endorse this metaphysical picture, which some might reject (e.g., Lewis \cite{Lewis1991PartsClasses}), or \textit{argue against} \UrStage. Rather, this alternative picture shows that \UrStage is not metaphysically neutral and therefore should not \textit{be part of} a basic conception of set.

This paper examines the iterative conception of set with urelements in its most basic and general form without assuming \UrSet. In Section \ref{sec:levels and stages}, drawing on the presentations of Boolos \cite{Boolos1989IterationAgain} and Button \cite{Button2021LevelTheory1}, we formulate the basic iterative conception with urelements in terms of both stages and levels. We develop a more general theory of levels with urelements \LTU and establish some basic facts. As a subtheory of \ZFU, \LTU has various kinds of models in which \UrSet fails. In Section \ref{sec: Comparison}, we compare \LTU with its stage counterpart \STU. Button's equivalence result in the pure setting no longer holds: the urelement level theory \LTU proves more about sets than its stage counterpart \STU. We argue that this offers a first indication that the basic iterative conception is not robust. Section \ref{sec:extendingBIS} considers several standard ways of extending the basic iterative conception through principles of directedness, unboundedness, and reflection for stage and level theory. Section \ref{sec:Equivalence} shows that some natural extensions restore set-theoretic equivalence between the stage and level approaches. Using this result, we prove that these extensions form two strict implication hierarchies, revealing different conceptions of set based on the iterative conception. Section \ref{section:quasi-categoricity} turns to the quasi-categoricity of level theory with urelements. We distinguish two notions of quasi-categoricity. While the level theory for pure sets is quasi-categorical in the strong sense, as shown by Button and Walsh \cite{ButtonWalsh2018PhilosophyModelTheory}, second-order \LTU is not even weakly quasi-categorical. However, the principles in Section \ref{sec:extendingBIS} restore weak quasi-categoricity to level theory. The usual stronger notion of quasi-categoricity, however, depends on whether the Axiom of Choice holds the metatheory.

\section{Two Formalizations: Levels and Stages}\label{sec:levels and stages}
To formulate the iterative conception in its most basic and general form, we should treat it as a story about nothing more than how sets are formed. Following Button \cite{Button2021LevelTheory1}, let us call this \textit{the Basic Iterative Story}.\\

\begin{itemize}
\item [] \textbf{Basic Iterative Story} Every set is formed at some stage. At any stage and for any \textit{things} available at that stage, we form a set whose members are exactly those things. We form nothing else at a stage.\\
\end{itemize}

\noindent The story is absolutely general: the plural ``\textit{things}'' includes both sets and urelements, so the story allows non-pure sets, such as $\{\text{Socrates}\}$ and $\{\text{Socrates}, \{\text{Socrates}\}\}$, to form; the story does not posit a stage where all urelements are available.

We now consider two prominent ways of formalizing the Basic Iterative Story based on the two existing approaches. This leads to two basic formal theories, but they do not tell the same iterative story as we expect them to. We will argue that this divergence result shows that the basic iterative conception of set, in its most general form, fails to be robust.

\subsection{\STU}
\noindent  The most direct way to formalize the iterative story is to talk about stages explicitly. This is the route taken by Boolos in both of his classic presentations of the iterative conception \cite{Boolos1971IterativeConception} and \cite{Boolos1989IterationAgain}. 
\begin{definition}
\textit{The language of urelement set theory}, $\mathcal{L}_\Ur$, is the first-order language whose non-logical symbols are a binary predicate $\in$ and a unary predicate $\Ur$ for urelementhood.
\end{definition}
\noindent We read $\Ur(x)$ as ``$x$ is a urelement'', and write $\Set(x)$ for $\neg\Ur(x)$, meaning ``$x$ is a set''.
\begin{definition}
\textit{The language of stage theory},  $\mathcal{L}_\textbf{stage}$, is a two-sorted first-order language which extends $\mathcal{L}_\Ur$ by adding stage variables, a relation $<$ between stages, read as ``is below'', and a relation $\form$ between objects and stages.
\end{definition}
\noindent Following the convention, we use boldface variables $\mathbf{r},\mathbf{s},\mathbf{t}$, ... for stages.However, $x\form\mathbf{s}$ will be read as ``$x$ occurs at $\mathbf{s}$''. We adopt this different reading to handle urelements and distinguish \textit{being formed} from \textit{being available}. Conceptually,  a set $x$ occurs at a stage \textbf{s} just in case $x$ is formed at \textbf{s}. If \textbf{s} is the first stage where $x$ is formed, $x$ is not yet available at \textbf{s} and will only become available at stages above \textbf{s}. However, urelements do not form at stages; when they occur at a stage, they immediately become available at the stage for forming sets. With our reading of $\form$, we can define the availability relation $\avail$.
\begin{definition}\
\begin{enumerate}
\item $x\prec\mathbf{s}$ abbreviates $\exists\mathbf{r}(\mathbf{r}<\mathbf{s}\land x\form\mathbf{r})$.
\item $x \avail \mathbf{s}$ abbreviates $x\prec\mathbf{s}\lor\bigl(\Ur(x)\land x\form\mathbf{s})$
\end{enumerate}
\end{definition}
\noindent That is, $x$ is available at a stage $\mathbf{s}$ just in case either $x$ occurs at a stage below $\mathbf{s}$, or $x$ is a urelement occurring at $\mathbf{s}$. 

\begin{definition}
\STU (\textit{Stage Theory with Urelements}) consists of the following axioms and schemes.\footnote{In this and all subsequent axiom lists, displayed open formulas are understood as universally closed. Parameters permitted in schemes, whether object or stage parameters, are suppressed.}
\begin{enumerate}
\item (\UrDef) \(\Ur(x)\rightarrow \forall y(y\notin x)\).

\item (\Extensionality) \(\Set(x)\land \Set(y)\land \forall z(z\in x\leftrightarrow z\in y)\rightarrow x=y\).

\item (\OrderAxiom) \(\mathbf{r}<\mathbf{s}<\mathbf{t}\rightarrow \mathbf{r}<\mathbf{t}\).

\item (\Staging) \(\exists\mathbf{s}(x\form\mathbf{s})\).

\item (\Priority) \(x\form\mathbf{s}\rightarrow \forall y\in x\ (y\avail\mathbf{s})\).

\item (\Specification)
$\forall x(\varphi(x)\rightarrow x\avail\mathbf{s})\rightarrow \exists y(\Set(y)\land y\form\mathbf{s}\land \forall x(x\in y\leftrightarrow \varphi(x)))$, where \(\varphi(x)\) is a suitable formula in $\mathcal{L}_\textbf{stage}$.

\end{enumerate}
\end{definition}
\noindent We take the first three axioms as part of the definition of urelements, sets, and stages. \UrDef says that urelements have no members; \Extensionality says that sets are determined by their members; \OrderAxiom says that the below relation is transitive. Notice that \OrderAxiom makes availability persistent: if $x\avail\mathbf{r}$ and $\mathbf{r}<\mathbf{s}$, then $x\avail\mathbf{s}$. The last three axioms are substantial but almost literal translations of the Basic Iterative Story. \Staging says that every object occurs at some stage. \Priority says that every member of a set formed at a stage must be available there. \Specification says that any objects available at a stage form a set at that stage. We will come back to the consequences of \STU in Section \ref{subsec: STU consequences}.\footnote{ Boolos \cite{Boolos1989IterationAgain} does not include any urelements. Moreover, the iterative conception he has in mind also seems to presuppose that the urelements form a set, so the urelements are excluded \textit{for simplicity}. Boolos also assumes two additional axioms about stages, which we will discuss in the next section but not include them in the bare iterative conception. \STU also differs from Button's urelemental version \cite[453--454]{Button2021LevelTheory1}, where he adopts the following axioms:

\begin{itemize}
\item [] (\SpecU)
$$\forall x(\varphi(x)\rightarrow(\Ur(x)\lor x\prec\mathbf{s}))\rightarrow \exists y(\Set(y)\land y\form\mathbf{s}\land \forall x(x\in y\leftrightarrow \varphi(x))).$$
\end{itemize}
\SpecU treats every urelement as available at every stage, which makes \UrSet an immediate consequence. Our \Specification instead permits at a stage only those urelements which are available there. Consequently, our theory permits urelements to enter the iterative process at stages above those at which other urelements enter, or at incomparable stages; see \cref{thm:stu-bizarre-model}. Finally, our \Staging requires every urelement be available at some stage. Literally speaking, this is not part of the Basic Iterative Story. But allowing urelements that are never available will complicate the discussion in some trivial manner, so we adopt \Staging to exclude them. }

\subsection{\LTU}
\STU takes the notion of stage as primitive, but we are interested in how sets are formed rather than what stages are. In other words, as Button \cite[438]{Button2021LevelTheory1} points out, our understanding of the notion of set should not depend on the notion of stage. Set theory should be autonomous enough to be able to express the very idea of the iterative conception using its own terms. Level theory, fully developed by Button \cite{Button2021LevelTheory1} based on the work of Scott \cite{Scott1974AxiomatizingSetTheory} and Potter \cite{Potter2004SetTheoryPhilosophy}, aims to overcome this conceptual defect of the stage approach. In their versions of level theory, urelements are either excluded or assumed to form a set. Here we shall develop a more general version of level theory.

In \ZF, we can naturally interpret stages as the familiar $V_\alpha$ hierarchies. With urelements, the definition of a hierarchy can be naturally generalized in \ZFU. \footnote{\ZFU, formulated in $\mathcal{L}_\Ur$,  is \ZF modified to allow urelements. The axioms of \ZFU consist of \UrDef, \Extensionality, \Pairing, \Powerset, \UnionAxiom, \InfinityAxiom, \Separation, \Foundation and \Replacement. See \cite{Yao2026AxiomatizationForcing}, where the theory is called $\ZFUR$, for the exact axiomatization. } Given a set $A$ of urelements, we define $V_\alpha(A)$ by transfinite recursion.
\begin{enumerate}
\item $V_0(A)=A$.

\item $V_{\alpha+1}(A)=\Pow(V_\alpha(A))\cup A$.

\item $V_\gamma(A)=\bigcup_{\alpha<\gamma}V_\alpha(A)$ if $\gamma$ is a limit ordinal.
\end{enumerate}
Call every set of the form $V_\alpha(A)$ a \textit{hierarchy}, and every set of urelements is said to be an \textit{initial} hierarchy. \ZFU proves that $V_{\alpha}(A)$ is a set for every $A$ and $\alpha$, and that every object is in some hierarchy.

To formulate the Basic Iterative Story in $\mathcal{L}_\Ur$, the task becomes defining hierarchies without appealing to the notion of ordinals and the tool of transfinite recursion, and this is where the notion of \textit{level} comes in. We start with some basic definitions.

\begin{definition}
$x$ is \textit{transitive} iff $\Set(x) \land \forall y\in x\ \forall z\in y\ z\in x.$
\end{definition}

\begin{definition}
For every set $x$, the \textit{kernel} of $x$
\begin{equation*}
\ker(x)=\{a\mid \Ur(a)\land \forall y(y\text{ is transitive}\land x\subseteq y\rightarrow a\in y)\},
\end{equation*}
if it exists.
\end{definition}
\noindent Intuitively, $\ker(x)$ is the set of urelements that are involved in the process of forming $x$; over a sufficient base theory, $\ker(x)$ is just the set of urelements in \textit{the transitive closure} of $x$.  We shall use the following convention to ease the notation.
\begin{definition}
$x \subseteq y$ abbreviates $\Set(x) \land \Set(y) \land \forall z \in x (z \in y)$.
\end{definition}
\begin{definition}
For every set $x$, the \textit{potentiation} of $x$ is
\begin{equation*}
\pot x=\{y\mid \exists z(y\subseteq z\in x)\}\cup\ker(x),
\end{equation*}
if it exists.
\end{definition}
\noindent That is, $\pot x$ collects every set which is a subset of some set already in $x$ together with all the urelements in $\ker(x)$.
\begin{definition}
$h$ is a \textit{history}, written $\textup{Hist}(h)$, iff
\begin{enumerate}
\item $\Set(h)$.

\item $\forall x\in h(\Set(x) \to x=\pot(x\cap h))$.

\item $\forall x\in h(\Set(x) \to x=\ker(h)\lor \ker(h)\in x)$.
\end{enumerate}
\end{definition}
\noindent The second clause says that each set in the history is obtained by potentiating its $\in$-predecessors in $h$. The third clause is the key for our purpose: the history itself must remember which urelements its set members are built over, and every set in the history is \textit{maximal} in the sense that no urelement is forgotten. A level is then obtained by potentiating a history.

\begin{definition}
$x$ is a \textit{level}, written $\Lev(x)$, iff $\exists h(\textup{Hist}(h)\land x=\pot h).$
\end{definition}
\noindent Every set of urelements is a history for the vacuous reason and hence a level. As we will see, sets in a history are levels with a fixed kernel.\footnote{In the pure setting of Scott \cite[214]{Scott1974AxiomatizingSetTheory}, Montague \cite[139]{Montague1965SetTheoryHigherOrderLogic}, and Button \cite{Button2021LevelTheory1}, there are no urelements, so every kernel is empty and potentiation reduces to $\{y\mid \exists z(y\subseteq z\in x)\}.$ The fixed-kernel clause for histories is then redundant. Button defined levels with urelements in \cite[Appendix A]{Button2021LevelTheory1} by assuming  \UrSet, which are levels in our sense whose kernel is the set of all urelements.}

\begin{definition}
\LTU (\textit{Level Theory with Urelements}) consists of the following axioms and schemes in $\mathcal{L}_\Ur$.
\begin{enumerate}
\item (\UrDef) \(\Ur(x)\rightarrow \forall y(y\notin x)\).

\item (\Extensionality) \(\Set(x)\land \Set(y)\land \forall z(z\in x\leftrightarrow z\in y)\rightarrow x=y\).

\item (\Separation) \(\Set(x)\rightarrow \exists y(\Set(y)\land \forall z(z\in y\leftrightarrow z\in x\land \varphi(z)))\), where \(\varphi\) is a suitable formula in $\mathcal{L}_\Ur$.

\item {\small (\Stratification) \(\bigl(\Set(x)\rightarrow\exists y(\Lev(y)\land x\subseteq y)\bigr)\land\bigl(\Ur(x)\rightarrow\exists y(\Lev(y)\land x\in y)\bigr)\).}
\end{enumerate}
\end{definition}

\noindent The idea is that things that are available at a level are precisely the members of the level, and a set is formed on at level whenever it is a subset of that level. So \Separation and \Stratification provide a level-theoretic formalization of the Basic Iterative Story, where levels are intended for stages.
\begin{definition}\
\begin{enumerate}
\item $x$ is \textit{potent}, written $\textup{Potent}(x)$, iff
$\Set(x)\land\forall y \forall z (y \subseteq z \in x \rightarrow y \in x).$ 

\item $x$ is \textit{supertransitive} iff $x$ is transitive and potent.
\end{enumerate}
\end{definition}
\noindent For every set $x$, $\pot(x)$ is potent if it exists. Instead of working in \LTU, in the next subsection we will work in the following weaker theory, which will be shown to be a subtheory of both \LTU and \STU.
\begin{definition}
The theory \LTUMinus consists of \UrDef, \Extensionality, \Separation, and the following weak stratification principle.

\begin{itemize}
\item [] (\StratificationMinus) $\Set(x) \rightarrow \exists y(y \text{ is supertransitive} \land x \subseteq y).$
\end{itemize}
\end{definition}

We verify that \LTU indeed proves \LTUMinus by showing that every level is supertransitive.

\begin{lemma}\label{lem:levels-potent-transitive}
Every level is supertransitive.
\end{lemma}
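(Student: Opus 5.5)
The plan is to fix a level $x$ and a history $h$ with $x=\pot h$, and check potency and transitivity straight from the definitions of $\pot$, $\ker$ and $\textup{Hist}$. Both $\pot h$ and $\ker(h)$ exist, since $x=\pot h$ exists by hypothesis. No earlier result beyond the definitions is needed.

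\emph{Potency.} Suppose $y\subseteq z\in\pot h$. By our convention, $z$ is a set, so $z$ is not one of the urelements in $\ker(h)$. Hence $z\subseteq w$ for some $w\in h$. Then $y\subseteq w\in h$, since $\subseteq$ is transitive and $y$ is a set. So $y\in\pot h$. This is just the general remark that $\pot$ of anything is potent.

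\emph{Transitivity.} Let $y\in\pot h$ and $z\in y$. By \UrDef, $y$ is a set, so $y\subseteq w$ for some $w\in h$. Since $\subseteq$ requires both sides to be sets, $w$ is a set. Clause (2) of $\textup{Hist}$ then gives $w=\pot(w\cap h)$, and we have $z\in w$. We split on the type of $z$.
\begin{enumerate}
\item If $z$ is a set, then $z\subseteq v$ for some $v\in w\cap h$. Since $v\in h$, we get $z\in\pot h$.
\item If $z$ is a urelement, then $z\in\ker(w\cap h)$. We must show $z\in\ker(h)$. Let $t$ be any transitive set with $h\subseteq t$. Then $w\cap h\subseteq t$ as well, so the definition of $\ker(w\cap h)$ gives $z\in t$. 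Hence $z\in\ker(h)\subseteq\pot h$.
\end{enumerate}

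The only step needing any care is the urelement case, that is, the monotonicity $\ker(w\cap h)\subseteq\ker(h)$. As shown, it follows directly from the "intersection of transitive supersets" form of the definition of $\ker$. Clause (3) of $\textup{Hist}$ is not needed for this lemma. I expect it to matter only later, when levels are compared with one another.
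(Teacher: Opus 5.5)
Your proof is correct and follows the paper's argument. Potency is immediate from the form of $\pot h$, and transitivity comes from writing a set member $y\in\pot h$ as $y\subseteq w\in h$ and using clause (2) to get $y\subseteq w=\pot(w\cap h)\subseteq\pot h$; you verify the kernel monotonicity $\ker(w\cap h)\subseteq\ker(h)$ explicitly, whereas the paper leaves it implicit in the inclusion $\pot(w\cap h)\subseteq\pot(h)$.
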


\begin{proof}
Let $s$ be a level, so $s=\pot(h)$ for some history $h$. Since $h$ is a set, $s$ is potent. To see that $s$ is transitive, let $y$ be a set in $s$.Then there is some set $x \in h$ such that $y \subseteq x$. $x=\pot(x \cap h)$ and so we have $y \subseteq \pot(x \cap h) \subseteq \pot(h)=s.$ Thus $s$ is transitive.
\end{proof}
\subsection{Consequences of \texorpdfstring{\LTUMinus}{\LTU-}}

Now we establish some basic facts about level theory in \LTUMinus. The arguments in Lemma 18 - 24 are analogous to those given in Button \cite{Button2021LevelTheory1}. We start with the existence of kernels and potentiations, followed by some basic facts about kernels which will be used repeatedly.
\begin{prop}[\LTUMinus]\label{prop:kernel-pot-existence}
For every set $x$, $\ker(x)$ and $\pot(x)$ are sets.
\end{prop}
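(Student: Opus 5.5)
Fix a set $x$. By \StratificationMinus there is a supertransitive $y$ with $x\subseteq y$. Everything will be carved out of $y$ by \Separation. The obstacle is that $\ker(x)$ is defined by a universal quantification over \emph{all} transitive supersets of $x$. Likewise, $\pot(x)$ ranges over all subsets of members of $x$. So the plan is to show that each of these ranges can be bounded inside $y$.

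For the kernel, we define by \Separation
\[
k=\{a\in y\mid \Ur(a)\land \forall z(z\text{ is transitive}\land x\subseteq z\rightarrow a\in z)\}.
\]
We must check that every urelement $a$ satisfying the defining condition of $\ker(x)$ already lies in $y$. This holds because $y$ itself is transitive and $x\subseteq y$, so $y$ is one of the $z$'s quantified over, and hence $a\in y$. Thus $k$ has exactly the members required of $\ker(x)$. By \Extensionality, $k$ is the unique such set.

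For the potentiation, we first show that $\{w\mid \exists z(w\subseteq z\in x)\}$ is bounded by $y$. Suppose $w\subseteq z\in x$. Then $z\in y$ since $x\subseteq y$, and $y$ is potent, so $w\in y$. Next, $\ker(x)\subseteq y$ by the previous paragraph. Applying \Separation to $y$ with the formula
\[
\exists z(v\subseteq z\in x)\lor v\in \ker(x),
\]
we obtain a set whose members are exactly those of $\pot x$. The second disjunct can be used as a parameter, since $\ker(x)$ has just been shown to exist, or it can be unfolded into the defining formula. Note that the convention $v\subseteq z$ requires $\Set(v)$, so urelements enter only through $\ker(x)$, as intended.

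The only real subtlety is the kernel step: we need to see that the universally quantified definition is witnessed inside $y$. This works precisely because $y$ is transitive, which is the transitivity half of supertransitivity. The potency half is what handles $\pot x$. No step uses more than \Separation, \Extensionality, and \StratificationMinus.
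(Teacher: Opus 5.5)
Your proof is correct and is essentially the paper's argument. Both use \StratificationMinus to get a supertransitive superset of $x$, use its transitivity to bound $\ker(x)$ and its potency to bound the set members of the potentiation, and then obtain both sets by \Separation on that superset.
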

\begin{proof}
By \StratificationMinus, there is a supertransitive set $t$ such that $x\subseteq t$. Then $\ker(x)$ is contained in $t$ and hence a set by \Separation on $t$. If $y\subseteq z\in x$ for some $z$, then $z\in t$ so the potency of $t$ gives $y\in t$. \Separation on $t$ gives $\pot(x)$ as a set.
\end{proof}
\begin{prop}[\LTUMinus]\label{prop:kernelbasics}
For all sets $x$ and $y$:
\begin{enumerate}
\item If $x \subseteq y$, then $\ker(x) \subseteq \ker(y)$.

\item If $x \in y$, then $\ker(x) \subseteq \ker(y)$.

\item $\{a \in x \mid \Ur(a)\} \subseteq \ker(x)$.

\item For every urelement $a$,
$$a \in \ker(x) \leftrightarrow a \in x \lor \exists y \in x(\Set(y)\land a \in \ker(y)).$$
\end{enumerate}
\end{prop}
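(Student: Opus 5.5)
The whole argument works directly from the definition: $\ker(x)$ is the set of urelements lying in every transitive superset of $x$. By Proposition~\ref{prop:kernel-pot-existence}, $\ker(x)$ and $\ker(y)$ exist as sets. For each clause I compare the families of transitive supersets involved. The only place that needs more than unfolding definitions is the right-to-left half of (4), where a transitive set has to be built by \Separation.

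For (1), assume $x\subseteq y$. Every transitive $t$ with $y\subseteq t$ also satisfies $x\subseteq t$. So every urelement in all transitive supersets of $x$ lies in all transitive supersets of $y$, which gives $\ker(x)\subseteq\ker(y)$.

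For (2), assume $x\in y$ and let $t$ be transitive with $y\subseteq t$. Then $x\in t$, and since $x$ is a set, transitivity of $t$ gives $x\subseteq t$. Hence any $a\in\ker(x)$ lies in $t$, and so $a\in\ker(y)$.

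For (3), any transitive $t\supseteq x$ contains every member of $x$, in particular every urelement member.

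For the left-to-right direction of (4), I argue contrapositively. Suppose $a$ is a urelement with $a\notin x$ and $a\notin\ker(z)$ for every set $z\in x$. I want a transitive $t\supseteq x$ with $a\notin t$. By \StratificationMinus, pick a supertransitive $s$ with $x\subseteq s$. Then set
\[
t=\{w\in s \mid w\neq a \land \forall v(\Set(v)\land w\in v\in s\rightarrow \text{???})\},
\]
but a cleaner choice is
\[
t=\{w\in s\mid w\neq a\land(\Set(w)\rightarrow a\notin\ker(w))\},
\]
which is a set by \Separation on $s$.

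Three checks remain.
\begin{itemize}
\item $x\subseteq t$: every member of $x$ lies in $s$; urelement members differ from $a$ because $a\notin x$; set members $z$ have $a\notin\ker(z)$ by hypothesis.
\item $t$ is transitive: let $w\in t$ be a set and $u\in w$. Then $u\in s$ since $s$ is transitive. If $u$ is a urelement, then $u\in\ker(w)$ by (3), so $u\neq a$. If $u$ is a set, then $\ker(u)\subseteq\ker(w)$ by (2), so $a\notin\ker(u)$. Either way $u\in t$.
\item $a\notin t$: immediate from the defining condition.
\end{itemize}
Hence $a\notin\ker(x)$.

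For the right-to-left direction of (4), if $a\in x$ use (3). If $a\in\ker(y)$ for some set $y\in x$, use (2).

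I expect the main obstacle to be the left-to-right direction of (4), that is, showing the separated set $t$ is genuinely transitive. This is where (2) and (3) do the work. Potency of $s$ is not needed; only its transitivity is used.
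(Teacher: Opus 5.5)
Your proof is correct and matches the paper's argument: the same unfolding of the definition for (1)--(3), and for the left-to-right half of (4) the same separated set $\{w\in s\mid w\neq a\land(\Set(w)\rightarrow a\notin\ker(w))\}$ inside a supertransitive superset of $x$, checked for transitivity using (2) and (3). Two cosmetic fixes remain: your opening paragraph wrongly says the construction is needed for the right-to-left half (your body correctly uses it for the left-to-right half), and the abandoned ``???'' candidate for $t$ should be deleted.
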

\begin{proof}
(1) Let $a \in \ker(x)$, and let $z$ be a transitive set with $y \subseteq z$. We have $x \subseteq z$, and so $a \in z$. Thus $a \in \ker(y)$.

(2) Let $a \in \ker(x)$, and let $z$ be a transitive set with $y \subseteq z$. We have $x \in z$; so $x \subseteq z$ and hence $a \in z$. Thus $a \in \ker(y)$.

(3) Let $a \in x$ be a urelement, and let $z$ be a transitive set with $x \subseteq z$. Then $a \in z$. Thus $a \in \ker(x)$.

(4) The right-to-left direction follows from (2) and (3). For the converse, suppose that $a \in \ker(x)$, $a \notin x$, and there is no set $y \in x$ such that $a \in \ker(y)$. By \StratificationMinus, fix a supertransitive set $z$ such that $x \subseteq z$. By \Separation, let
$$z^*=\{w \in z \mid w \neq a \land (\Set(w) \rightarrow a \notin \ker(w))\}.$$
We claim that $z^*$ is transitive. Suppose $u \in w \in z^*$. Since $z$ is transitive, $u \in z$. Also $u \neq a$, since otherwise $a \in w$, and hence $a \in \ker(w)$ by (3). If $\Set(u)$, then $\ker(u) \subseteq \ker(w)$ by (2), and therefore $a \notin \ker(u)$. Thus $u \in z^*$.

Moreover, $x \subseteq z^*$: if $w \in x$, then $w \neq a$ by assumption, and if $\Set(w)$, then $a \notin \ker(w)$ by assumption. So $z^*$ is a transitive set with $x \subseteq z^*$ and $a \notin z^*$, contradicting $a \in \ker(x)$.
\end{proof}

\begin{lemma}[\LTUMinus]\label{lem:pot-kernel}
If $x=\pot(y)$, then $\ker(x)=\ker(y)$.
\end{lemma}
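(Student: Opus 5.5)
We prove the two inclusions separately, relying mainly on the characterization of kernels in \cref{prop:kernelbasics}(4). Throughout, $y$ is a set, so $\pot(y)$ and $\ker(y)$ exist by \cref{prop:kernel-pot-existence}, and $x=\pot(y)$ is a set.

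For $\ker(y)\subseteq\ker(x)$, take an urelement $a\in\ker(y)$. By construction $\ker(y)\subseteq\pot(y)=x$, so $a\in x$. Then \cref{prop:kernelbasics}(3) gives $a\in\ker(x)$. In fact this direction only needs the observation that the urelement part of $\pot(y)$ is exactly $\ker(y)$.

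For $\ker(x)\subseteq\ker(y)$, take an urelement $a\in\ker(x)$ and apply \cref{prop:kernelbasics}(4) to $x$. There are two cases.
\begin{enumerate}
\item If $a\in x$, then $a$ is an urelement, so it does not lie in the set part $\{w\mid\exists z(w\subseteq z\in y)\}$, since $\subseteq$ requires $\Set(w)$. Hence $a\in\ker(y)$.
\item Otherwise there is a set $w\in x$ with $a\in\ker(w)$. Since $w$ is a set in $\pot(y)$, we have $w\subseteq z$ for some $z\in y$. Here $z$ must be a set, because the abbreviation $w\subseteq z$ includes $\Set(z)$. By \cref{prop:kernelbasics}(1), $\ker(w)\subseteq\ker(z)$, so $a\in\ker(z)$. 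By \cref{prop:kernelbasics}(2), $z\in y$ gives $\ker(z)\subseteq\ker(y)$, hence $a\in\ker(y)$.
\end{enumerate}

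Both sides consist only of urelements by definition, so the two inclusions give $\ker(x)=\ker(y)$.

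The only delicate point is applying \cref{prop:kernelbasics}(4) correctly, since its right-hand side quantifies over set members of $x$. We must check that each such member really sits below some set member of $y$. This depends on the convention that $\subseteq$ forces both sides to be sets, so urelements of $y$ contribute nothing to the set part of $\pot(y)$ beyond what $\ker(y)$ already records. Beyond this, everything is a direct chaining of the earlier kernel facts, and no further use of \StratificationMinus is needed beyond what those facts already encapsulate.
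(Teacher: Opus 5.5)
Your proof is correct and matches the paper's argument; the inclusion $\ker(x)\subseteq\ker(y)$ via \cref{prop:kernelbasics}(4) is identical. The only difference is in $\ker(y)\subseteq\ker(x)$: the paper first shows $y\subseteq x$ and applies monotonicity (1), whereas you go straight through $\ker(y)\subseteq\pot(y)=x$ and part (3), which is slightly shorter and equally valid.
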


\begin{proof}
First, $y\subseteq x$. For if $w\in y$ and $\Set(w)$, then $w\subseteq w\in y$, so $w\in \pot(y)=x$; and if $w$ is a urelement, then $w\in \ker(y)\subseteq \pot(y)=x$. Hence $\ker(y)\subseteq \ker(x)$. Conversely, let $a\in\ker(x)$. By \cref{prop:kernelbasics}(4), either $a\in x$, or $a\in\ker(z)$ for some set $z\in x$. In the first case, we have $a\in\ker(y)$. In the second case, fix such a set $z$ and some $w\in y$ with $z\subseteq w$. So $\ker(z)  \subseteq \ker(w) \subseteq \ker(y)$ by \cref{prop:kernelbasics} and hence $a \in \ker(y)$.\end{proof}

The following is a schematic version of Button \cite[Lemma 3.5]{Button2021LevelTheory1}, which is due to Scott, and the proof is included for completeness (which only uses \Separation and \Extensionality). We will refer to this lemma as \textit{Potent Induction}: every non-empty class of potent sets has an $\in$-minimal element.
\begin{lemma}[\LTUMinus]\label{lem:potent-minimality}
For every formula $\varphi(x)$, the following scheme holds:
$$\bigl(\exists x\,\varphi(x)\land\forall x(\varphi(x)\rightarrow\textup{Potent}(x))\bigr)\rightarrow\exists w\bigl(\varphi(w)\land\forall x(\varphi(x)\rightarrow x\notin w)\bigr).$$
\end{lemma}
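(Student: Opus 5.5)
The plan is to argue by contradiction, using a Russell-style diagonal set cut out by \Separation. The potency hypothesis will push that set into every $\varphi$-set. Assume $\exists x\,\varphi(x)$ and that every $\varphi$-set is potent, and suppose towards a contradiction that no $\varphi$-set is $\in$-minimal among the $\varphi$-sets. That is, every $x$ with $\varphi(x)$ has some member $u \in x$ with $\varphi(u)$. Fix some $a$ with $\varphi(a)$. Since $a$ is potent, it is a set, so \Separation applies to it.

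By \Separation on $a$, form
\[
c=\{x\in a\mid x\notin x\land\forall v(\varphi(v)\rightarrow x\in v)\}.
\]
Here $\varphi$ may contain parameters, and these are allowed in the \Separation scheme. The key claim is that $c\in v$ for every $v$ with $\varphi(v)$. To see this, fix such a $v$ and use the contradiction hypothesis to pick $u\in v$ with $\varphi(u)$. Every element of $c$ lies in every $\varphi$-set, in particular in $u$, so $c\subseteq u$. Both are sets, since $c$ is obtained by \Separation and $u$ is potent. Since $u\in v$ and $v$ is potent, we get $c\in v$.

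Applying the claim with $v=a$ gives $c\in a$, and the claim also shows that $c$ belongs to every $\varphi$-set. Hence $c\in c$ holds if and only if $c\notin c$, which is a contradiction. So some $w$ with $\varphi(w)$ has no $\varphi$-member, which is exactly the conclusion of \cref{lem:potent-minimality}.

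The main obstacle is choosing $c$ correctly. The naive choice $\{x\in a\mid\forall v(\varphi(v)\rightarrow x\in v)\}$ also lands in every $\varphi$-set, but without \Foundation this yields only $c\in c$, which is not contradictory. Adding the conjunct $x\notin x$ turns the argument into a Russell paradox. The proof uses only \Separation, \Extensionality (to speak of the set $c$) and potency; no stratification is needed.
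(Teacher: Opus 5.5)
Your proof is correct and is essentially the paper's argument. Both use the same Russell-style set: the members of the intersection of all $\varphi$-sets that are not self-membered, which potency pushes into the $\varphi$-sets.

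The only difference is packaging. The paper argues directly: this set is not in the intersection, so some $\varphi$-set $w$ omits it, and potency then forces $w$ to have no $\varphi$-member. You run the contrapositive by contradiction and fold the paper's two \Separation steps into one.
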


\begin{proof}
Suppose the antecedent holds, and fix $w_0$ such that $\varphi(w_0)$. Let
$$y=\{x\in w_0\mid\forall w(\varphi(w)\rightarrow x\in w)\},\qquad z=\{x\in y\mid x\notin x\}.$$
Since $\varphi(w_0)$, $y = \{x \mid\forall w(\varphi(w)\rightarrow x\in w)\}$. Moreover, $z\notin y$ by Russell's Paradox. Hence there is some $w$ such that $\varphi(w)$ and $z\notin w$. If $\varphi(x)$, then $z\subseteq y\subseteq x$. Since $w$ is potent, $x\in w$ would imply $z\in w$, a contradiction. Thus no $\varphi$-object belongs to $w$, as required.
\end{proof}

We proceed to prove some expected properties of histories and levels: every history is an initial sequence of levels with a fixed kernel. This is proved in Button \cite{Button2021LevelTheory1} for the pure-set case.
\begin{lemma}[\LTUMinus]\label{lem:history-members-transitive}
Every set in a history is transitive.
\end{lemma}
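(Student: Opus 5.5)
We argue by Potent Induction (\cref{lem:potent-minimality}). Fix a history $h$ and suppose, for contradiction, that some set in $h$ is not transitive. Consider
$$\varphi(x)\ :\equiv\ x\in h\land\Set(x)\land x \text{ is not transitive}.$$
Every set in $h$ has the form $\pot(x\cap h)$, and potentiations are potent, so every $\varphi$-object is potent. By \cref{lem:potent-minimality} we obtain a $\varphi$-object $w$ such that no $\varphi$-object belongs to $w$. Consequently every set in $w\cap h$ is transitive.

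Next we show that $w=\pot(w\cap h)$ is transitive, which gives the contradiction. Let $u\in w$. If $u$ is a urelement there is nothing to check. If $u$ is a set, then by the definition of potentiation $u\subseteq z$ for some $z\in w\cap h$. Let $v\in u$, so that $v\in z$.

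If $v$ is a urelement, then $v\in\ker(z)$ by \cref{prop:kernelbasics}(3). Since $z\in w\cap h$, \cref{prop:kernelbasics}(2) gives $\ker(z)\subseteq\ker(w\cap h)$, and $\ker(w\cap h)\subseteq\pot(w\cap h)=w$. Hence $v\in w$.

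If $v$ is a set, then $z$ is a set in $h$ lying in $w$, so $z$ is transitive by minimality, and $z=\pot(z\cap h)$. From $v\in z$ we get $v\subseteq z'$ for some $z'\in z\cap h$. Transitivity of $z$ gives $z'\in z$; combined with $z\in w$, $z\in h$ and $z'\in h$, this yields $z'\in w\cap h$. Then $v\subseteq z'\in w\cap h$, so $v\in\pot(w\cap h)=w$.

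The main delicacy is the urelement case. Potentiation only adds urelements from the kernel, so we must check that urelements deep inside members of $w$ land in $\ker(w\cap h)$; the kernel monotonicity facts of \cref{prop:kernelbasics} handle this. The rest is Button's pure argument. Note that the fixed-kernel clause (3) of the definition of history is not needed here.
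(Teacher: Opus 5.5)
Your overall strategy matches the paper's proof. You apply Potent Induction to the non-transitive sets of $h$, take an $\in$-minimal one $w$, and show that $w=\pot(w\cap h)$ is transitive after all. Your urelement case, via \cref{prop:kernelbasics}(2)--(3), is correct, and you are right that the fixed-kernel clause of the definition of history plays no role.

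However, the set case as written contains a circular step. You pass from $z'\in z$ and $z\in w$ to $z'\in w$. That inference is exactly the transitivity of $w$, which is what you are trying to establish. The sentence ``transitivity of $z$ gives $z'\in z$'' is also vacuous, since $z'\in z\cap h$ by choice.

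The repair is immediate and removes the detour through $z=\pot(z\cap h)$ and $z'$ altogether. Since $v$ is a set, $v\in z$, and $z$ is transitive by minimality, you have $v\subseteq z$. As $z\in w\cap h$, this gives $v\in\pot(w\cap h)=w$ directly. If you prefer to keep $z'$, what transitivity of $z$ actually yields is $z'\subseteq z\in w\cap h$, whence $z'\in\pot(w\cap h)=w$ and then $v\subseteq z'\in w\cap h$. With this fix your argument coincides with the paper's proof.
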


\begin{proof}
Fix a history $h$ and suppose that some set in $h$ is not transitive. Every set $x\in h$ satisfies $x=\pot(x\cap h)$ and is therefore potent. By Potent Induction, fix an $\in$-minimal non-transitive set $x\in h$. Every set in $x\cap h$ is therefore transitive.

Let $y \in z \in x$. By \UrDef, $z$ is a set. Since $x=\pot(x \cap h)$, there is some $w \in x \cap h$ such that $z \subseteq w$. By hypothesis, $w$ is transitive, so $y \in w$. If $\Set(y)$, then $y \subseteq w$, so $y \in \pot(x \cap h)=x$. If $y=a$ is a urelement, then $a \in \ker(w) \subseteq \ker(x \cap h)$, so again $a \in \pot(x \cap h)=x$. Thus $x$ is transitive, contradicting the definition of $x$.
\end{proof}

\begin{lemma}[\LTUMinus]\label{lem:history-kernel}
For every history $h$, either $h=\ker(h)$ or $\ker(h)\in h$. Hence, for every level $s$, either $s = \ker(s)$ or $\ker(s) \in s$.
\end{lemma}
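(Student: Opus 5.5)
The plan is to handle histories first, using Potent Induction to find an $\in$-minimal set member of $h$, and then transfer the result to levels via \cref{lem:pot-kernel}. Fix a history $h$; by \cref{prop:kernel-pot-existence}, $\ker(h)$ exists.

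\textbf{Case 1: $h$ has no set members.} Then $h$ consists only of urelements. By \cref{prop:kernelbasics}(4), an urelement $a$ lies in $\ker(h)$ iff $a\in h$ or $a\in\ker(y)$ for some set $y\in h$. The second disjunct never applies, so the urelements of $\ker(h)$ are exactly the members of $h$. Since $\ker(h)$ contains only urelements, $\ker(h)=h$ by \Extensionality.

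\textbf{Case 2: $h$ has a set member.} Every set $x\in h$ satisfies $x=\pot(x\cap h)$, so it is potent. Hence Potent Induction (\cref{lem:potent-minimality}), applied to the formula ``$x\in h\land\Set(x)$'', yields a set $x\in h$ such that no set in $h$ belongs to $x$. By clause (3) of the definition of a history, either $x=\ker(h)$ or $\ker(h)\in x$.
\begin{itemize}
\item If $x=\ker(h)$, then $\ker(h)\in h$ and we are done.
\item Suppose instead that $\ker(h)\in x=\pot(x\cap h)$. Since $\ker(h)$ is a set, it is not in $\ker(x\cap h)$, which contains only urelements. So there is some $w\in x\cap h$ with $\ker(h)\subseteq w$. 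By our convention, $\subseteq$ requires $\Set(w)$, and this holds even when $\ker(h)=\emptyset$. Thus $w$ is a set in $h$ that belongs to $x$, contradicting the minimality of $x$.
\end{itemize}
The main point to watch is this last step: we must ensure that the witness $w$ is a set, so that minimality applies, and rule out the urelement part of the potentiation. Both follow from the definitions as noted.

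\textbf{Levels.} Let $s=\pot(h)$ for a history $h$. By \cref{lem:pot-kernel}, $\ker(s)=\ker(h)$.
\begin{itemize}
\item If $h=\ker(h)$, then $h$ has no set members, so no $y$ satisfies $y\subseteq z\in h$. Hence $s=\pot(h)=\ker(h)=\ker(s)$.
\item If $\ker(h)\in h$, then $\ker(h)\subseteq\ker(h)\in h$ gives $\ker(h)\in\pot(h)=s$, that is, $\ker(s)\in s$.
\end{itemize}
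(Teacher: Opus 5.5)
Your proof is correct and follows essentially the same route as the paper. In both, Potent Induction gives an $\in$-minimal set member $x$ of $h$, and the case $\ker(h)\in x=\pot(x\cap h)$ is refuted by a set $w\in x\cap h$ with $\ker(h)\subseteq w$. The paper phrases this as a contradiction from assuming $\ker(h)\notin h$, while you case-split on whether $h$ has set members. You also fill in two small steps the paper leaves implicit: the use of \cref{prop:kernelbasics}(4) to get $h=\ker(h)$ when $h$ has only urelements, and $\ker(h)\subseteq\ker(h)\in h$ to get $\ker(s)\in s$.
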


\begin{proof}
Let $A=\ker(h)$, and suppose $A\notin h$. Then $h$ has no set member. Otherwise, every set in $h$ is potent, so by Potent Induction, there is an $\in$-minimal set $x\in h$. Since $A\notin h$, $x\neq A$, and hence $A\in x$. But $x=\pot(x\cap h)$; since $A$ is a set, there is a set $y\in x\cap h$ such that $A\subseteq y$, contradicting the minimality of $x$. Thus $h$ is a set of urelements, and so $h=\ker(h)=A$.

For the consequence, let $s=\pot(h)$ be a level. By \cref{lem:pot-kernel}, $\ker(s)=\ker(h)$. By the first part, either $h= \ker(h)$ or $\ker(h) \in h$. In the first case, $s =\pot(h)= h = \ker(s)$ since $h$ has no set members. In the second case, $ker (s) = \ker(h) \in h \subseteq s$, and therefore $\ker(s)\in s$.
\end{proof}
\begin{lemma}[\LTUMinus]\label{lem:history-members-kernel}
Let $h$ be a history. For every set $x\in h$, $\ker(x)=\ker(h)$.
\end{lemma}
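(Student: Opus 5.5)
We need to prove that for a history $h$ and a set $x\in h$, $\ker(x)=\ker(h)$. The two inclusions are handled separately. The inclusion $\ker(x)\subseteq\ker(h)$ is immediate from \cref{prop:kernelbasics}(2), since $x\in h$. So the work is in showing $\ker(h)\subseteq\ker(x)$, and for this the third clause in the definition of a history is the key tool.

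Fix a set $x\in h$ and let $A=\ker(h)$. By the third clause of $\textup{Hist}(h)$, either $x=A$ or $A\in x$.
\begin{itemize}
\item If $x=A$, then $x$ is a set of urelements. Every urelement of $x$ lies in $\ker(x)$ by \cref{prop:kernelbasics}(3), so $A=x\subseteq\ker(x)$.
\item If $A\in x$, then again every member of $A$ is a urelement. Let $a\in A$. Any transitive $y$ with $x\subseteq y$ contains $A$, and then by transitivity contains $a$. Hence $a\in\ker(x)$.
\end{itemize}
In the second case one can equally apply \cref{prop:kernelbasics}(2) to $A\in x$ to get $\ker(A)\subseteq\ker(x)$, together with $\ker(A)\supseteq A$ from \cref{prop:kernelbasics}(3). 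In both cases $\ker(h)\subseteq\ker(x)$, which combined with the first inclusion gives equality.

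I don't expect a real obstacle; it is a direct unpacking of the definitions. The only point requiring care is that $A=\ker(h)$ is a set, so that the clause ``$\ker(h)\in x$'' makes sense. This holds by \cref{prop:kernel-pot-existence}, and $A$ consists only of urelements by the definition of the kernel. No appeal to Potent Induction or to \cref{lem:history-members-transitive} is needed.
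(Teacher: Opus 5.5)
Your proof is correct and follows essentially the same route as the paper. The paper also gets $\ker(x)\subseteq\ker(h)$ from \cref{prop:kernelbasics}(2) and obtains the reverse inclusion by splitting on the third history clause. Its case $\ker(h)\in x$ is written as $\ker(h)=\ker(\ker(h))\subseteq\ker(x)$, which is exactly your alternative argument via parts (2) and (3).
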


\begin{proof}
Let $x\in h$ be a set. So we have $\ker(x)\subseteq\ker(h)$. If $x=\ker(h)$, $\ker(h) \subseteq x = \ker(x)$. If $\ker(h)\in x$, then $\ker(h) = \ker (\ker(h)) \subseteq \ker(x)$. Thus, $\ker(x) = \ker(h)$.
\end{proof}

\begin{lemma}[\LTUMinus]\label{lem:history-members-levels}
Every set in a history is a level.
\end{lemma}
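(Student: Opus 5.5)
Let $h$ be a history and $x\in h$ a set. By the definition of level, it suffices to find a history $k$ with $x=\pot(k)$. The natural candidate is $k=x\cap h$, since $x=\pot(x\cap h)$ by the second clause of the definition of history. This set exists by \Separation. So the whole task is to verify that $x\cap h$ is itself a history. Clause (1) is immediate.

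\textbf{Clause (2).} Let $y\in x\cap h$ be a set. Since $y\in h$, we have $y=\pot(y\cap h)$. We want $y=\pot(y\cap(x\cap h))$, so it suffices to show $y\cap h\subseteq x$. By \cref{lem:history-members-transitive}, $x$ is transitive, so $y\in x$ gives $y\subseteq x$. Hence $y\cap h=y\cap x\cap h$, and clause (2) follows.

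\textbf{Clause (3).} This is the part that needs the kernel lemmas. We must show that every set $y\in x\cap h$ satisfies $y=\ker(x\cap h)$ or $\ker(x\cap h)\in y$.
\begin{enumerate}
\item First compute the kernel. By \cref{lem:pot-kernel} applied to $x=\pot(x\cap h)$, $\ker(x\cap h)=\ker(x)$.
\item By \cref{lem:history-members-kernel}, $\ker(x)=\ker(h)$. Hence $\ker(x\cap h)=\ker(h)$.
\item Now let $y\in x\cap h$ be a set. Since $y\in h$, clause (3) for $h$ gives $y=\ker(h)$ or $\ker(h)\in y$. Rewriting $\ker(h)$ as $\ker(x\cap h)$ gives exactly clause (3) for $x\cap h$.
\end{enumerate}

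Thus $x\cap h$ is a history and $x=\pot(x\cap h)$, so $x$ is a level. The only potential obstacle is making sure the kernel of the truncated history does not shrink. This is handled by steps 1 and 2 above: \cref{lem:pot-kernel} lets us pass through $x$ itself, and \cref{lem:history-members-kernel} identifies $\ker(x)$ with $\ker(h)$. The rest is routine.
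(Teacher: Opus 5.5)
Your proposal is correct and follows the paper's proof essentially step for step. Like the paper, you show that $x\cap h$ is a history: the potentiation clause comes from the transitivity of $x$ (\cref{lem:history-members-transitive}), and the kernel clause comes from identifying $\ker(x\cap h)=\ker(x)=\ker(h)$ via \cref{lem:pot-kernel,lem:history-members-kernel}.
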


\begin{proof}
Let $h$ be a history and $x\in h$ be a set. Since $x=\pot(x\cap h)$, it is enough to show that $x\cap h$ is a history.

Let $y\in x\cap h$ be a set. Then $y=\pot(y\cap h)$. Since $x$ is transitive by \cref{lem:history-members-transitive}, $y\subseteq x$, and hence $y\cap h=y\cap(x\cap h)$. Thus $y=\pot(y\cap(x\cap h)).$

To check the kernel clause, let $y \in x \cap h$ be a set. Since $h$ is a history, $y=\ker(h)$ or $\ker(h)\in y$. By \cref{lem:pot-kernel}, $\ker(x)=\ker(x\cap h)$, and by \cref{lem:history-members-kernel}, $\ker(x)=\ker(h)$. So $y=\ker(x \cap h)$ or $\ker(x \cap h)\in y$. Therefore, $x \cap h$ is indeed a history.
\end{proof}
\noindent The next lemma says that every level can be decomposed into lower levels with the same kernel.
\begin{lemma}[\LTUMinus]\label{lem:level-decomposition}
For every level $s$, let
$$L_s=\{r\in s\mid\textup{Lev}(r)\land\ker(r)=\ker(s)\}.$$
Then $s=\ker(s)\cup\pot(L_s).$
\end{lemma}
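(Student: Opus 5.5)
The plan is to prove the two inclusions directly, working from a history witnessing that $s$ is a level. Fix a history $h$ with $s=\pot(h)$. By \cref{lem:pot-kernel}, $\ker(s)=\ker(h)$.

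The first and key step is to show that every set member of $h$ lies in $L_s$. Let $x\in h$ be a set.
\begin{enumerate}
\item Since $x\subseteq x\in h$, we have $x\in\pot(h)=s$.
\item By \cref{lem:history-members-levels}, $x$ is a level.
\item By \cref{lem:history-members-kernel}, $\ker(x)=\ker(h)=\ker(s)$.
\end{enumerate}
Hence $x\in L_s$. This is where the earlier lemmas on histories do all the work, and it is really the only nontrivial point. The rest is bookkeeping with the definition of $\pot$.

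For $s\subseteq\ker(s)\cup\pot(L_s)$, let $y\in s=\pot(h)$.
\begin{itemize}
\item If $y$ is a urelement, then $y\in\ker(h)=\ker(s)$.
\item If $y$ is a set, then $y\subseteq z\in h$ for some $z$. By our convention on $\subseteq$, $z$ is a set. So $z\in L_s$ by the first step, and therefore $y\in\pot(L_s)$.
\end{itemize}

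For the reverse inclusion, first note that $\ker(s)=\ker(h)\subseteq\pot(h)=s$. Next let $y\in\pot(L_s)$.
\begin{itemize}
\item If $y$ is a set, then $y\subseteq r$ for some $r\in L_s\subseteq s$. Since $s$ is potent (\cref{lem:levels-potent-transitive}), $y\in s$.
\item If $y$ is a urelement, then $y\in\ker(L_s)$. Since $L_s\subseteq s$, \cref{prop:kernelbasics}(1) gives $\ker(L_s)\subseteq\ker(s)\subseteq s$.
\end{itemize}
The degenerate case $s=\ker(s)$, where $h$ has no set members, needs no separate treatment. There $L_s$ has no members, so $\pot(L_s)=\emptyset$ and the identity holds trivially.

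I expect no real obstacle. The only care needed is to invoke \cref{lem:history-members-levels} and \cref{lem:history-members-kernel} to place the set members of $h$ inside $L_s$.
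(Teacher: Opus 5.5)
Your proof is correct and follows essentially the same route as the paper. The paper also checks both inclusions directly from the definition of $\pot$, and its only substantive step is likewise that every set in a witnessing history $h$ lies in $L_s$, via \cref{lem:history-members-levels}, \cref{lem:history-members-kernel} and \cref{lem:pot-kernel}. The remaining differences, such as getting $\ker(s)\subseteq s$ from $\ker(h)\subseteq\pot(h)$ rather than from the transitivity of $s$, are cosmetic.
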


\begin{proof}
Let $x\in\ker(s)\cup\pot(L_s)$. If $x\in\ker(s)$, then $x\in s$ since $s$ is transitive. Suppose $x\in\pot(L_s)$. If $x$ is a set, then $x\subseteq r\in L_s\subseteq s$ for some $r$, and hence $x\in s$ by the potency of $s$. If $x$ is a urelement, then $x\in\ker(L_s)\subseteq\ker(s)\subseteq s$.

Conversely, let $x\in s$. If $x$ is a urelement, then $x\in\ker(s)$. Suppose $x$ is a set, and fix a history $h$ such that $s=\pot(h)$. There is a set $r \in h \subseteq s$ such that $x\subseteq r$. By \cref{lem:history-members-levels}, $r$ is a level, and by \cref{lem:history-members-kernel,lem:pot-kernel}, $\ker(r)=\ker(h)=\ker(s)$. Thus $r\in L_s$, and hence $x\in\pot(L_s)$.
\end{proof}

Button \cite[Lemma 3.9]{Button2021LevelTheory1} proves that when there are no urelements, levels are linearly ordered, and hence well-ordered, by the membership relation. This is, of course, not true in \LTU since any two different sets of urelements are incomparable levels. But using Button's argument, we can show that levels with the same kernel are indeed linearly ordered.
\begin{lemma}[\LTUMinus]\label{lem:same-kernel-levels-comparable}
If $s$ and $t$ are levels with the same kernel, then $s\in t$, $s=t$, or $t\in s$. Consequently, levels with the same kernel are well-ordered by $\in$.
\end{lemma}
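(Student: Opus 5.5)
We adapt Button's proof of linearity for pure levels, using Potent Induction (\cref{lem:potent-minimality}) together with the decomposition of \cref{lem:level-decomposition}. Fix a kernel $A$ and suppose the claim fails. Since levels are potent (\cref{lem:levels-potent-transitive}), Potent Induction applies to the formula ``$s$ is a level with kernel $A$ and there is a level $t$ with kernel $A$ incomparable with $s$''. It yields an $\in$-minimal such $s$. Applying Potent Induction a second time to ``$t$ is a level with kernel $A$ incomparable with $s$'' yields an $\in$-minimal such $t$. By minimality of $s$, every level $r\in s$ with kernel $A$ is comparable with every level of kernel $A$. By minimality of $t$, every level $r\in t$ with kernel $A$ is comparable with $s$.

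The core step is to show $s\subseteq t$; by symmetry the same argument gives $t\subseteq s$, so $s=t$, a contradiction. By \cref{lem:level-decomposition}, $s=A\cup\pot(L_s)$, and likewise $t=A\cup\pot(L_t)$. Since $A=\ker(t)\subseteq t$ (if $t\neq A$ then $A\in t$ by \cref{lem:history-kernel}, and $t$ is transitive), the urelements of $s$ lie in $t$. For a set $x\in s$, pick $r\in L_s$ with $x\subseteq r$. Since $r$ has kernel $A$ and $r\in s$, the minimality of $s$ gives $r\in t$, $r=t$, or $t\in r$. If $r\in t$, then $x\in t$ by potency. If $r=t$ or $t\in r$, then $t\in s$ or $t\in r\in s$, so $t\in s$ by transitivity of $s$, contradicting incomparability. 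Hence $s\subseteq t$. For the symmetric inclusion $t\subseteq s$, take $r\in L_t$. Minimality of $t$ gives that $r$ is comparable with $s$, and the cases $r=s$ or $s\in r$ again force $s\in t$, contradicting incomparability; so $r\in s$ and potency gives $t\subseteq s$. The degenerate case $s=A$ or $t=A$ is handled by the same decomposition, since then $L_s=\emptyset$.

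For the consequence, $\in$ restricted to levels of kernel $A$ is now connected. It is irreflexive and well-founded in the schematic sense by Potent Induction applied to any nonempty definable collection of such levels, since $s\in s$ would contradict minimality for $\varphi(x)\equiv x=s$. Transitivity of the order follows from the transitivity of levels. Hence these levels are well-ordered by $\in$.

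The main obstacle is setting up the double minimality correctly in a schematic, first-order way. The minimal $t$ has to be chosen relative to the fixed $s$, and the $L_t$ case must use minimality of $t$ rather than of $s$. One also has to check that the kernel restriction is preserved, which is exactly what $L_s$ provides.
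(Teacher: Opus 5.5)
Your proof is correct and follows the paper's argument essentially step for step. You use Potent Induction twice, first for an $\in$-minimal $s$ and then for an $\in$-minimal $t$ incomparable with it, and combine \cref{lem:level-decomposition} with potency to get $s\subseteq t$ and symmetrically $t\subseteq s$. You also spell out the symmetric inclusion and the derivation of the well-ordering consequence (irreflexivity, transitivity, schematic well-foundedness), which the paper leaves implicit.
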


\begin{proof}
Let $A$ be a set of urelements. Two levels with kernel $A$ are \textit{incomparable} if neither belongs to the other and they are not equal. Suppose, for a contradiction, that there are incomparable levels with kernel $A$. Since every level is potent, by Potent Induction, there is an $\in$-minimal level $s$ with kernel $A$ which is incomparable with some level with kernel $A$; fix such an $s$. By a second application of Potent Induction, fix an $\in$-minimal level $t$ with kernel $A$ which is incomparable with $s$.

We show that $s\subseteq t$. Let $x\in s$. If $x$ is a urelement, then $x\in\ker(s)=A=\ker(t)$, and since $t$ is transitive, $x\in t$. Suppose $\Set(x)$. By \cref{lem:level-decomposition}, there is a level $r\in s$ such that $\ker(r)=A$ and $x\subseteq r$. By the minimality of $s$, the levels $r$ and $t$ are comparable. If $r=t$ or $t\in r$, then $t\in s$, contradicting the incomparability of $s$ and $t$. Hence $r\in t$. Since $t$ is potent, $x\in t$. The same argument, using the minimality of $t$, gives $t\subseteq s$. So $s=t$, again contradicting the assumption. Therefore any two levels with the same kernel are comparable.
\end{proof}

We conclude this subsection with \textit{the Level-Shrink Lemma}: any level can be shrunk to a level with a smaller kernel that contains every set in the original level whose kernel is small. We first isolate the general fact behind several later constructions of histories with a fixed kernel.

\begin{lemma}[\LTUMinus]\label{lem:fixed-kernel-history}
Let $A$ be a set of urelements and let $h$ be a set satisfying
\begin{enumerate}
\item $A = \{a \in h \mid \Ur(a)\}$;

\item every set $l\in h$ is a level with $\ker(l) = A$; and

\item if $q \in l\in h$ and $q$ is a level with $\ker(q) = A$, then $q\in h$.
\end{enumerate}
Then $h$ is a history and $\ker(h)=A$.
\end{lemma}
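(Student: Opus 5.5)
The plan is to compute $\ker(h)$ first, and then check the three clauses of the definition of a history.

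\emph{Computing $\ker(h)$.} By \cref{prop:kernelbasics}(4), a urelement $a$ lies in $\ker(h)$ iff $a\in h$ or $a\in\ker(l)$ for some set $l\in h$. By hypothesis (1), the first disjunct means $a\in A$. By hypothesis (2), $\ker(l)=A$ for every set $l\in h$, so the second disjunct also gives $a\in A$. Conversely, $A\subseteq\ker(h)$ by \cref{prop:kernelbasics}(3). Hence $\ker(h)=A$.

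\emph{Clause (3).} Let $l\in h$ be a set. By (2), $l$ is a level with kernel $A$, so \cref{lem:history-kernel} gives $l=\ker(l)=A$ or $A=\ker(l)\in l$. These are exactly the two options required.

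\emph{Clause (2), the main step.} We must show $l=\pot(l\cap h)$ for every set $l\in h$. By \cref{lem:level-decomposition},
$$l=\ker(l)\cup\pot(L_l)=A\cup\pot(L_l),$$
where $L_l=\{r\in l\mid \Lev(r)\land\ker(r)=A\}$. Hypothesis (3) says $L_l\subseteq h$, so $L_l\subseteq l\cap h$. For the reverse inclusion, any set in $l\cap h$ is a level with kernel $A$ by (2), hence lies in $L_l$. So $l\cap h$ consists of $L_l$ together with the urelements of $l\cap h$. Those urelements lie in $A$ by (1), and also in $l$.

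Now compare $\pot(l\cap h)$ with $\pot(L_l)$. They have the same set part, since urelements contribute no subsets. Their urelement parts are $\ker(l\cap h)$ and $\ker(L_l)$, respectively. Since $l\cap h\subseteq l$, we have $\ker(l\cap h)\subseteq\ker(l)=A$, so $\pot(l\cap h)\subseteq A\cup\pot(L_l)=l$.

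For $l\subseteq\pot(l\cap h)$, sets in $l$ are handled by the decomposition. The delicate point is that every $a\in A$ must lie in $\ker(l\cap h)$. There are two cases.

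\begin{itemize}
\item If $L_l\neq\emptyset$, pick $r\in L_l$. Then $A=\ker(r)\subseteq\ker(l\cap h)$ by \cref{prop:kernelbasics}(2).
\item If $L_l=\emptyset$, then $l=A\cup\pot(\emptyset)=A$. Note $\pot(\emptyset)=\emptyset$, since $\ker(\emptyset)=\emptyset$ because $\emptyset$ is transitive. Then $l\cap h=A\cap h$, which is $A$ by (1). So $\pot(l\cap h)=\pot(A)=\{y\mid y\subseteq a\in A\}\cup A=A$, as members of $A$ are urelements.
\end{itemize}

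Either way $l=\pot(l\cap h)$. This empty-$L_l$ case is the one to watch.

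\emph{Clause (1)} holds because $h$ is a set by assumption. So $h$ is a history with $\ker(h)=A$.
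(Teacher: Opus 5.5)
Your proof is correct and follows essentially the same route as the paper: compute $\ker(h)$ via \cref{prop:kernelbasics}(3),(4), obtain $l=\pot(l\cap h)$ from \cref{lem:level-decomposition} together with hypothesis (3), and get the kernel clause from \cref{lem:history-kernel}. The only difference is your case split on whether $L_l$ is empty, which is unnecessary. Since $l$ is transitive, $A=\ker(l)\subseteq l$, and $A\subseteq h$ by (1), so $A\subseteq l\cap h$ and hence $A\subseteq\ker(l\cap h)$ directly by \cref{prop:kernelbasics}(3); this is how the paper handles the urelements.
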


\begin{proof}
Since $A\subseteq h$, \cref{prop:kernelbasics}(3) gives $A\subseteq\ker(h)$. Conversely, if $a\in\ker(h)$, then by \cref{prop:kernelbasics}(4), either $a\in h$, in which case $a\in A$ by (1), or $a\in\ker(l)$ for some set $l\in h$, in which case $a\in A$ by (2). Thus $\ker(h)=A$.

Let $l\in h$ be a level. We show that $l=\pot(l\cap h)$. It follows from conditions (1)--(3) that 
$$l\cap h=\{q\in l\mid \Lev(q)\land\ker(q)=A\}\cup A.$$
\noindent Let $x\in l$. If $x$ is a urelement, then $x\in\ker(l)=A\subseteq\ker(l\cap h)$, so $x\in\pot(l\cap h)$. If $x$ is a set, then by \cref{lem:level-decomposition}, there is a level $q\in l$ such that $x\subseteq q$ and $\ker(q)=A$. By (3), $q\in h$, and hence $x\in\pot(l\cap h)$. Thus $l\subseteq\pot(l\cap h)$. Conversely, let $x\in\pot(l\cap h)$. If $x$ is a set, then $x\subseteq q\in l\cap h$ for some set $q$, so the potency of $l$ gives $x\in l$. If $x$ is a urelement, then $x\in\ker(l\cap h)$ so $x\in l$. Therefore $\pot(l\cap h)\subseteq l$ and hence $l=\pot(l\cap h)$.
Finally, \cref{lem:history-kernel} gives $l=A$ or $A\in l$. Since $A=\ker(h)$, the kernel clause also holds. Therefore $h$ is a history.
\end{proof}

\begin{namedresult*}{Level-Shrink Lemma}[\LTUMinus]\phantomsection\label{lem:level-shrink}
For every set $A$ of urelements and every level $s$ with $A\subseteq\ker(s)$, let
$$s^A=\{x\in s\mid \Set(x)\land \ker(x)\subseteq A\}\cup A.$$
Then $s^A$ is a level with $\ker(s^A) = A$.
\end{namedresult*}

\begin{proof}
A simple kernel calculation using \cref{prop:kernelbasics} shows that $\ker(s^A)= A$. Suppose \textit{for reductio} that $s^A$ is not a level. Since every level is potent, by Potent Induction, fix an $\in$-minimal level $s$ such that $A\subseteq\ker(s)$ and $s^A$ is not a level. Let
$$h=\{r\in s\mid \textup{Lev}(r)\land \ker(r)=A\}\cup A.$$
By \StratificationMinus, there is a supertransitive set containing $s$. Since $A\subseteq\ker(s)$, this set also contains $A$, so \Separation gives $h$ as a set. Its urelement members are exactly the members of $A$, and every set in $h$ is a level with kernel $A$. Moreover, if $q\in r\in h$ and $q$ is a level with kernel $A$, then $r\in s$ and the transitivity of $s$ gives $q\in s$, so $q\in h$. By \cref{lem:fixed-kernel-history}, $h$ is a history and $\ker(h)=A$. We show that $s^A=\pot(h)$.

First let $x\in\pot(h)$. If $x$ is a set, then $x\subseteq r\in h$ for some $r$. Since $r\in s$ and $s$ is potent, $x\in s$; and since $\ker(r)=A$, $\ker(x)\subseteq A$. Hence $x\in s^A$. If $x$ is a urelement, then $x\in\ker(h)=A$, so $x\in s^A$.

Conversely, let $x\in s^A$. If $x$ is a urelement, then $x\in A=\ker(h)\subseteq\pot(h)$. Suppose $\Set(x)$. Then $x\in s$ and $\ker(x)\subseteq A$. By \cref{lem:level-decomposition}, there is a level $r\in s$ such that $x\subseteq r$ and $\ker(r)=\ker(s)$. Fix such an $r$. Since $A\subseteq\ker(r)$ and $r\in s$, the minimality of $s$ gives that $r^A$ is a level, and $\ker(r^A)=A$. Also $r^A\subseteq r\in s$, so $r^A\in s$ by the potency of $s$, and hence $r^A\in h$. It remains only to see that $x\subseteq r^A$. If $y\in x$ is a set, then $y\in r$ and $\ker(y)\subseteq\ker(x)\subseteq A$, so $y\in r^A$. If $y=a$ is a urelement, then $a\in\ker(x)\subseteq A$, so $a\in r^A$. Thus $x\subseteq r^A\in h$, and hence $x\in\pot(h)$. 

Therefore $s^A=\pot(h)$, which means $s^A$ is a level after all, contradicting the assumption about $s$.
\end{proof}

\subsection{\ZFU proves \LTU}
We verify that our definition of level coincides with the notion of hierarchy in \ZFU. Since \ZFU proves that every set is contained in a hierarchy and every hierarchy is supertransitive, \ZFU proves \StratificationMinus and thus \LTUMinus. So in the following argument we may freely use the established consequences of \LTUMinus.
\begin{theorem}[\ZFU]\label{thm:levels-exactly-hierarchies}
$x$ is a level if and only if $x$ is a hierarchy.
\end{theorem}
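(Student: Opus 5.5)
We prove the two directions separately, working in \ZFU and freely using the \LTUMinus results above.

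\emph{Hierarchies are levels.} Fix a set $A$ of urelements. We show by transfinite induction on $\alpha$ that $V_\alpha(A)$ is a level with kernel $A$. First we check that $\ker(V_\alpha(A))=A$. The set $A$ is contained in every $V_\alpha(A)$, and every member of $V_\alpha(A)$ has transitive closure with urelements only from $A$; so \cref{prop:kernelbasics} gives the claim.

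For the induction we use the history
$$h_\alpha=\{V_\beta(A)\mid \beta<\alpha\}\cup A.$$
It is a set by \Replacement. We apply \cref{lem:fixed-kernel-history} to $h_\alpha$, checking its three conditions.
\begin{enumerate}
\item The urelements of $h_\alpha$ are exactly $A$.
\item Every set in $h_\alpha$ is a level with kernel $A$, by the induction hypothesis.
\item If $q\in V_\beta(A)$ is a level with kernel $A$, then $q=V_\gamma(A)$ for some $\gamma<\beta$.
\end{enumerate}
Condition (3) is the subtle point. Since $V_\gamma(A)$ for $\gamma<\beta$ are levels with kernel $A$, \cref{lem:same-kernel-levels-comparable} makes $q$ comparable with each of them. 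I would rather argue via rank: let $\gamma$ be least with $q\subseteq V_\gamma(A)$. Comparability with $V_\gamma(A)$, combined with minimality of $\gamma$ and \cref{lem:level-decomposition}, should force $q=V_\gamma(A)$.

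A cleaner route avoids (3) altogether: verify the history clauses for $h_\alpha$ directly. For each $\beta<\alpha$ we have $V_\beta(A)\cap h_\alpha=\{V_\gamma(A)\mid\gamma<\beta\}\cup A$, and a direct computation gives $\pot$ of this set equal to $V_\beta(A)$ (the successor and limit cases are separate but routine). The kernel clause holds because $A\in V_\beta(A)$ for $\beta\ge1$ and $V_0(A)=A$. Finally, $\pot(h_\alpha)=V_\alpha(A)$ by the same computation. I prefer this direct route.

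\emph{Levels are hierarchies.} Suppose some level is not a hierarchy. By Potent Induction, fix an $\in$-minimal such level $s$, and let $A=\ker(s)$. By \cref{lem:level-decomposition}, $s=A\cup\pot(L_s)$. By minimality, every $r\in L_s$ is a hierarchy with kernel $A$, and hence (using the computed kernels) of the form $V_\beta(A)$. Let
$$\alpha=\{\beta\mid V_\beta(A)\in L_s\}.$$
This is a set by \Replacement and \Separation. Transitivity of $s$ together with comparability (\cref{lem:same-kernel-levels-comparable}) shows that $\alpha$ is downward closed, hence an ordinal; we need here that $V_\beta(A)$ are distinct for distinct $\beta$, which is standard. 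Then $s=A\cup\pot(\{V_\beta(A)\mid\beta<\alpha\})$, which equals $V_\alpha(A)$ by the same computation as before. This contradicts the choice of $s$.

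The main obstacle is the computation
$$\pot\bigl(\{V_\beta(A)\mid\beta<\alpha\}\cup A\bigr)=V_\alpha(A),$$
including the edge cases $\alpha=0$ (where $\pot(A)=A$) and limit $\alpha$ (where subsets of earlier $V_\beta(A)$ exhaust the union). The kernel bookkeeping must also be handled so that urelements appear exactly when needed.
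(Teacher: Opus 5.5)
Your proof is correct. Your first direction is the paper's argument; your converse direction takes a genuinely different route.

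For the first direction, the paper also takes $h_\alpha(A)=A\cup\{V_\beta(A)\mid\beta<\alpha\}$, notes $\pot(h_\beta(A))=V_\beta(A)$ and $V_\beta(A)\cap h_\alpha(A)=h_\beta(A)$, and reads off the history clauses directly. Your abandoned route through \cref{lem:fixed-kernel-history} would have needed essentially the converse direction to settle condition (3), so you were right to drop it. With the direct route, no induction on $\alpha$ is needed.

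For the converse, the paper fixes a witnessing history $h$ for a non-initial level $s$. It uses \cref{lem:history-members-kernel,lem:history-members-levels,lem:same-kernel-levels-comparable} to enumerate the set members of $h$ as an $\in$-increasing sequence $\langle l_\beta\mid\beta<\alpha\rangle$. It then shows $A\subseteq h$ from $A=\pot(A\cap h)$, proves $l_\beta=V_\beta(A)$ by induction along this enumeration, and concludes $h=h_\alpha(A)$.

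You never look at a history. You take an $\in$-minimal counterexample and write $s=A\cup\pot(L_s)$ by \cref{lem:level-decomposition}. Minimality plus kernel bookkeeping identifies $L_s$ with $\{V_\beta(A)\mid\beta<\alpha\}$. You then get that $\alpha$ is an ordinal from the transitivity of $s$ together with the first direction: if $\gamma<\beta\in\alpha$, then $V_\gamma(A)\in V_\beta(A)\in s$, and $V_\gamma(A)$ is a level with kernel $A$, so $\gamma\in\alpha$. So the comparability lemma you cite is not actually needed at that step.

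What each route buys:
\begin{itemize}
\item Your route is more modular. It bypasses the linear-ordering lemma and the explicit analysis of $h$, at the cost of relying on the full first direction (that every $V_\gamma(A)$ is a level with kernel $A$), not just the computation $\pot(h_\beta(A))=V_\beta(A)$.
\item The paper's route gives extra information: the witnessing history of a non-initial level $V_\alpha(A)$ is exactly $h_\alpha(A)$, so histories are uniquely determined by their levels.
\end{itemize}
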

\begin{proof}
To show that every hierarchy is a level, for every set $A$ of urelements and ordinal $\alpha$, define $h_\alpha (A) = A\cup\{V_\beta(A)\mid\beta<\alpha\}$. It is easy to check that $V_\alpha (A) = \pot (h_\alpha (A))$. So for every $V_\beta(A) \in h_\alpha (A)$, $V_\beta (A) = \pot (h_\beta (A)) = \pot (V_\beta (A) \cap h_\alpha(A))$. Since every non-initial hierarchy contains $A$, it follows that $h_\alpha(A)$ is a history and hence $V_\alpha(A)$ is a level.

Conversely, let $s =\pot(h)$ be a non-initial level, and put $A=\ker(h)=\ker(s)$. By \cref{lem:history-members-kernel,lem:history-members-levels}, the set members of $h$ are levels with kernel $A$; so by \cref{lem:same-kernel-levels-comparable}, they are well-ordered by $\in$ with some order type $\alpha$. So we can enumerate the levels in $h$ as $\{l_\beta \mid \beta < \alpha\}$. Since $s$ is non-initial, $h\neq A$ so $A\in h$ and hence $A=\pot(A\cap h) = A\cap h$. Thus $A\subseteq h$. We therefore have
$$l_\beta\cap h=A\cup\{l_\gamma\mid\gamma<\beta\},$$
for every $\beta < \alpha$. Assuming that $l_\gamma = V_\gamma(A)$ for every $\gamma < \beta$, we have 
$$l_\beta = \pot(l_\beta \cap h) = \pot(A\cup\{V_\gamma(A)\mid\gamma<\beta\}) = \pot (h_\beta(A)) = V_\beta (A).$$
So it follows by induction that $l_\beta=V_\beta(A)$ for all $\beta < \alpha$. Consequently,
$$h=A\cup\{V_\beta(A)\mid\beta<\alpha\} = h_\alpha(A)$$
and so $s =V_\alpha(A)$. Thus every level is a hierarchy.\end{proof}

\noindent Consequently, \LTU is a subtheory of \ZFU.  

We note that \ZFU, and hence \LTU, has many models in which \UrSet fails. Somewhat surprisingly, the urelements may form a proper class without being ``large'', e.g., there are models in which every initial level is finite (Theorem \ref{thm:zfu-not-strongly-quasi-categorical}). And for any infinite cardinal $\kappa$, there are models in which every initial level has cardinality at most $\kappa$, even though no set contains all the urelements; see Yao \cite[Theorem 2.17]{Yao2026AxiomatizationForcing}. \UrSet can also fail when the class of urelement is large. Yet intermediate notions of largeness come apart: there are models in which every pure set is equinumerous to an initial level, while some impure set is not (\cite[Theorem 37]{Yao2025Plenitude} ). Furthermore, second-order \ZFU has models where the urelements are strictly more numerous than the pure sets (see \cite{HamkinsYao2024AbundantUrelements}). In view of Theorem \ref{thm:levels-exactly-hierarchies} and the fact that \LTU interprets \STU (Theorem \ref{thm:ltu-interprets-stu}), these results exhibit many ways of telling the Basic Iterative Story in which neither \UrStage nor \UrSet holds.\footnote{Button \cite[Appendix B]{Button2021LevelTheory1} considers one such story, motivated by Limitation of Size, in which the urelements are as numerous as all objects. We take no stand on whether this is the most natural iterative story when \UrSet fails. Our point is only that it is not the only one.} Taking \UrStage to be part of the iterative conception would therefore rule out all these stories as illegitimate with no philosophical justification.

\section{Comparison}\label{sec: Comparison}

\subsection{Consequences of \STU}\label{subsec: STU consequences}
We now compare \STU with \LTU by first showing that \STU proves \LTUMinus. One key notion in stage theory is \textit{slice} defined in \cite{Button2021LevelTheory1}.

\begin{definition}\label{def:stu-slice}
For any stage $\mathbf{s}$, the $\mathbf{s}$-\textit{slice} is
$\check{\mathbf{s}}=\{x\mid x\avail\mathbf{s}\}.$ An object $x$ is a \textit{slice}, written $\Slice(x)$, iff $x=\check{\mathbf{s}}$ for some stage $\mathbf{s}$.\footnote{ In the pure setting this coincides with Button's definition of a slice, since sets are available at a stage exactly when they were formed at a stage below it. With urelements, the slice at $\mathbf{s}$ also contains the urelements which become available at $\mathbf{s}$ itself.}
\end{definition}

\begin{lemma}[\STU]\label{lem:stu-slice-properties}
Let $\mathbf{s}$ be a stage.
\begin{enumerate}
\item $\check{\mathbf{s}}$ is a set and $\check{\mathbf{s}} \form \mathbf{s}$.

\item For every set $x$, $x\form \mathbf{s}\leftrightarrow x\subseteq \check{\mathbf{s}}.$

\item $\check{\mathbf{s}}$ is supertransitive.

\end{enumerate}
\end{lemma}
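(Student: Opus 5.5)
The plan is to prove (1) first, since (2) and (3) both lean on it, and to derive everything directly from \Specification, \Priority, \Staging and \OrderAxiom.

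For (1), apply \Specification at $\mathbf{s}$ with $\varphi(x)$ the formula $x\avail\mathbf{s}$; stage parameters are permitted in the scheme. The antecedent $\forall x(x\avail\mathbf{s}\rightarrow x\avail\mathbf{s})$ is trivial. So there is a set $y$ with $y\form\mathbf{s}$ and $\forall x(x\in y\leftrightarrow x\avail\mathbf{s})$. By \Extensionality this $y$ is unique, so $\check{\mathbf{s}}$ is a set and $\check{\mathbf{s}}\form\mathbf{s}$.

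For (2), I argue the two directions separately.
\begin{itemize}
\item Left to right: if $x$ is a set with $x\form\mathbf{s}$, then \Priority gives $y\avail\mathbf{s}$ for every $y\in x$, so $x\subseteq\check{\mathbf{s}}$.
\item Right to left: if $x\subseteq\check{\mathbf{s}}$, apply \Specification with $\varphi(z)$ the formula $z\in x$ (with $x$ as a parameter). Since every member of $x$ is available at $\mathbf{s}$, we get a set $y\form\mathbf{s}$ with the same members as $x$. By \Extensionality $y=x$, so $x\form\mathbf{s}$.
\end{itemize}

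For (3), I establish transitivity and then potency, both via (2).
\begin{itemize}
\item Transitivity: let $z\in y\in\check{\mathbf{s}}$. Then $y$ is a set by \UrDef, and $y\avail\mathbf{s}$. Since $y$ is a set, the urelement disjunct of $\avail$ fails, so $y\prec\mathbf{s}$; that is, $y\form\mathbf{r}$ for some $\mathbf{r}<\mathbf{s}$. \Priority gives $z\avail\mathbf{r}$, and persistence of availability (from \OrderAxiom, as noted after the definition of \STU) gives $z\avail\mathbf{s}$. So $z\in\check{\mathbf{s}}$.
\item Potency: let $y\subseteq z\in\check{\mathbf{s}}$, with $y,z$ sets. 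As above, $z\form\mathbf{r}$ for some $\mathbf{r}<\mathbf{s}$, so $z\subseteq\check{\mathbf{r}}$ by (2). Hence $y\subseteq\check{\mathbf{r}}$, and (2) gives $y\form\mathbf{r}$. Then $y\prec\mathbf{s}$, so $y\in\check{\mathbf{s}}$.
\end{itemize}

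I expect no real obstacle here. The one point needing care is that a set available at $\mathbf{s}$ must actually occur strictly below $\mathbf{s}$, since only urelements can be available at the stage where they occur. Both halves of (3) depend on this observation.
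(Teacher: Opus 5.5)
Your proposal is correct and follows the paper's proof essentially step for step: \Specification applied to $x\avail\mathbf{s}$ for (1), \Priority in one direction and \Specification plus \Extensionality in the other for (2), and for (3) the observation that a set available at $\mathbf{s}$ must occur at some $\mathbf{r}<\mathbf{s}$, followed by \Priority and persistence for transitivity and by (2) at $\mathbf{r}$ for potency. Your extra remarks (uniqueness of $\check{\mathbf{s}}$ via \Extensionality, and using \UrDef to see that $y$ is a set) only make explicit details the paper leaves tacit.
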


\begin{proof}
(1) Applying \Specification at $\mathbf{s}$ to the formula $x\avail\mathbf{s}$ gives the set $\check{\mathbf{s}}$, and $\check{\mathbf{s}}\form\mathbf{s}$.

(2) If $x\form\mathbf{s}$, then every member of $x$ is available at $\mathbf{s}$ by \Priority, so $x\subseteq\check{\mathbf{s}}$. Conversely, suppose that $x\subseteq\check{\mathbf{s}}$. Then every member of $x$ is available at $\mathbf{s}$. Applying \Specification at $\mathbf{s}$ to the formula $y\in x$ gives the set $x$, and $x\form\mathbf{s}$.

(3) First suppose $y\in x\in\check{\mathbf{s}}$. Since $x$ is a set available at $\mathbf{s}$, $x\form\mathbf{r}<\mathbf{s}$ for some stage $\mathbf{r}$. By \Priority, $y$ is available at $\mathbf{r}$ and hence available at $\mathbf{s}$. Hence $y\in\check{\mathbf{s}}$, so $\check{\mathbf{s}}$ is transitive. Now suppose $x\subseteq y\in\check{\mathbf{s}}$. Then $y$ is a set such that $y \form \mathbf{r}<\mathbf{s}$ for some stage $\mathbf{r}$. So $x \subseteq \check{\mathbf{r}}$ by \Priority. Hence (2) gives $x\form\mathbf{r}$, so $x\prec\mathbf{s}$ and therefore $x\in\check{\mathbf{s}}$. Thus $\check{\mathbf{s}}$ is potent, and hence supertransitive.
\end{proof}

\begin{theorem}\label{thm:stu-proves-ltu-minus}
\STU proves \LTUMinus.
\end{theorem}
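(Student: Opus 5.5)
The theory \LTUMinus has four axioms: \UrDef, \Extensionality, \Separation and \StratificationMinus. Since the first two are literally axioms of \STU, the plan is to verify \Separation and \StratificationMinus inside \STU, translating $\mathcal{L}_\Ur$-formulas into $\mathcal{L}_\textbf{stage}$ in the obvious way.

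For \StratificationMinus, let $x$ be a set. By \Staging, fix a stage $\mathbf{s}$ with $x\form\mathbf{s}$. By \cref{lem:stu-slice-properties}(2), $x\subseteq\check{\mathbf{s}}$. By \cref{lem:stu-slice-properties}(1) and (3), $\check{\mathbf{s}}$ is a supertransitive set, which is what \StratificationMinus demands.

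For \Separation, fix a set $x$ and a formula $\varphi(z)$ of $\mathcal{L}_\Ur$, possibly with parameters. Again use \Staging to fix $\mathbf{s}$ with $x\form\mathbf{s}$. By \Priority, every $z\in x$ satisfies $z\avail\mathbf{s}$. Hence the formula $z\in x\land\varphi(z)$ satisfies the hypothesis of \Specification at $\mathbf{s}$, since anything satisfying it is available at $\mathbf{s}$. \Specification then yields a set $y$ with $y\form\mathbf{s}$ and $\forall z(z\in y\leftrightarrow z\in x\land\varphi(z))$, as required.

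The only delicate point is the proviso that \Specification applies to ``suitable'' formulas of $\mathcal{L}_\textbf{stage}$. We must check that every $\mathcal{L}_\Ur$-formula, with its parameters, counts as suitable; I read the scheme as allowing arbitrary formulas with object and stage parameters, and then this is immediate. We should also note that the set produced by \Specification is a genuine $\mathcal{L}_\Ur$-set, as \Separation requires; this holds because \Specification asserts $\Set(y)$ explicitly. Beyond this bookkeeping I expect no real obstacle. The substantive work was already done in \cref{lem:stu-slice-properties}, which supplies the supertransitive witness for \StratificationMinus.
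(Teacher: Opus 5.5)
Your proposal is correct and matches the paper's proof essentially step for step. You obtain \Separation from \Staging, \Priority and \Specification applied to $z\in x\land\varphi(z)$, and \StratificationMinus from \Staging together with \cref{lem:stu-slice-properties}(2) and (3); your remarks about suitable formulas and the explicit $\Set(y)$ clause are bookkeeping that the paper leaves implicit.
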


\begin{proof}
\UrDef and \Extensionality are axioms of \STU. To prove \Separation, fix a set $w$. By \Staging, there is a stage $\mathbf{s}$ such that $w\form\mathbf{s}$. By \Priority, every member of $w$ is available at $\mathbf{s}$. Hence \Specification at $\mathbf{s}$, applied to the formula $x\in w\land\varphi(x)$, gives the required subset of $w$. For \StratificationMinus, let $x$ be a set. By \Staging, there is a stage $\mathbf{s}$ such that $x\form \mathbf{s}$. By \cref{lem:stu-slice-properties}(2), $x\subseteq\check{\mathbf{s}}$; and by \cref{lem:stu-slice-properties}(3), $\check{\mathbf{s}}$ is supertransitive. Thus every set is contained in a supertransitive set.
\end{proof}

The following lemma about slice decomposition will be useful later on.
\begin{lemma}[\STU]\label{lem:stu-slice-decomposition}
Let $\mathbf{s}$ be a stage. Then $$\check{\mathbf{s}}=\pot(H_{\mathbf{s}})\cup\ker(\check{\mathbf{s}}),$$ where $H_{\mathbf{s}}=\{x\in\check{\mathbf{s}}\mid \Slice(x)\}.$ 
\end{lemma}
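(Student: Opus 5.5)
We prove the two inclusions separately, working in \STU and relying on \cref{lem:stu-slice-properties} and \cref{prop:kernelbasics}; the latter is available because \STU proves \LTUMinus (\cref{thm:stu-proves-ltu-minus}). First we check that the right-hand side is a set. $H_{\mathbf{s}}$ is obtained by \Separation on $\check{\mathbf{s}}$ using the formula $\exists\mathbf{r}(x=\check{\mathbf{r}})$. This formula belongs to $\mathcal{L}_\textbf{stage}$, and \STU gives \Separation for such formulas through \Specification, exactly as in the proof of \cref{thm:stu-proves-ltu-minus}. The sets $\pot(H_{\mathbf{s}})$ and $\ker(\check{\mathbf{s}})$ then exist by \cref{prop:kernel-pot-existence}.

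For $\supseteq$: every element of $\ker(\check{\mathbf{s}})$ lies in $\check{\mathbf{s}}$, because $\check{\mathbf{s}}$ is transitive by \cref{lem:stu-slice-properties}(3) and $\ker(\check{\mathbf{s}})$ is contained in every transitive superset of $\check{\mathbf{s}}$. Now take $x\in\pot(H_{\mathbf{s}})$. If $x$ is a set, then $x\subseteq y$ for some $y\in H_{\mathbf{s}}\subseteq\check{\mathbf{s}}$, and the potency of $\check{\mathbf{s}}$ gives $x\in\check{\mathbf{s}}$. If $x$ is a urelement, then $x\in\ker(H_{\mathbf{s}})\subseteq\ker(\check{\mathbf{s}})$ by \cref{prop:kernelbasics}(1), and we are back in the first case.

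For $\subseteq$: urelements in $\check{\mathbf{s}}$ belong to $\ker(\check{\mathbf{s}})$ by \cref{prop:kernelbasics}(3). So let $x\in\check{\mathbf{s}}$ be a set. Because $x$ is a set, availability gives a stage $\mathbf{r}<\mathbf{s}$ with $x\form\mathbf{r}$. By \cref{lem:stu-slice-properties}(2), $x\subseteq\check{\mathbf{r}}$. By \cref{lem:stu-slice-properties}(1), $\check{\mathbf{r}}\form\mathbf{r}<\mathbf{s}$, so $\check{\mathbf{r}}\prec\mathbf{s}$ and hence $\check{\mathbf{r}}\in\check{\mathbf{s}}$. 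It is a slice, so $\check{\mathbf{r}}\in H_{\mathbf{s}}$. Therefore $x\subseteq\check{\mathbf{r}}\in H_{\mathbf{s}}$, which means $x\in\pot(H_{\mathbf{s}})$.

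The only point that needs care is the existence of $H_{\mathbf{s}}$, since $\Slice$ is a formula of the stage language rather than of $\mathcal{L}_\Ur$. We handle it by applying \Specification at $\mathbf{s}$ directly to $x\avail\mathbf{s}\land\Slice(x)$. The rest is a routine unfolding of definitions.
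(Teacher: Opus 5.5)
Your proof is correct and follows the paper's argument essentially step for step. Both split each inclusion into urelement and set cases, use the transitivity and potency of $\check{\mathbf{s}}$ for $\supseteq$, and for $\subseteq$ use the witness slice $\check{\mathbf{r}}$, where $\mathbf{r}<\mathbf{s}$ is a stage at which $x$ occurs. Your explicit check that $H_{\mathbf{s}}$ exists, via \Specification at $\mathbf{s}$ applied to a formula involving $\Slice$, is a detail the paper leaves implicit.
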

\begin{proof}
First suppose $x\in\check{\mathbf{s}}$. If $x$ is a urelement, then $x\in\ker(\check{\mathbf{s}})$. If $x$ is a set, then $x\form\mathbf{r}<\mathbf{s}$ for some stage $\mathbf{r}$, and so $x\subseteq\check{\mathbf{r}}$ by \cref{lem:stu-slice-properties}(2). Since $\check{\mathbf{r}}\form\mathbf{r}$ and $\mathbf{r}<\mathbf{s}$, we have $\check{\mathbf{r}}\in\check{\mathbf{s}}$; hence $\check{\mathbf{r}}\in H_{\mathbf{s}}$, and $x\in\pot(H_{\mathbf{s}})$.

Conversely, let $x\in\pot(H_{\mathbf{s}})\cup\ker(\check{\mathbf{s}})$. If $x\in\ker(\check{\mathbf{s}})$, then $x\in\check{\mathbf{s}}$ since $\check{\mathbf{s}}$ is transitive. If $x\in\pot(H_{\mathbf{s}})$ and $x$ is a set, then $x\subseteq y\in H_{\mathbf{s}}\subseteq\check{\mathbf{s}}$ for some $y$, and so $x\in\check{\mathbf{s}}$ by the potency of $\check{\mathbf{s}}$. If $x\in\pot(H_{\mathbf{s}})$ and $x$ is a urelement, then $x\in\ker(H_{\mathbf{s}})$; since $H_{\mathbf{s}}\subseteq\check{\mathbf{s}}$ and $\check{\mathbf{s}}$ is transitive, $x\in\check{\mathbf{s}}$.
\end{proof}

\begin{theorem}[\STU]\label{thm:stu-foundation}
The axiom of \Foundation holds, i.e.,
$$\Set(x)\land \exists y(y\in x)\rightarrow \exists y\in x\forall z\in y(z\notin x).$$
\end{theorem}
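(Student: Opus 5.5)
We derive \Foundation from Potent Induction (\cref{lem:potent-minimality}), which \STU makes available because \STU proves \LTUMinus (\cref{thm:stu-proves-ltu-minus}). The idea is to transfer minimality from slices to members of $x$. Each set member of $x$ is formed at some stage, and slices are potent by \cref{lem:stu-slice-properties}(3). So we look for an $\in$-minimal slice among those that witness the formation of some member of $x$. The only complication is urelements, which we dispose of first.

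Fix a set $x$ with some member. If $x$ contains a urelement $a$, then $a$ has no members by \UrDef, so $a$ itself is an $\in$-minimal element of $x$ and we are done. Assume now that every member of $x$ is a set. Consider the formula
$$\varphi(w)\;:\equiv\;\exists\mathbf{s}\bigl(w=\check{\mathbf{s}}\land\exists y\in x\,(y\form\mathbf{s})\bigr).$$
It is satisfied, since any $y\in x$ occurs at some stage by \Staging. Every $w$ satisfying $\varphi$ is a slice and hence potent by \cref{lem:stu-slice-properties}(3). Potent Induction therefore yields a slice $w=\check{\mathbf{s}}$ with $\varphi(w)$ such that no $\varphi$-object belongs to $w$. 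Fix $y\in x$ with $y\form\mathbf{s}$.

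We claim $y$ is $\in$-minimal in $x$. Suppose instead that $z\in y$ and $z\in x$. Then $z$ is a set, since all members of $x$ are sets. By \Priority, $z\avail\mathbf{s}$, and since $z$ is a set this means $z\form\mathbf{r}$ for some $\mathbf{r}<\mathbf{s}$. Then $\check{\mathbf{r}}$ satisfies $\varphi$, witnessed by $z\in x$ and $z\form\mathbf{r}$. Moreover $\check{\mathbf{r}}\form\mathbf{r}<\mathbf{s}$ by \cref{lem:stu-slice-properties}(1), so $\check{\mathbf{r}}\prec\mathbf{s}$ and hence $\check{\mathbf{r}}\in\check{\mathbf{s}}=w$. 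This contradicts the minimality of $w$.

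The step needing care is the choice of $\varphi$. We must quantify over stages inside $\varphi$, which is legitimate because Potent Induction is a scheme and \STU's formulas in $\mathcal{L}_\textbf{stage}$ may be used in \Separation as derived in \cref{thm:stu-proves-ltu-minus}. We should check that this \Separation instance, and hence the Potent Induction proof, goes through for $\mathcal{L}_\textbf{stage}$-formulas. It does, since \Specification permits any suitable formula of $\mathcal{L}_\textbf{stage}$. With that settled, the rest is routine.
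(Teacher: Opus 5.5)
Your proof is correct and follows the paper's strategy: apply Potent Induction to a class of slices determined by $x$, then show that a member of $x$ attached to an $\in$-minimal such slice is $\in$-minimal in $x$. The only difference is minor. You use slices of stages at which some member of $x$ occurs, where the paper uses slices meeting $x$, and this lets you argue directly from \Priority and \cref{lem:stu-slice-properties}(1) instead of invoking \cref{lem:stu-slice-decomposition}.
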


\begin{proof}
Let $x$ be a non-empty set. Fix a stage $\mathbf{s}$ such that $x\form \mathbf{s}$. $x\subseteq\check{\mathbf{s}}$, so $x\cap\check{\mathbf{s}}$ is non-empty. The class of slices that intersect $x$ is then non-empty, and every slice is potent by \cref{lem:stu-slice-properties}. Hence, by Potent Induction, there is an $\in$-minimal slice $\check{\mathbf{t}}$ such that $x\cap\check{\mathbf{t}}\neq\emptyset$. Fix $y\in x\cap\check{\mathbf{t}}$, and we may assume $y$ is a set. Let $z\in y$. Since $y\notin\ker(\check{\mathbf{t}})$, \cref{lem:stu-slice-decomposition} gives $y\in\pot(H_{\mathbf{t}})$. Thus $y\subseteq q\in H_{\mathbf{t}}$ for some slice $q$. Hence $z\in q\in\check{\mathbf{t}}$. If $z\in x$, then $q$ is a slice which intersects $x$ and $q\in\check{\mathbf{t}}$, contradicting the minimality of $\check{\mathbf{t}}$. Therefore, $y$ is $\in$-minimal in $x$, as desired.
\end{proof}

\subsection{\LTU Interprets \STU}

Next, we show that \LTU proves whatever \STU proves about sets and urelements. The motivation behind \LTU suggests the following translation.
\begin{definition}\label{def:canonical-translation}
The \textit{canonical translation} $(-)^*$ from $\mathcal{L}_\textbf{stage}$ to $\mathcal{L}_\Ur$ leaves object variables, membership, and urelementhood unchanged, commutes with the Boolean connectives and object quantifiers, and has the following non-trivial clauses:
\begin{align*}
  \Stage(\mathbf{s})^*&\equiv \Lev(\mathbf{s}),\\
(\mathbf{r}<\mathbf{s})^*&\equiv \Lev(\mathbf{r})\land\Lev(\mathbf{s})\land\mathbf{r}\in\mathbf{s},\\
(x\form\mathbf{s})^*&\equiv\bigl(\Set(x)\land x\subseteq\mathbf{s}\bigr)\lor\bigl(\Ur(x)\land x\in\mathbf{s}\bigr),\\
(\forall\mathbf{s}\,\varphi)^*&\equiv\forall\mathbf{s}(\Stage(\mathbf{s})^*\rightarrow\varphi^*).
\end{align*}
\end{definition}
\noindent Namely, levels are intended for stages; a set occurs at a stage by being a subset of the stage, while a urelement occurs at a stage by being a member of it. 

\begin{theorem}\label{thm:ltu-interprets-stu}
For every $\mathcal{L}_\Ur$-sentence $\varphi$, \LTU$\vdash \varphi$ if \STU$\vdash \varphi$.
\end{theorem}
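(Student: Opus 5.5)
The plan is to show that the canonical translation $(-)^*$ is an interpretation of \STU in \LTU. Concretely, for every axiom $\psi$ of \STU, \LTU proves $\psi^*$. Since $(-)^*$ leaves $\mathcal{L}_\Ur$-formulas unchanged (object quantifiers are not relativized, and membership and urelementhood are fixed), a derivation of an $\mathcal{L}_\Ur$-sentence $\varphi$ in \STU then translates into a derivation of $\varphi^*=\varphi$ in \LTU. The standard soundness of translations in many-sorted logic applies here. We only need the stage domain to be nonempty in \LTU, which holds because $\emptyset$ is a set of urelements and hence a level.

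First, \UrDef and \Extensionality translate to themselves. \OrderAxiom$^*$ follows from the transitivity of levels (\cref{lem:levels-potent-transitive}).

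\Staging$^*$ is exactly \Stratification: every set is a subset of some level, and every urelement is a member of some level.

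Before handling \Priority$^*$ and \Specification$^*$, I will compute $(x\avail\mathbf{s})^*$ for a level $s$. It says: either there is a level $r\in s$ with $x\subseteq r$ (if $x$ is a set) or $x\in r$ (if $x$ is a urelement), or $x$ is a urelement in $s$. The key claim is that, for every level $s$,
\[(x\avail s)^*\leftrightarrow x\in s.\]
Left to right is immediate from transitivity and potency of $s$. For right to left, urelements are covered by the second disjunct. If $x\in s$ is a set, \cref{lem:level-decomposition} supplies a level $r\in s$ with $x\subseteq r$.

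With the claim in hand:
\begin{itemize}
\item \Priority$^*$ says that if $x\subseteq s$ (or $x\in s$ for a urelement, which is vacuous), then each member of $x$ is in $s$. This is trivial.
\item \Specification$^*$ becomes: if every $\varphi^*$-object is in $s$, then there is a set $y\subseteq s$ whose members are exactly the $\varphi^*$-objects. This follows from \Separation applied to $s$ with the formula $\varphi^*$. That formula is an $\mathcal{L}_\Ur$-formula, possibly with parameters that were stage parameters and are now ordinary set parameters, so \Separation applies.
\end{itemize}

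The main obstacle I expect is the availability claim, specifically the right-to-left direction for sets. It needs the decomposition of a level into lower levels with the same kernel (\cref{lem:level-decomposition}), which rests on \cref{lem:history-members-levels}. I also need care with the clauses for urelements: a urelement counts as available at $s$ when it occurs at $s$, and the translation of occurrence puts urelements in as members. Finally, I must check that the translated \Specification instance, whose formula quantifies over stages, becomes a legitimate \Separation instance. This holds because $\Lev$ is $\mathcal{L}_\Ur$-definable.
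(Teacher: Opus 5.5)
Your proposal is correct and follows essentially the same route as the paper: show that \LTU proves the canonical translation of each \STU axiom, where the key step is $(x\avail\mathbf{s})^*\leftrightarrow x\in\mathbf{s}$. This comes from transitivity and potency of levels in one direction and \cref{lem:level-decomposition} in the other, after which \Priority is immediate and \Specification follows from \Separation on $\mathbf{s}$. Your remarks on the non-emptiness of the stage sort, and on stage parameters and stage quantifiers in \Specification becoming legitimate $\mathcal{L}_\Ur$-material, are correct details that the paper leaves implicit.
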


\begin{proof}
Since the canonical translation does not change any $\mathcal{L}_\Ur$-formula, it suffices to show that \LTU $\vdash \varphi^*$ for every axiom $\varphi$ of \STU. First, \UrDef and \Extensionality are unchanged, and $\mathsf{Order}^*$ holds in \LTU because levels are transitive. For $\mathsf{Staging}^*$, if $x$ is a set, \Stratification gives a level $\mathbf{s}$ such that $x\subseteq\mathbf{s}$; if $x$ is a urelement, it gives a level $\mathbf{s}$ such that $x\in\mathbf{s}$. In either case, $(x\form\mathbf{s})^*$ holds.

Next, we verify that, under the canonical translation, the objects available at a stage are exactly the members of the corresponding level. That is,
$$\bigl(x\avail\mathbf{s}\bigr)^*\quad\longleftrightarrow\quad x\in\mathbf{s}.$$
For the forward direction, suppose first that $x\prec\mathbf{s}$. There is a level $\mathbf{r}\in\mathbf{s}$ such that $x\form\mathbf{r}$. If $x$ is a set, then $x\subseteq\mathbf{r}\in\mathbf{s}$, so $x\in\mathbf{s}$ by the potency of $\mathbf{s}$. If $x$ is a urelement, then $x\in\mathbf{r}\in\mathbf{s}$, so $x\in\mathbf{s}$ by transitivity. The remaining case of availability is that $x$ is a urelement and $x\form\mathbf{s}$, which translates directly as $x\in\mathbf{s}$. Conversely, suppose $x\in\mathbf{s}$. If $x$ is a urelement, then $x\form\mathbf{s}$ after translation, so $x\avail\mathbf{s}$. If $x$ is a set, \cref{lem:level-decomposition} gives a level $\mathbf{r}\in\mathbf{s}$ such that $x\subseteq\mathbf{r}$. Hence $x\form\mathbf{r}<\mathbf{s}$, so again $x\avail\mathbf{s}$.

For $\mathsf{Priority}^*$, suppose $(x\form\mathbf{s})^*$ and $y\in x$. Then $x$ is a set and $x\subseteq\mathbf{s}$, so $y\in\mathbf{s}$ and hence $y\avail\mathbf{s}$. For $\mathsf{Specification}^*$, suppose every object satisfying $\varphi(x)$ is available at $\mathbf{s}$ under the translation. Then every such object belongs to $\mathbf{s}$. By \Separation, $y=\{x\in\mathbf{s}\mid\varphi(x)\}$ is a subset of $\mathbf{s}$. Therefore, $y\form\mathbf{s}$ after translation, and $y$ has exactly the required members.
\end{proof}

\subsection{Non-Equivalence}

Finally, we show that \STU proves less about sets and urelements than \LTU by constructing a model of \STU where \Stratification fails. Although one can construct trivial finite countermodels as such, our model has infinite stages in order to show that finiteness is not the reason why these two theories come apart.

\begin{theorem}\label{thm:stu-bizarre-model}
\STU does not prove \Stratification.
\end{theorem}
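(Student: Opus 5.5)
The plan is to build an explicit model of \STU in which some set is not included in any level. The idea is to let a urelement $b$ enter the process \emph{late, at the top stage}, after a long stretch of stages built over a different urelement $a$. Then some set formed at the top stage mixes $b$ with an earlier set. Such a set is a subset of the top slice, but the top slice is not a level. Any genuine level with kernel $\{a,b\}$ large enough to contain it would have to contain $\{b\}$ as a member. That in turn would require a stage above the one at which $b$ enters, and there is none.

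Concretely, work in \ZFCU (or \ZFU) as metatheory with two urelements $a,b$.
\begin{itemize}
\item[] \emph{Stages.} Take the stages to be the ordinals $0,1,2,\dots,\omega$ with their usual order.
\item[] \emph{Stages below $\omega$.} Let $M_n$ be the collection of sets built from $\{a\}$ that are formed at stage $n$, i.e.\ $V_{n+1}(\{a\})\setminus\{a\}$, where $V_{n+1}(\{a\})$ is the $(n+1)$st hierarchy over $\{a\}$ from the recursion defining $V_\alpha(A)$ above. So the slice at a finite stage $n$ is $V_n(\{a\})$.
\item[] \emph{Slice at $\omega$.} This is $S=V_\omega(\{a\})\cup\{b\}$.
\item[] \emph{Domain.} The domain is $\{a,b\}\cup V_\omega(\{a\})\cup\Pow(S)$, with real membership.
\item[] \emph{Occurrence.} $a$ occurs at every stage and $b$ only at $\omega$. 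A set $x$ occurs at stage $n\le\omega$ iff every member of $x$ is available at $n$, where availability is computed as in the definition of $\avail$. For $n=\omega$ this means $x\subseteq S$.
\end{itemize}
The stages form an infinite chain, as the paper wants.

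First I verify the \STU axioms.
\begin{itemize}
\item[] \UrDef, \Extensionality and \OrderAxiom are immediate.
\item[] \Staging: every set in $V_\omega(\{a\})$ occurs at some finite stage, and every new set occurs at $\omega$.
\item[] \Priority holds by the definition of $\form$.
\item[] \Specification holds because every subcollection of an available collection is present. At a finite stage $n$ the available objects form $V_n(\{a\})$, whose power set lies in $V_{n+1}(\{a\})$. At stage $\omega$ every subset of $S$ was put in the domain. The only care needed is to check that ``available at $\mathbf{s}$'' computed inside the structure agrees with the intended slices. This is the bookkeeping step: in particular, $b$ is available only at $\omega$, and no set containing $b$, and no set formed only at stage $\omega$, is available anywhere.
\end{itemize}

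Next I show that \Stratification fails for $x=\{b,\{a\}\}$. Since \STU proves \LTUMinus (\cref{thm:stu-proves-ltu-minus}), the structural lemmas on levels apply inside the model. Suppose $y$ is a level with $x\subseteq y$. Then $b,\{a\}\in y$, so $\{a,b\}\subseteq\ker(y)$. The kernel is a set of urelements in the model, so $\ker(y)=\{a,b\}$. Since $\{a\}\in y$ is a set, $y$ is not initial, so $\ker(y)\in y$ by \cref{lem:history-kernel}, i.e.\ $\{a,b\}\in y$. By potency of $y$, $\{b\}\subseteq\{a,b\}\in y$ gives $\{b\}\in y$. Then transitivity and potency of $y$ (\cref{lem:levels-potent-transitive}) give $y\ni\{b\}$ with $y$ a set in the domain. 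But $y\ni\{b\}$ forces $y\notin V_\omega(\{a\})$, so $y\subseteq S$, whereas $\{b\}\notin S$. This is a contradiction.

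The main obstacle I expect is confirming that the internal notions, such as the kernel computed in the model and the availability relation, behave as the external description says. Once that is settled, the non-stratification argument is just the short calculation above.
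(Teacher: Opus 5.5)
Your proof is correct, but it uses a different and much simpler countermodel than the paper does. The paper takes stages of order type $\omega+\omega$ and lets a new urelement $a_n$ enter at each finite stage. It witnesses the failure of \Stratification by $\bigcup_{n<\omega}V_\omega(A_n)$. Refuting \Stratification there needs two ingredients: comparability of levels with the same kernel (\cref{lem:same-kernel-levels-comparable}), and the two claims that this union is formed at $\mathbf{s}_\omega$ while $V_\omega(A)$ is formed at no stage.

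You instead let $b$ enter at a maximal stage, so that $\{a,b\}$ is formed but never available. Then \cref{lem:history-kernel} alone finishes the argument: any level $y\supseteq\{b,\{a\}\}$ is non-initial with $\ker(y)=\{a,b\}\in y$, and no set in your domain has $\{a,b\}$ as a member. Your potency step down to $\{b\}$ is unnecessary, since $\{a,b\}\notin S$ already. Your verification of the \STU axioms and the slice bookkeeping is also fine.

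The price is that your counterexample rests entirely on the top stage.
\begin{itemize}
\item The $\omega$ stages below are idle. The same argument works with just two stages $0<1$, which is exactly the ``trivial finite countermodel'' the paper sets aside. So the infinite chain does not address the paper's stated reason for using infinite stages.
\item Your model violates \StageDirected, since nothing lies above $\omega$.
\item Your witness is lost once one further stage forming all subsets of its slice is added above $\omega$: then $V_1(\{a,b\})$ appears and contains $\{b,\{a\}\}$.
\end{itemize}
The paper's model has no last stage and satisfies \StageDirected. That is what lets it be reused in \cref{thm:stage-directed-not-ltu} to show that \STU${}+{}$\StageDirected still proves neither \Stratification nor \LevelDirected. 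It also supports the paper's point that the divergence between \STU and \LTU is not an artifact of the process simply stopping.
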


\begin{proof}
Let $A=\{a_n\mid n<\omega\}$ be a countably infinite set of urelements. The model will have an $\omega + \omega$-sequence of stages. For each $n<\omega$, recursively define
$$\mathbf{s}_n=\mathcal{P}\left(\bigcup_{i<n}\mathbf{s}_i\cup\{a_n\}\right)\cup\{a_n\}.$$
Thus, at $\mathbf{s}_n$, the new urelement $a_n$ occurs and thus becomes available; then every set of available objects is formed. For every $\omega\leq\alpha<\omega+\omega$ recursively define
$$\mathbf{s}_\alpha=\mathcal{P}\left(\bigcup_{\beta<\alpha}\mathbf{s}_\beta\right).$$
So after all the finite stages, no new urelement occurs and new sets are formed iteratively. Define the object domain $M$ as $\bigcup_{\alpha<\omega+\omega}\mathbf{s}_\alpha$, the stage domain $\mathcal{S}$ as $\{\mathbf{s}_\alpha\mid\alpha<\omega+\omega\}$ and the class of urelements $\Ur$ as $A$. The interpretation of $\in$ is inherited from the background universe. Occurrence is then defined as external membership, i.e.,  $x\form^\mathcal{M}\mathbf{s}_\alpha \text{ iff } x\in\mathbf{s}_\alpha$; and define the stage order by
$\mathbf{s}_\alpha<^\mathcal{M}\mathbf{s}_\beta\text{ iff }\alpha<\beta.$
This completely defines the two-sorted structure
$\mathcal{M}=\langle M,\mathcal{S},A,\in,
\form^\mathcal{M},<^\mathcal{M}\rangle.$ 

Directly from the definition of availability, for each $n< \omega$, we have
$\check{\mathbf{s}}_n^\mathcal{M}
=\bigcup_{i<n}\mathbf{s}_i\cup\{a_n\}$ and for each infinite $\alpha$, $\check{\mathbf{s}}_\alpha^\mathcal{M} =\bigcup_{\beta<\alpha}\mathbf{s}_\beta.$
So it follows that $$\mathbf{s}_n
=\mathcal{P}(\check{\mathbf{s}}_n^\mathcal{M})
\cup\{a_n\}$$ for every $n < \omega$ and $\mathbf{s}_\alpha
=\mathcal{P}(\check{\mathbf{s}}_\alpha^\mathcal{M})$ for every infinite $\alpha$.

We verify that $\mathcal{M}\models\STU$. \UrDef and \Extensionality are inherited from the ambient universe, and \OrderAxiom follows from the ordering of the indices. \Staging holds because $M$ is the union of the stages. For \Priority, suppose that $y\in x \form^\mathcal{M}\mathbf{s}_\alpha$. Since $x$ is a set, $x\subseteq\check{\mathbf{s}}_\alpha^\mathcal{M}$, so $y$ is available at $\mathbf{s}_\alpha$. For \Specification, fix $\mathbf{s}_\alpha$ and suppose that every object satisfying $\varphi(z)$ is available there. Then
$$\{z\in M\mid\mathcal{M}\models\varphi(z)\}
\subseteq\check{\mathbf{s}}_\alpha^\mathcal{M},$$
so this set belongs to $\mathcal{P}(\check{\mathbf{s}}_\alpha^\mathcal{M})\subseteq\mathbf{s}_\alpha$. Hence it is formed at $\mathbf{s}_\alpha$. This proves that $\mathcal{M}\models\STU$. Since \STU proves \LTUMinus by \cref{thm:stu-proves-ltu-minus}, the object structure of $\mathcal{M}$ satisfies \LTUMinus. We may therefore use within $\mathcal{M}$ all the results of Subsection 2.3 proved from \LTUMinus. We now proceed to prove that $\mathcal{M}$ does not satisfy \Stratification.

\begin{claim}\label{claim:countermodel-x-formed}
For each $n<\omega$, put $A_n=\{a_i\mid i<n\}$. Then $\bigcup_{n<\omega}V_\omega(A_n)$ is formed at $\mathbf{s}_\omega$.
\end{claim}
\begin{proof}[Proof of the claim]
For fixed $n$, induction on $m<\omega$ gives
$$V_m(A_n)\subseteq\check{\mathbf{s}}_{n+m}^\mathcal{M}.$$
The case $m=0$ holds because every urelement in $A_n$ occurs below $\mathbf{s}_n$. If the claim holds for $m$, then every subset of $V_m(A_n)$ is formed at $\mathbf{s}_{n+m}$ and is therefore available at $\mathbf{s}_{n+m+1}$, while the urelements in $A_n$ remain available. Hence
$$V_{m+1}(A_n)=\mathcal{P}(V_m(A_n))\cup A_n
\subseteq\check{\mathbf{s}}_{n+m+1}^\mathcal{M}.$$
Thus $V_\omega(A_n)\subseteq\check{\mathbf{s}}_\omega^\mathcal{M}$ for every $n$, and therefore $\bigcup_{n<\omega}V_\omega(A_n)\subseteq\check{\mathbf{s}}_\omega^\mathcal{M}.$ Since $\mathbf{s}_\omega=\mathcal{P}(\check{\mathbf{s}}_\omega^\mathcal{M})$, it follows that $\bigcup_{n<\omega}V_\omega(A_n) \form^\mathcal{M}\mathbf{s}_\omega$.
\end{proof}

\begin{claim}\label{claim:no-full-limit-stage}
No stage of $\mathcal{M}$ forms $V_\omega(A)$.
\end{claim}
\begin{proof}[Proof of the claim]
We first show by induction on $m<\omega$ that
$$V_m(A)\form^\mathcal{M}\mathbf{s}_{\omega+m}
\quad\text{but}\quad
V_m(A)\notin\check{\mathbf{s}}_{\omega+m}^\mathcal{M}.$$
Suppose the claim holds for $m$. Every subset of $V_m(A)$ is formed at $\mathbf{s}_{\omega+m}$ and is therefore available at $\mathbf{s}_{\omega+m+1}$; all urelements in $A$ are also available there. Hence $V_{m+1}(A)$ is formed at $\mathbf{s}_{\omega+m+1}$. But if it were already available there, it would have been formed at $\mathbf{s}_{\omega+m}$; since $V_m(A)\in V_{m+1}(A)$, \Priority would then make $V_m(A)$ available at $\mathbf{s}_{\omega+m}$, contrary to the induction hypothesis.

$V_\omega(A)$ cannot be formed at any finite stage. But for any $\mathbf{s}_{\omega+m}$, the member $V_m(A)$ of $V_\omega(A)$ is not available there. Thus the members of $V_\omega(A)$ are never all available at any stage, so $V_\omega(A)$ is not formed at any stage in $\mathcal{M}$.
\end{proof}

Suppose \textit{for reductio} that $\mathcal{M}$ satisfies \Stratification. Let $x=\bigcup_{n<\omega}V_\omega(A_n)$, which is a set in $\mathcal{M}$ by \cref{claim:countermodel-x-formed}. So $x\subseteq s$ for some level $s$ in $\mathcal{M}$. Since $\ker(x)=A$, we have $\ker(s)=A$. For every $m<\omega$, $V_m(A)$ is a level with kernel $A$ in $\mathcal{M}$ and thus comparable with $s$ by \cref{lem:same-kernel-levels-comparable}. It is clear that $x \not\subseteq V_m (A)$ and so $s \not\subseteq V_m(A)$. Hence, $V_m(A)\subseteq s$ for every $m<\omega$ and so $V_\omega(A) \subseteq s.$ By \Staging, $s$ is formed at some stage $\mathbf{s}_\alpha$ so
$V_\omega(A) \subseteq s\subseteq\check{\mathbf{s}}_\alpha^\mathcal{M}$. $V_\omega(A)$ is therefore formed at $\mathbf{s}_{\alpha}$, contradicting \cref{claim:no-full-limit-stage}.\end{proof}

\subsection{Diagnosis}
Two theories are said to be \textit{set-theoretically equivalent} if they prove the same sentences in the common language of set theory (i.e., $\mathcal{L}_\Ur$ in our case). Thus we have shown that \LTU and \STU are \textit{not} set-theoretically equivalent, which is in direct contrast with Button's result that \LT and \ST are set-theoretically equivalent (\cite[Theorem 4.1]{Button2021LevelTheory1}). A puzzling situation arises. On the one hand, \STU is a literal translation of the Basic Iterative Story. On the other, \LTU is also a faithful formalization since stages are naturally understood as levels. Yet \LTU proves more set-theoretic truths than \STU. Note that one should not blame this on not taking \UrStage as an axiom of \STU: $\mathcal{M}$ in Theorem \ref{thm:stu-bizarre-model} satisfies \UrStage and hence \UrSet because all urelements are available on stage $\mathbf{s}_\omega$.

Two diagnoses are available. The first is that \LTU smuggles something beyond the Basic Iterative Story into its formalization. Our fixed-kernel definition of level requires that the urelements available throughout any one iterative process be exactly those in its initial level. By contrast, in \cref{Section:Intro} we motivated a picture in which new urelements may become available at later stages, a possibility permitted by both the Basic Iterative Story and \STU. It is not obvious, however, that this difference should account for the failure of set-theoretic equivalence. \LTU simply places the additional urelements in different initial levels and lets them generate distinct hierarchies, and this difference in the location of urelements should not make \LTU prove more set-theoretic statements. Whether an alternative definition of level would yield a weaker level theory set-theoretically equivalent to \STU is not known, but we shall not pursue it here (for the reason mentioned below).

Instead, we propose a different diagnosis: the Basic Iterative Story with urelements, unlike in the pure-set context, has certain arbitrariness regarding how sets are formed, which is why it admits different formalizations. Another look at the model $\mathcal{M}$ in \cref{thm:stu-bizarre-model} reveals the arbitrariness: different sets of urelements in $\mathcal{M}$ generate hierarchies of different height. For instance, $\{a_0\}$ generates hierarchies of infinite height such as $V_\omega (\{a_0\})$ while $A$ fails to generate any infinite hierarchy. The stronger theory \LTU cannot exclude this pathological situation either. Consider the model
$$\mathcal{N} =  V_{\omega + \omega} (\{a_0\}) \cup V_\omega (\{a_1\}).$$
It follows from \cref{thm:unions-of-hierarchies} that $\mathcal{N}$ is a model of \LTU. But inside $\mathcal{N}$, $\{a_1\}$ generates no infinite hierarchy while $\{a_0\}$ does. The iterative process built from $a_1$ stopped in a very arbitrary way given that another iterative process has gone further. And as we will discuss in \cref{section:quasi-categoricity}, this kind of \textit{uneven model} will result in a severe failure of quasi-categoricity. So by Theorem \ref{thm:ltu-interprets-stu}, we know that even if there is a version of level theory that is set-theoretically equivalent to \STU, it will also have uneven models and thus face an equally bad failure of quasi-categoricity. Thus, some fundamental principles seem to be missing in the Basic Iterative Story. We now turn to extensions of this story that may lead to a robust conception of set.

\section{Extending the Basic Story}\label{sec:extendingBIS}
In this section, we will consider natural extensions of \STU and \LTU by adding further axioms  and discuss the relationship among these axioms as well as the relationship between correspondingly extended theories. The additional axioms we shall consider are standard and well-known in the literature, but, as we shall see, their relationship and individual strength are completely obscured by the traditional treatment of assuming \UrSet.

\subsection{Extending \STU}
One obvious question left unanswered by the basic iterative conception is, of course, how many stages there are. And as we observed before, one arbitrariness with the Basic Iterative Story is that some iterative process might stop at a place where it should not have stopped. So to extend the Basic Iterative Story, a rule of thumb is to find natural principles asserting that there are as many stages as possible so that every iterative process goes as far as possible. Let us begin with a rather mild principle that the stages are \textit{upward directed}.
\begin{itemize}
\item [] (\StageDirected) \(\exists\mathbf{t}(\mathbf{r}<\mathbf{t}\land\mathbf{s}<\mathbf{t})\).
\end{itemize}
In the pure setting (or assuming \UrSet), one could consider the weaker version that every stage is below some stage (which Button calls Endless). But in our more general setting, since there can be incomparable stages, \StageDirected is a more natural further principle to propose.

A more powerful principle says that stages are unbounded in sets.
\begin{itemize}

\item [] (\StageUnbounded) \(\forall x\in w\exists!y\,\varphi(x,y)\rightarrow\exists\mathbf{s}\forall x\in w\exists y(\varphi(x,y)\land y\avail\mathbf{s})\).
\end{itemize}
\noindent That is, if there is a map from a set to some objects, then there is a stage at which these objects are available.  It will be an immediate consequence of \StageUnbounded that the \Replacement scheme holds (Proposition \ref{prop:stage-axiom-consequences}).
\begin{itemize}
\item [] (\Replacement)\footnote{The variables $w$ and $v$ are intended to range over sets. But the sethood predicates can be omitted given \UrDef.} \(\forall x\in w\exists!y\,\varphi(x,y)\rightarrow\exists v\forall y(y\in v\leftrightarrow\exists x\in w\,\varphi(x,y))\).
\end{itemize}
\noindent Interestingly, Boolos in \cite[228]{Boolos1971IterativeConception} considers the following unbounded principle to motivate \Replacement and remarks that it is a \textit{further thought} in addition to the iterative conception.
\begin{itemize}
\item [] (\StageUnboundedPlus) \(\forall x\in w\exists\mathbf{r}\,\varphi(x,\mathbf{r})\rightarrow\exists\mathbf{s}\forall x\in w\exists\mathbf{r}(\varphi(x,\mathbf{r})\land\mathbf{r}<\mathbf{s})\).
\end{itemize}
\noindent That is, if every member of a set is related to some stage, then there is a stage such that every member of a set  is related to some stage below this stage.

Note that \StageUnboundedPlus does not require the relation to be a map. Even if every member of the set has ``class-many'' stages as witnesses, there is still one stage, according to \UnboundedPlus, above enough related stages. \StageUnboundedPlus yields the \Collection scheme (Proposition \ref{prop:stage-axiom-consequences}).
\begin{itemize}

\item [] (\Collection) \(\forall x\in w\exists y\,\varphi(x,y)\rightarrow\exists v\forall x\in w\exists y\in v\,\varphi(x,y)\).
\end{itemize}
\noindent It is a textbook result that \Collection is equivalent to \Replacement over the remaining axiom of \ZFU + \UrSet.

Finally, let us consider the idea of \textit{reflection}. Set-theoretic reflection principles assert that the set-theoretic universe is \textit{indescribable}, i.e., unable to be characterized by any statement. However, there are two ways in which this indescribability idea can be spelled out. One is \textit{partial reflection}.
\begin{itemize}
\item [] \PartialReflection Any true statement is true in some initial fragment of the universe.
\end{itemize}
The other is the stronger notion of \textit{complete reflection}.
\begin{itemize}
\item [] \CompleteReflection For any statement $\varphi$, there is an arbitrarily large fragment such that $\varphi$ is absolute between the fragment and the universe.
\end{itemize}
These two forms of reflection are rarely distinguished in the literature. This is mainly because no such distinction can be made in \ZF due to the following classical result.
\begin{itemize}
\item [] (\LevyMontagueReflection) \(\forall\alpha\exists\beta>\alpha\forall\vec{x}\in V_\beta(\varphi(\vec{x})\leftrightarrow\varphi^{V_\beta}(\vec{x}))\).
\end{itemize}
\noindent \LevyMontagueReflection is a form of complete reflection: it asserts that for every formula, there is an arbitrarily tall hierarchy such that $\varphi$ is absolute between the hierarchy and the universe, which immediately implies the following form of partial reflection.
\begin{itemize}
\item []  $\forall \vec{x} (\varphi(\vec{x}) \to \exists \alpha (\vec{x} \in V_\alpha \land \varphi^{V_\alpha}(\vec{x})))$
\end{itemize}
\noindent Furthermore, the proof of \LevyMontagueReflection easily generalizes to \ZFU + \UrSet, where $V_\alpha$ is replaced with $V_\alpha (A)$ for the set $A$ of all urelements. In fact, over the remaining axioms of \ZFU + \UrSet, \Replacement, \Collection and the \LevyMontagueReflection are equivalent. Thus, given the tradition of excluding urelements or assuming \UrSet, it is common to view the two versions of unboundedness principles, together with \CompleteReflection, as the same way of extending the basic iterative conception of set; and consequently, \PartialReflection comes for free.\footnote{See Paseau \cite{Paseau2007BoolosJustification} for treating a form of partial reflection as part of the iterative conception.}  As we will see, this equivalence breaks down once we drop \UrSet.

Now we formulate reflection principles in stage theory by making the notion of initial fragment and ``true in a fragment'' precise in $\mathcal{L}_\textbf{stage}$. There is a natural way to proceed: an initial fragment is simply a slice of a stage (Definition \ref{def:stu-slice}) together with the stages below. So ``$\varphi$ is true in \textbf{s}'' will be a form of \textit{stage relativization} defined as follows.
\begin{definition}\label{def:stage-relativization}
Let $\varphi$ be a formula in $\mathcal{L}_\textbf{stage}$ and let $\mathbf{s}$ be a stage variable. The \textit{stage relativization} $\varphi^{\mathbf{s}}$ is defined recursively as follows:
\begin{itemize}
\item [] if $\varphi$ is atomic, then $\varphi^{\mathbf{s}}=\varphi$;

\item [] $(\neg\psi)^{\mathbf{s}}=\neg\psi^{\mathbf{s}}$ and $(\psi\land\theta)^{\mathbf{s}}=\psi^{\mathbf{s}}\land\theta^{\mathbf{s}}$;

\item [] $(\exists x\,\psi)^{\mathbf{s}}=\exists x(x\avail\mathbf{s}\land\psi^{\mathbf{s}})$;

\item [] $(\exists\mathbf{r}\,\psi)^{\mathbf{s}}=\exists\mathbf{r}(\mathbf{r}<\mathbf{s}\land\psi^{\mathbf{s}})$.
\end{itemize}
\end{definition}
\noindent Accordingly, we can formulate the partial reflection principle \StageRPMinus and the complete reflection principle \StageRP as follows.
\begin{itemize}
\item [] (\StageRPMinus) \(\varphi(\vec{x},\vec{\mathbf{r}})\rightarrow\exists\mathbf{s}(\vec{x}\avail\mathbf{s}\land\vec{\mathbf{r}}<\mathbf{s}\land\varphi^{\mathbf{s}}(\vec{x},\vec{\mathbf{r}}))\).\\

\item [] {\small (\StageRP) \(\forall\vec{\mathbf{t}}\exists\mathbf{s}(\vec{\mathbf{t}}<\mathbf{s}\land\forall\vec{x}\avail\mathbf{s}\,\forall\vec{\mathbf{r}}<\mathbf{s}(\varphi(\vec{x},\vec{\mathbf{r}})\leftrightarrow\varphi^{\mathbf{s}}(\vec{x},\vec{\mathbf{r}})))\).}
\end{itemize}
\noindent We now verify some immediate set-theoretic consequences of these axioms and then clarify the relationship among these axioms.
\begin{prop}[\STU]\label{prop:stage-axiom-consequences}\
\begin{enumerate}
\item \StageDirected implies \Powerset and \Pairing.

\item \StageUnbounded implies \Replacement.

\item \StageUnboundedPlus implies \Collection.

\end{enumerate}
\end{prop}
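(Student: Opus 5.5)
The plan is to work entirely in \STU, using \Staging, \Priority, \Specification and the slice facts in \cref{lem:stu-slice-properties}. The shared tool is this observation: if every member of some class is available at a single stage $\mathbf{s}$, then \Specification at $\mathbf{s}$ turns that class into a set formed at $\mathbf{s}$. Each clause then amounts to finding one stage at which the relevant objects are all available.

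For (1), assume \StageDirected. For \Pairing, take objects $x,y$ and use \Staging to get stages $\mathbf{r}$ and $\mathbf{s}$ with $x\form\mathbf{r}$ and $y\form\mathbf{s}$. Then pick $\mathbf{t}$ above both. Availability needs one check, since it is not literally the same as occurrence below. If $x\form\mathbf{r}<\mathbf{t}$, then $x\prec\mathbf{t}$, so $x\avail\mathbf{t}$, and this holds whether $x$ is a set or a urelement. So $x,y\avail\mathbf{t}$, and \Specification with $\varphi(z)\equiv z=x\lor z=y$ gives $\{x,y\}$.

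For \Powerset, take a set $x$ with $x\form\mathbf{s}$. Applying \StageDirected to $\mathbf{s},\mathbf{s}$ gives some $\mathbf{t}>\mathbf{s}$. Every subset $y\subseteq x$ satisfies $y\subseteq\check{\mathbf{s}}$ by \cref{lem:stu-slice-properties}(2), so $y\form\mathbf{s}<\mathbf{t}$ and hence $y\avail\mathbf{t}$. \Specification at $\mathbf{t}$ on the formula ``$\Set(y)\land y\subseteq x$'' then yields $\Pow(x)$.

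For (2), assume $\forall x\in w\exists!y\,\varphi(x,y)$. \StageUnbounded gives a stage $\mathbf{s}$ such that every $y$ with $\exists x\in w\,\varphi(x,y)$ is available at $\mathbf{s}$; this uses the uniqueness of the witness. \Specification at $\mathbf{s}$ on $\exists x\in w\,\varphi(x,y)$ then gives the required $v$.

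For (3), assume $\forall x\in w\exists y\,\varphi(x,y)$ and use \Staging to derive $\forall x\in w\exists\mathbf{r}\,\exists y(\varphi(x,y)\land y\form\mathbf{r})$. Write $\psi(x,\mathbf{r})$ for $\exists y(\varphi(x,y)\land y\form\mathbf{r})$. \StageUnboundedPlus gives $\mathbf{s}$ such that each $x\in w$ has some $\mathbf{r}<\mathbf{s}$ with $\psi(x,\mathbf{r})$, so some witness $y$ satisfies $y\form\mathbf{r}<\mathbf{s}$ and thus $y\avail\mathbf{s}$. Apply \Specification at $\mathbf{s}$ to the formula $y\avail\mathbf{s}\land\exists x\in w\,\varphi(x,y)$. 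This yields a set $v$ containing a witness for each $x\in w$, which is \Collection.

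The main obstacle is small. One must check that \Specification is applied only to formulas whose extensions are provably available at the chosen stage; conjoining ``$y\avail\mathbf{s}$'' handles this in (3). One must also confirm that occurrence at a lower stage always gives availability, including for urelements, which follows from the definition of $\prec$.
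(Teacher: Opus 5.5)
Your proof is correct and follows the paper's argument essentially step for step. You find a common upper stage via \StageDirected (applied to $\mathbf{s},\mathbf{s}$ for \Powerset, with \cref{lem:stu-slice-properties}(2) showing that subsets of $x$ occur at $\mathbf{s}$), use \StageUnbounded together with uniqueness for \Replacement, and combine \Staging and \StageUnboundedPlus with \Specification on $y\avail\mathbf{s}\land\exists x\in w\,\varphi(x,y)$ for \Collection.
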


\begin{proof}
(1) For any $x$ and $y$, \Staging and \StageDirected give a stage $\mathbf{t}$ above two stages at which they occur. Thus $x,y \prec\mathbf{t}$, and \Specification at $\mathbf{t}$ forms $\{x,y\}$. For \Powerset, let $x$ be a set formed at some stage $\mathbf{r}$; \StageDirected, applied to $\mathbf{r}$ twice, gives a stage $\mathbf{t}$ above $\mathbf{r}$. If $y\subseteq x$, then $y \subseteq \check{\mathbf{r}}$ so $y\form\mathbf{r}$. Hence every subset of $x$ is available at $\mathbf{t}$, and \Specification at $\mathbf{t}$ forms $\Pow(x)$.

(2) Suppose $\forall x\in w\exists!y\,\varphi(x,y)$. \StageUnbounded gives a stage $\mathbf{s}$ at which every value is available. \Specification at $\mathbf{s}$, applied to $\exists x\in w\,\varphi(x,z)$, forms $v=\{z\mid\exists x\in w\,\varphi(x,z)\},$ as desired.

(3) Suppose $\forall x\in w\exists y\,\varphi(x,y)$. By \Staging, we have $\forall x\in w\exists\mathbf{r}\exists y\bigl(\varphi(x,y)\land y\form\mathbf{r}\bigr).$
By \StageUnboundedPlus, there is a stage $\mathbf{s}$ such that for every $x\in w$ there is some $\mathbf{r}<\mathbf{s}$ with $\exists y (\varphi (x,y) \land y \form\mathbf{r})$. \Specification at $\mathbf{s}$, applied to $$z \avail\mathbf{s}\land\exists x\in w\,\varphi(x,z),$$ forms a set containing a witness for every $x\in w$. Hence \Collection holds.
\end{proof}

\begin{prop}\label{prop:stage-axiom-relations}
Over \STU, the following implication diagram holds.
\begin{center}
\begin{tikzpicture}[scale=0.8,transform shape]
\node (rp) at (0,3) {$\StageRP$};
\node (rpminus) at (-2,1.5) {$\StageRPMinus$};
\node (unboundedplus) at (2,1.5) {$\StageUnboundedPlus$};
\node (directed) at (-2,0) {$\StageDirected$};
\node (unbounded) at (2,0) {$\StageUnbounded$};
\draw[->] (rp) -- (rpminus);
\draw[->] (rp) -- (unboundedplus);
\draw[->] (rpminus) -- (directed);
\draw[->] (unboundedplus) -- (unbounded);
\end{tikzpicture}

\refstepcounter{implicationdiagram}
\label{diagram:stu-implications}
Implication Diagram \theimplicationdiagram
\end{center}
\end{prop}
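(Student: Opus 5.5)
We prove the four arrows separately, each by a direct argument in \STU using the relevant principle with a suitably chosen formula.

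\emph{\StageRP implies \StageRPMinus.} Suppose $\varphi(\vec{x},\vec{\mathbf{r}})$. By \Staging, fix stages $\vec{\mathbf{t}}$ at which the $\vec{x}$ occur, and apply \StageRP to $\varphi$ with parameters $\vec{\mathbf{t}},\vec{\mathbf{r}}$. This gives a stage $\mathbf{s}$ above all of them. Each $x_i$ occurs at some $\mathbf{t}_i<\mathbf{s}$, so $x_i\prec\mathbf{s}$ and hence $x_i\avail\mathbf{s}$. Also $\vec{\mathbf{r}}<\mathbf{s}$, so the absoluteness clause yields $\varphi^{\mathbf{s}}(\vec{x},\vec{\mathbf{r}})$.

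\emph{\StageRPMinus implies \StageDirected.} Given $\mathbf{r},\mathbf{s}$, apply \StageRPMinus to a trivially true formula with stage parameters $\mathbf{r},\mathbf{s}$, for instance $\mathbf{r}=\mathbf{r}\land\mathbf{s}=\mathbf{s}$. The resulting stage $\mathbf{t}$ satisfies $\mathbf{r},\mathbf{s}<\mathbf{t}$.

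\emph{\StageUnboundedPlus implies \StageUnbounded.} Suppose $\forall x\in w\exists!y\,\varphi(x,y)$. By \Staging, every $x\in w$ has some $\mathbf{r}$ with $\exists y(\varphi(x,y)\land y\form\mathbf{r})$. \StageUnboundedPlus then gives a stage $\mathbf{s}$ such that for every $x\in w$ the value $y$ occurs at some $\mathbf{r}<\mathbf{s}$. Hence $y\prec\mathbf{s}$, so $y\avail\mathbf{s}$.

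\emph{\StageRP implies \StageUnboundedPlus.} This is the main obstacle. Suppose $\forall x\in w\exists\mathbf{r}\,\varphi(x,\mathbf{r})$, where $\varphi$ may have further parameters $\vec{p}$ (objects) and $\vec{\mathbf{q}}$ (stages). Fix a stage $\mathbf{t}_0$ at which $w$ occurs, and stages at which the $\vec{p}$ occur. Apply \StageRP to the formula
\[
\psi(w,\vec{p},\vec{\mathbf{q}})\equiv\forall x\in w\,\exists\mathbf{r}\,\varphi(x,\mathbf{r},\vec{p},\vec{\mathbf{q}}),
\]
and also to its subformula $\exists\mathbf{r}\,\varphi$. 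The absoluteness clause is stated for a single formula, so we apply it to the conjunction-style combination; alternatively, reflecting only $\psi$ suffices. We choose $\vec{\mathbf{t}}$ to include $\mathbf{t}_0$, the stages of the $\vec{p}$, and $\vec{\mathbf{q}}$. The resulting $\mathbf{s}$ makes $w,\vec{p}$ available at $\mathbf{s}$ and $\vec{\mathbf{q}}<\mathbf{s}$, so $\psi^{\mathbf{s}}$ holds. Now $\psi^{\mathbf{s}}$ reads $\forall x(x\avail\mathbf{s}\land x\in w\to\exists\mathbf{r}<\mathbf{s}\,\varphi^{\mathbf{s}}(x,\mathbf{r}))$. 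Every $x\in w$ is available at $\mathbf{s}$ by \Priority and persistence, so each $x\in w$ has a witness $\mathbf{r}<\mathbf{s}$ with $\varphi^{\mathbf{s}}(x,\mathbf{r})$.

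The difficulty is that we need $\varphi$ itself, not $\varphi^{\mathbf{s}}$. We resolve this by reflecting $\varphi$ itself, not only $\psi$, so that absoluteness for $\varphi$ holds at $\mathbf{s}$. To obtain a single $\mathbf{s}$ reflecting both $\varphi$ and $\psi$, we apply \StageRP to a formula $\theta(u,x,\mathbf{r},\ldots)$ that codes both via a case distinction. One way is to take $\theta\equiv(\varphi\land\chi)\lor(\psi\land\neg\chi)$ with a selector $\chi$ given by, say, $u=x$. A cleaner option is to note that absoluteness of $\varphi$ for all $x\avail\mathbf{s}$ and $\mathbf{r}<\mathbf{s}$ already suffices, provided we also know $\exists\mathbf{r}<\mathbf{s}\,\varphi^{\mathbf{s}}(x,\mathbf{r})$. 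That fact follows from the absoluteness of $\exists\mathbf{r}\,\varphi$, which is exactly what the standard Tarski–Vaught-style combined reflection delivers.

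Concretely, we reflect the single formula $\exists\mathbf{r}\,\varphi(x,\mathbf{r})\land\varphi(x,\mathbf{r}')$ in the selector style so that one $\mathbf{s}$ is absolute for both subformulas. For $x\in w$ we then get $(\exists\mathbf{r}\,\varphi)^{\mathbf{s}}$, hence a witness $\mathbf{r}<\mathbf{s}$ with $\varphi^{\mathbf{s}}(x,\mathbf{r})$. Absoluteness of $\varphi$ at $\mathbf{s}$ turns this into $\varphi(x,\mathbf{r})$, which is what \StageUnboundedPlus requires.
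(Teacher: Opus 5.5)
Your proposal follows the paper's proof. The first three arrows are argued exactly as there. For \StageRP $\to$ \StageUnboundedPlus you use the same simultaneous-reflection trick: make $w$ available at $\mathbf{s}$, reflect the $\exists\mathbf{r}$-statement together with $\varphi$ itself, and then use absoluteness of $\varphi$ to turn a witness $\mathbf{r}<\mathbf{s}$ for $\varphi^{\mathbf{s}}$ into a witness for $\varphi$.

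Two things to fix in the write-up. Drop the aside that reflecting only $\psi$ suffices; it gives only $\varphi^{\mathbf{s}}$, as you note yourself in the next paragraph. Also, reflecting a literal conjunction does not give absoluteness of each conjunct, so state the case-split explicitly. The paper uses $(u=\emptyset\land\varphi)\lor(u=\{\emptyset\}\land\psi)$, with $\emptyset$ and $\{\emptyset\}$ occurring below $\mathbf{s}$. Your selector $u=x$ also works, once you observe that two distinct objects (e.g.\ $w$ and any $x\in w$) are available at $\mathbf{s}$.
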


\begin{proof}
(\StageUnboundedPlus $\to$ \StageUnbounded) Suppose $\forall x\in w\exists!y\,\varphi(x,y).$
By \Staging, for every $x\in w$ there is a stage at which its $\varphi$-image occurs. By \StageUnboundedPlus, there is a stage $\mathbf{s}$ such that for every $x\in w$, its $\varphi$-image occurs at some $\mathbf{r}<\mathbf{s}$ and is therefore available at $\mathbf{s}$. This is \StageUnbounded.

(\StageRPMinus $\to$ \StageDirected) Given stages $\mathbf{r}$ and $\mathbf{t}$, apply \StageRPMinus to the true formula $\mathbf{r}=\mathbf{r}\land\mathbf{t}=\mathbf{t}.$ The resulting stage $\mathbf{s}$ is such that $\mathbf{r},\mathbf{t}<\mathbf{s}$.

(\StageRP $\to$ \StageRPMinus) Suppose $\varphi(\vec{x},\vec{\mathbf r})$ holds. There are stages $\vec{\mathbf t}$ at which the objects $\vec{x}$ occur. Apply \StageRP by reflecting $\varphi$ with a stage $\mathbf{s}$ above $\vec{\mathbf t}$ and $\vec{\mathbf r}$. Then $\vec{x}\avail\mathbf{s}$ and $\vec{\mathbf r}<\mathbf{s}$; so $\varphi^{\mathbf{s}}(\vec{x},\vec{\mathbf r})$.

(\StageRP $\to$ \StageUnboundedPlus) We first review a trick of simultaneously reflecting two formulas $\varphi$ and $\psi$ with the same stage. By \Specification and \StageDirected, $\emptyset$ and $\{\emptyset\}$ both exist and occur at some stage $\mathbf{t}$. Define $\theta(u,\vec{x},\vec{\mathbf r})$ as
\begin{equation*}
(u=\emptyset\land\varphi(\vec{x},\vec{\mathbf r}))\lor
(u=\{\emptyset\} \land\psi(\vec{x},\vec{\mathbf r})).
\end{equation*}
By \StageRP, there is a stage $\textbf{s} > \textbf{t}$ that reflects  $\theta$. Since both $\emptyset$ and $\{\emptyset\}$ are available at $\mathbf{s}$, it follows that both $\varphi$ and $\psi$ are absolute between $\mathbf{s}$ and the universe.

Now suppose $\forall x\in w\exists\mathbf{r}\,\varphi(x,\mathbf{r})$ and let $\chi(w)$ abbreviate this assertion. Let \textbf{t} be a stage at which $w$ occurs. By simultaneous reflection, there is a stage $\mathbf{s} > \textbf{t}$ which reflects both $\chi$ and $\varphi$. $w$ and every member of $w$ is therefore available at $\mathbf{s}$. Since $\chi (w)$ holds, we have $\chi(w)^{\mathbf{s}}$ so for every $x\in w$, there is a stage $\mathbf{r}<\mathbf{s}$ such that $\varphi^{\mathbf{s}}(x,\mathbf{r})$ and hence $\varphi(x,\mathbf{r})$. Therefore, \StageUnboundedPlus holds.
\end{proof}

\noindent A natural question is whether \cref{diagram:stu-implications} is complete: is there any implication that we have not proved? This leads to philosophical questions, e.g., does \CompleteReflection express a conception of set that is stronger than the other conceptions expressed by other axioms? We will answer this question through studying the counterpart of these axioms in level theory, to which we now turn.

\subsection{Extending \LTU}
By replacing stage with level in the first three axioms, we have their counterparts in level theory.
\begin{itemize}
\item [] (\LevelDirected) \(\Lev(r)\land\Lev(s)\rightarrow \exists t(\Lev(t)\land r\in t\land s\in t)\).\\

\item [] {\small (\LevelUnbounded) \(\forall x\in w\exists!y\,\varphi(x,y)\rightarrow \exists s(\Lev(s)\land \forall x\in w\exists y\in s\,\varphi(x,y))\).}\\

\item [] {\small (\LevelUnboundedPlus) \(\forall x\in w\exists r(\Lev(r)\land\varphi(x,r))\rightarrow \exists s(\Lev(s)\land\forall x\in w\exists r\in s(\Lev(r)\land\varphi(x,r)))\).}
\end{itemize}

\noindent Reflection needs more care. It is tempting to treat initial fragments of the universe as levels, which leads to the following partial reflection principle.
\begin{itemize}
\item [] (\LevelRPMinus) \(\varphi(\vec{x})\rightarrow \exists s(\Lev(s)\land\vec{x}\in s\land\varphi^s(\vec{x}))\),
\end{itemize}
where $\varphi^s$ is the standard $\in$-relativization. But by \cref{lem:history-kernel} for every level $s$ that has a set member, $\ker(s) \in s$; let $A = \ker(s)$ and we have
\begin{itemize}
\item []  $A \in s \land \forall a \in s(\Ur(a) \to a \in A)$
\end{itemize}
\noindent which implies (\UrSet)$^s$. Thus, \LevelRPMinus immediately implies \UrSet since $\neg\UrSet \land \exists x\,\Set(x)$ cannot be reflected by any level. This is no surprise: every level is fundamentally different from the universe whenever the urelements do not form a set. Instead, initial fragments of the universe should be simply understood as sets that are closed under membership and set-formation, i.e., supertransitive sets. Accordingly, \PartialReflection and \CompleteReflection are better formulated as follows.
\begin{itemize}
\item [] (\RPSim) \(\varphi(\vec{x})\rightarrow \exists t( t\text{ is supertransitive}\land\vec{x}\in t\land\varphi^t(\vec{x}))\).\\

\item [] (\RP) \(\forall \vec{w} \exists t( \vec{w} \in t\land t\text{ is supertransitive}\land\forall\vec{x}\in t(\varphi(\vec{x})\leftrightarrow\varphi^t(\vec{x})))\).\footnote{We note that \RP has many different equivalent formulations over \LTU; for example, the set $t$ in \RP needs only to be transitive. There is also a weaker version of \RPSim, denoted by \RPMinus, which requires the reflecting set $t$ to be merely transitive. Since \RPMinus plays no role in what follows, we omit it from the discussion.}
\end{itemize}

\begin{prop}\label{prop:LTUImplication}
The following implication diagram holds over \LTU.

\begin{center}
\begin{tikzpicture}[scale=0.8,transform shape]
\node (rp) at (0,3) {$\RP$};
\node (rpsim) at (-2,1.5) {$\RPSim$};
\node (unboundedplus) at (2,1.5) {$\LevelUnboundedPlus$};
\node (collection) at (5.5,1.5) {$\Collection$};
\node (directed) at (-2,0) {$\LevelDirected$};
\node (unbounded) at (2,0) {$\LevelUnbounded$};
\node (replacement) at (5.5,0) {$\Replacement$};
\draw[->] (rp) -- (rpsim);
\draw[->] (rp) -- (unboundedplus);
\draw[->] (rpsim) -- (directed);
\draw[->] (unboundedplus) -- (unbounded);
\draw[<->] (unboundedplus) -- (collection);
\draw[<->] (unbounded) -- (replacement);
\end{tikzpicture}

\refstepcounter{implicationdiagram}
\label{diagram:ltu-implications}
Implication Diagram \theimplicationdiagram
\end{center}

\end{prop}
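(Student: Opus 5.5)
The diagram has six arrows, and I would verify each one separately inside \LTU. Two facts do most of the work. First, \Stratification sends a set $v$ to a level $s$ with $v\subseteq s$, so every member of $v$ is in $s$. Second, levels are potent, so if $y\subseteq r\in s$ with $s$ a level, then $y\in s$.

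\emph{Unboundedness versus Replacement and Collection.}
\begin{itemize}
\item For \Replacement $\to$ \LevelUnbounded: given a functional $\varphi$ on $w$, \Replacement yields the image set $v$. \Stratification then gives a level $s\supseteq v$, and $s$ contains every value.
\item Conversely, if a level $s$ contains all the values, \Separation on $s$ gives the image. So \LevelUnbounded $\to$ \Replacement.
\item For \Collection $\to$ \LevelUnboundedPlus: collect witnessing levels into a set $v$. Using \Separation, cut $v$ down to $v'=\{r\in v\mid \Lev(r)\land\exists x\in w\,\varphi(x,r)\}$. A level $s\supseteq v'$ from \Stratification does the job.
\item For \LevelUnboundedPlus $\to$ \Collection: given $\forall x\in w\exists y\,\varphi(x,y)$, pass to $\psi(x,r)$, meaning that $r$ is a level and there is $y$ with $\varphi(x,y)$ and $y\in r$. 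This $\psi$ has witnesses: a urelement $y$ is in some level by \Stratification; a set $y$ is a subset of some level $u$, and then $y\in u'$ for any level $u'\ni u$ by potency. But such a $u'$ is exactly what is not yet available. So instead I use the form $y\subseteq r\lor y\in r$. Unboundedness then gives a level $s$ and, for each $x$, a level $r\in s$ with a witness $y\subseteq r$ or $y\in r$. In either case $y\in s$, by potency or by transitivity.
\item Finally, \Separation on $s$ with the formula $\exists x\in w\,\varphi(x,y)$ gives the collecting set.
\item The same trick (replace ``$y\in r$'' by ``$y\subseteq r\lor y\in r$'') gives \LevelUnboundedPlus $\to$ \LevelUnbounded.
\end{itemize}

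\emph{Reflection arrows.}
\begin{itemize}
\item \RP $\to$ \RPSim: apply \RP with parameters $\vec w=\vec x$.
\item \RPSim $\to$ \LevelDirected: given levels $r,s$, reflect a trivially true formula with parameters $r,s$. This gives a supertransitive $t$ with $r,s\in t$. Then $\{r,s\}$ is a set by \Separation on $t$. \Stratification gives a level $u\supseteq\{r,s\}$, so $r,s\in u$.
\item \RP $\to$ \LevelUnboundedPlus: this follows the stage argument in \cref{prop:stage-axiom-relations}. Let $\chi(w)$ be $\forall x\in w\exists r(\Lev(r)\land\varphi(x,r))$. I would reflect $\chi$, $\varphi$, and the formula $\Lev(r)$ simultaneously, with $w$ a member of a supertransitive $t$.
\item For the simultaneous reflection I would code the formulas by parameters $\emptyset,\{\emptyset\},\{\{\emptyset\}\}$. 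These exist: $\emptyset$ by \Separation, and each subsequent one because it is a subset of a level containing the previous set, hence a member of a suitable level by potency.
\item Since $t$ is transitive and $w\in t$, each $x\in w$ is in $t$. So $\chi^t(w)$ yields, for each $x\in w$, some $r\in t$ with $\Lev^t(r)$ and $\varphi^t(x,r)$. By absoluteness these give $\Lev(r)$ and $\varphi(x,r)$.
\item Let $v$ be the set of such $r\in t$, obtained by \Separation. \Stratification gives a level $s\supseteq v$, as required.
\end{itemize}

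The main obstacle is the bookkeeping in the last arrow. The simultaneous-reflection coding must use only objects available in \LTU, which lacks \Pairing outright. The codes must land in $t$, which I would secure by putting them among the parameters $\vec w$. Apart from this, every step is a direct use of \Separation, \Stratification, and potency of levels (\cref{lem:levels-potent-transitive}).
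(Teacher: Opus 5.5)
Your proposal is correct and follows essentially the same route as the paper. The \Replacement, \Collection and unboundedness arrows are the level-theoretic transcriptions of the stage arguments: your disjunction $y\subseteq r\lor y\in r$ plays the role of the translated $y\form\mathbf{r}$. Likewise, \RP $\to$ \LevelUnboundedPlus is the paper's simultaneous-reflection argument, followed by taking a level that covers the relevant part of the reflecting set.

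Two points need tightening. First, getting $\emptyset$ and $\{\emptyset\}$ into levels as members, so that $\{\emptyset\}$ and $\{\{\emptyset\}\}$ exist, requires \RP, for example via your \RPSim $\to$ \LevelDirected arrow or via \RP plus \Stratification; \LTU alone does not supply such levels. Second, any parameters of $\varphi$ must also be included among $\vec w$.
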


\begin{proof}
The arguments for \RP $\to$ \RPSim, \RPSim $\to$ \LevelDirected, \LevelUnboundedPlus $\to$ \LevelUnbounded, \LevelUnbounded $\leftrightarrow$ \Replacement and \LevelUnboundedPlus $\leftrightarrow$ \Collection are completely analogous to \cref{prop:stage-axiom-relations}, so we omit the proofs.

The argument for \RP $\to$  \LevelUnboundedPlus is also similar. As before, one first observes that \RP gives finite simultaneous reflection. Now suppose that $\forall x\in w\exists r(\Lev(r)\land\varphi(x,r)),$ and let $\chi(w)$ abbreviate this assertion. By simultaneous reflection, there is a transitive set $t$ containing $w$ which reflects both $\chi$ and the formula $\Lev(r)\land\varphi(x,r)$. Since $\chi (w)$ holds, we have $\chi(w)^t$ so for every $x\in w$ there is some $r\in t$ such that $(\Lev(r)\land\varphi(x,r))^t$ and hence $\Lev(r)\land\varphi(x,r)$. Let $s$ be a level with $t\subseteq s$. Then for every $x\in w$, there is a level $r\in s$ such that $\varphi(x,r)$, so $s$ witnesses \LevelUnboundedPlus. \end{proof}

\begin{theorem}\label{thm:ltu-implication-diagram-complete}
\Cref{diagram:ltu-implications} is complete over \LTU.
\end{theorem}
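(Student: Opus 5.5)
The task is to produce, for every arrow absent from \cref{diagram:ltu-implications}, a model of \LTU in which the antecedent holds and the consequent fails. By \cref{prop:LTUImplication}, every implication not drawn follows from a small set of non-implications. I plan to establish the following.
\begin{enumerate}
\item \Collection (equivalently \LevelUnboundedPlus) together with \Replacement does not imply \LevelDirected.
\item \RPSim does not imply \Replacement (equivalently \LevelUnbounded).
\item \LevelUnbounded does not imply \LevelUnboundedPlus.
\item \LevelDirected together with \LevelUnboundedPlus does not imply \RPSim.
\item \RPSim together with \LevelUnboundedPlus does not imply \RP.
\end{enumerate}
Given these, any missing arrow is refuted by one of the five models. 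For each model I will certify \LTU using \cref{thm:unions-of-hierarchies}, which the paper cites for unions of hierarchies, or \cref{thm:levels-exactly-hierarchies} for models of \ZFU. I then compute levels as hierarchies.

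For (1), I take an infinite set $U$ of urelements and let the universe consist of $U$ together with all finite subsets of $U$.
\begin{itemize}
\item Every set is a set of urelements, so every set is an initial level and \Stratification holds trivially.
\item \Separation and \Extensionality are immediate.
\item \Replacement and \Collection hold because any image of a finite set is finite. Since $\emptyset$ is not a member of anything, every image consists of urelements.
\item \LevelDirected fails because no level has a level as a member.
\end{itemize}
If a less degenerate witness is wanted, I would look for a union of hierarchies over pairwise disjoint kernels of the same height, where no function can force a mixed image. The point to check there is that the presence of $\{\emptyset\}$ makes \Replacement yield pairs, and hence \LevelDirected. The finite model avoids this precisely because $\emptyset$ is never a member.

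For (2), I take $\kappa$ inaccessible, a strictly increasing sequence $A_0\subsetneq A_1\subsetneq\cdots$ of sets of urelements with $A=\bigcup_n A_n$, and the model $\bigcup_n V_\kappa(A_n)$.
\begin{itemize}
\item \Replacement fails, since $n\mapsto a_n$ for chosen $a_n\in A_{n+1}\setminus A_n$ has no set as image.
\item For \RPSim, given a true $\varphi(\vec x)$ with $\vec x\in V_\kappa(A_n)$, I would use the automorphisms permuting the urelements of $A\setminus A_n$ to show that $\varphi$ is uniformly true with parameters $\vec x$. I would then reflect inside a single $V_\kappa(A_m)$, or inside a suitable $V_\alpha(A_m)$ with $\alpha<\kappa$, using \ZFU-reflection on that piece.
\end{itemize}
The delicate step in (2) is exactly this: checking that truth in the whole union is witnessed inside one supertransitive set. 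A homogeneity or elementary-chain argument ($V_\kappa(A_n)\prec V_\kappa(A_{n+1})$ via permutations) is what I would try first.

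For (3), (4) and (5) I would work in models of \ZFU with a proper class of urelements, imported from Yao's earlier work.
\begin{itemize}
\item For (3) I would use a model of \Replacement with \Collection failing.
\item For (4) I would use a model of \ZFU in which some instance of \RPSim fails. The candidate instance is a formula expressing that every initial level is smaller than some fixed parameter-free bound. Such a formula cannot be reflected to a supertransitive set, since every such set satisfies \UrSet internally.
\item For (5) I would use a model of \ZFU together with \RPSim and \Collection in which \RP fails.
\end{itemize}
I expect (5) to be the main obstacle, and I would defer it to the equivalence machinery of \cref{sec:Equivalence}, translating to the stage side. The approach is to build the model so that partial reflection holds through automorphisms while some $\Pi_2$ statement about the sizes of kernels is true but not absolute to any single supertransitive set. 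The difficulty is that \Collection already gives a great deal of reflection, so the separating formula must involve unboundedly many kernels.
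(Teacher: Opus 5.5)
Your reduction to a few separating models is sound. In fact (5) is redundant: \RP implies both \LevelDirected and \Replacement, so models for (1) and (2) already refute \LevelUnboundedPlus $\to$ \RP and \RPSim $\to$ \RP. However, the two models you actually construct do not work.

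In (1), $\emptyset$ is an object of your universe. On $w=\{a\}$ the definable map $x\mapsto\emptyset$ has image $\{\emptyset\}$, which is not in the model, so \Replacement and \Collection both fail there. That $\emptyset$ is never a member is the cause of the failure, not a remedy. The paper instead uses the model whose only object is $\emptyset$. It has no urelements, so every instance of \Collection has $w=\emptyset$ and holds trivially. Its only level, $\emptyset$, has no members, so \LevelDirected fails.

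In (2), $n\mapsto a_n$ is an external choice, not a function definable in the model, and the \Replacement scheme concerns only definable functions. In fact no definable function has an image with unbounded kernel:
\begin{itemize}
\item For $a,b\notin A_j$, the transposition $(a\ b)$ maps every $A_n$ into some $A_m$, so it induces an automorphism of $\bigcup_n V_\kappa(A_n)$.
\item Let $F$ be defined from parameters in $V_\kappa(A_j)$, let $x\in w\in V_\kappa(A_j)$, and suppose $a\in\ker(F(x))\setminus A_j$. Take $i$ with $\ker(F(x))\subseteq A_i$ and pick $b\in A\setminus A_i$. Then $(a\ b)$ fixes $F$ and $x$ but moves $F(x)$, a contradiction.
\end{itemize}
So $F[w]\subseteq V_\kappa(A_j)$. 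When the $A_n$ have size $<\kappa$, your model therefore satisfies the whole \Replacement scheme. With finite $A_n$ it is just the model $M$ of the Appendix taken over a \ZFCU background, and the support argument there applies verbatim to definable functions. Unions along a chain of kernels therefore cannot separate \RPSim from \Replacement. The paper instead cites the L\'evy--Vaught model of Zermelo set theory with \RPSim but without \Replacement. This is a pure model, and it satisfies \Stratification because every set lies in some $V_\alpha$.

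Your (3) is fine as a citation. For (3) and (4) the paper uses Glazer and Yao's theorem that the diagram \RP $\to$ \RPSim, \RP $\to$ \Collection is complete over \ZFU. A model of $\ZFU+\RPSim+\neg\Collection$ gives (3), and a model of $\ZFU+\Collection+\neg\RPSim$ gives (4); both satisfy \LevelDirected and \Replacement.

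Your own sketch for (4) rests on a false premise. Supertransitive sets need not satisfy \UrSet internally: $\bigcup_n V_\omega(A_n)$ for a strictly increasing chain $A_n$ is a counterexample. This is exactly why \RPSim is formulated with supertransitive sets rather than levels. Likewise, the stage--level equivalences of \cref{sec:Equivalence} only transfer consequences between theories; they cannot supply the countermodel you want for (5).
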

\begin{proof}
The model whose only object is the empty set satisfies \LTU + \Collection but not \LevelDirected. L\'evy and Vaught \cite[Corollary 8]{LevyVaught1961PartialReflection} show that \RPSim does not imply \Replacement over Zermelo set theory. Their model satisfies $\forall x \exists \alpha x \in V_\alpha$, which implies \Stratification, so it is a model of \LTU. Finally, the main results in Glazer and Yao \cite[Section 3]{GlazerYao2026ReflectionZFU} show that the following diagram is complete over \ZFU.
\begin{center}
\begin{tikzpicture}[scale=0.8,transform shape]
\node (rp) at (0,3) {$\RP$};
\node (rpsim) at (-2,1.5) {$\RPSim$};
\node (collection) at (2,1.5) {$\Collection$};
\draw[->] (rp) -- (rpsim);
\draw[->] (rp) -- (collection);
\end{tikzpicture}
\end{center}
As \ZFU proves \LTU + \LevelDirected + \Replacement, it follows that \cref{diagram:ltu-implications} is complete over \LTU.\end{proof}

\section{Set-Theoretic Equivalence Regained}\label{sec:Equivalence}

We have seen that \LTU and \STU are not set-theoretically equivalent: \STU does not prove the key axiom \Stratification (\cref{thm:stu-bizarre-model}). We argued that this non-equivalence result is due to the non-robustness of the Basic Iterative Story. Now we consider whether adding the further principles introduced in the previous section can lead to set-theoretic equivalence. When corresponding stage- and level-theoretic extensions have the same
set-theoretic consequences, this provides evidence that the added principle resolves an
ambiguity between the two formalizations of the iterative process and therefore makes the conception of set more robust. We start by observing that directedness is no such principle.

\begin{theorem}\label{thm:stage-directed-not-ltu}
\STU + \StageDirected does not prove either \Stratification or \LevelDirected.
\end{theorem}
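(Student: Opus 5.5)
The plan is to reuse the structure $\mathcal{M}$ built in the proof of \cref{thm:stu-bizarre-model}, with no modifications. That proof already shows $\mathcal{M}\models\STU$ and $\mathcal{M}\not\models\Stratification$. So it only remains to check that $\mathcal{M}$ satisfies \StageDirected and violates \LevelDirected.

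\StageDirected is immediate. The stages of $\mathcal{M}$ are indexed by the limit ordinal $\omega+\omega$, ordered by the order on indices. Given stages $\mathbf{s}_\alpha$ and $\mathbf{s}_\beta$, the stage $\mathbf{s}_{\max(\alpha,\beta)+1}$ is strictly above both. Hence $\mathcal{M}\models\STU+\StageDirected+\neg\Stratification$.

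For the failure of \LevelDirected, I take $r=A$ and $s=V_\omega(\{a_0\})$.
\begin{enumerate}
\item Both are objects of $\mathcal{M}$. $A$ is formed at $\mathbf{s}_\omega$, since all $a_n$ are available there. The argument of \cref{claim:countermodel-x-formed} shows $V_\omega(\{a_0\})\subseteq\check{\mathbf{s}}_\omega^{\mathcal{M}}$, so $V_\omega(\{a_0\})$ is formed at $\mathbf{s}_\omega$.
\item Both are levels of $\mathcal{M}$. $A$ is an initial level. For $V_\omega(\{a_0\})$ and the $V_m(A)$, I first note that the object domain $M$ is transitive and contains every subset of its sets that lies in some slice. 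Hence kernels, potentiations and the history $h_\alpha(B)$ from \cref{thm:levels-exactly-hierarchies} are computed in $\mathcal{M}$ exactly as in the background universe. So the hierarchies $V_\alpha(B)$ that lie in $M$ are levels of $\mathcal{M}$. This covers $V_\omega(\{a_0\})$ and every $V_m(A)$ with $m<\omega$; the latter are formed at $\mathbf{s}_{\omega+m}$ by \cref{claim:no-full-limit-stage}.
\item No level $t$ of $\mathcal{M}$ has both $r,s\in t$. Suppose $t$ were such a level. Since $A\in t$, we get $\ker(t)=A$. Since $\mathcal{M}\models\LTUMinus$, \cref{lem:same-kernel-levels-comparable} applies inside $\mathcal{M}$, so $t$ is comparable with every $V_m(A)$.
\begin{enumerate}
\item If $V_m(A)\in t$ for all $m$, then $V_\omega(A)\subseteq t$. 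By \Staging, $t$ is formed at some stage, and then $V_\omega(A)$ is formed at that same stage. This contradicts \cref{claim:no-full-limit-stage}.
\item Otherwise $t=V_m(A)$ or $t\in V_m(A)$ for some $m$. In either case $t\subseteq V_m(A)$ by transitivity. But $V_\omega(\{a_0\})$ has rank $\omega$, so it cannot belong to $V_m(A)$, whose set members all have finite rank. This is a contradiction.
\end{enumerate}
\end{enumerate}
This is exactly the same comparability argument used at the end of the proof of \cref{thm:stu-bizarre-model}.

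The main obstacle is step 2: verifying that level-hood is absolute between $\mathcal{M}$ and the background universe for these specific sets. The rest is a direct reuse of earlier results. I would handle step 2 by checking the three history clauses for $h_\alpha(B)$ directly in $\mathcal{M}$, using that $M$ is transitive and that each $V_\beta(B)$ in question lies in $M$. One could in fact avoid part of this: the contradiction in step 3 only needs that any level of $\mathcal{M}$ containing $A$ is comparable with the $V_m(A)$, and that $V_m(A)$ and $V_\omega(\{a_0\})$ are levels in $\mathcal{M}$.
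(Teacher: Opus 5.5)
Your proposal is correct and follows the paper's proof essentially step for step. It uses the same model $\mathcal{M}$, gets \StageDirected from the order type $\omega+\omega$, and refutes \LevelDirected by the same comparability argument via \cref{lem:same-kernel-levels-comparable} and \cref{claim:no-full-limit-stage}. The differences are cosmetic: you take $r=A$ where the paper takes $r=V_1(A)$ (either choice forces $\ker(t)=A$), and you explicitly check that levelhood is absolute for $M$, using that $M$ is transitive and closed under subsets of its sets, a point the paper leaves implicit.
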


\begin{proof}
Consider the model $\mathcal{M}$ in \cref{thm:stu-bizarre-model}. We already know that $\mathcal{M}\models\STU$ and that \Stratification fails in $\mathcal{M}$. The model clearly satisfies \StageDirected since the stage order has order type $\omega+\omega$.

We show that \LevelDirected fails. Recall that $A_1=\{a_0\}$ and $A = \{a_n \mid n <\omega\}$. Consider $r=V_1(A)\quad\text{and}$ $s=V_\omega(A_1)$, which are both levels in $\mathcal{M}$. Suppose  \textit{for reductio} that there is a level $t$ in $\mathcal{M}$ with  $r,s\in t$. Then $\ker(t)=A$ and so $t$ is comparable with $V_m(A)$ for every $m < \omega$ by \cref{lem:same-kernel-levels-comparable}. $t=V_m(A)$ and $t\in V_m(A)$ are impossible, since $V_\omega(A_1)\in t$ but $V_\omega(A_1)\notin V_m(A)$. Therefore, $V_m(A)\in t$ for every $m<\omega$ so we have $V_\omega(A) \subseteq t$. $\mathcal{M}$ would therefore form $V_\omega(A)$ at some stage, contrary to \cref{claim:no-full-limit-stage}.\end{proof}

\subsection{Equivalence via Unboundedness}
We observed after \cref{thm:stu-bizarre-model} that \LTU seems too weak because the hierarchies generated by different initial levels might be uneven. This is against a very mild maximality conception of set, i.e., one iterative process should not stop if there is another one that has gone further. It turns out that such a principle, what we call \LevelExtension, is the key to achieving set-theoretic equivalence.

\begin{itemize}
\item [] (\LevelExtension) For every level $s$ and every set $A$ of urelements such that $\ker(s)\subseteq A$, there is a level $t$ such that $s\subseteq t$ and $\ker(t)=A$.
\end{itemize}

\noindent That is, every level can be extended to a larger level with more urelements.
\begin{lemma}[\LTUMinus]\label{lem:level-extension}
\Replacement implies \LevelExtension.
\end{lemma}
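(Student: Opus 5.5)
We argue by contradiction using Potent Induction on the levels with kernel $\ker(s)$, and build the required level as $\pot(h)$ for a history $h$ assembled with \Replacement and checked with \cref{lem:fixed-kernel-history}.

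Fix a set $A$ of urelements and put $B=\ker(s)\subseteq A$. Suppose, for contradiction, that some level with kernel $B$ has no extension to a level with kernel $A$; call such a level \emph{bad}. Every level is potent, so Potent Induction (\cref{lem:potent-minimality}) gives an $\in$-minimal bad level $r$. If $r$ has no set members, then $r=B$ by \cref{lem:history-kernel}. In that case $A$ itself, being a set of urelements, is a level with kernel $A$ and $B\subseteq A$, so $r$ is not bad. Hence $\ker(r)\in r$, and $L_r=\{q\in r\mid\Lev(q)\land\ker(q)=B\}$ is non-empty. By the minimality of $r$, every $q\in L_r$ extends to some level with kernel $A$.

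Among the levels with kernel $A$ that include a given $q$ there is an $\in$-least one, by \cref{lem:same-kernel-levels-comparable} (or Potent Induction again); call it $t_q$. This gives a definable functional relation on the set $L_r$, so \Replacement yields the set $T=\{t_q\mid q\in L_r\}$. Using \StratificationMinus, fix a supertransitive set containing $T$ and $A$, and apply \Separation to obtain
$$h=A\cup\{p\mid \Lev(p)\land\ker(p)=A\land\exists t\in T(p\in t\lor p=t)\}.$$

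Next we check the three hypotheses of \cref{lem:fixed-kernel-history}.
\begin{enumerate}
\item The urelements of $h$ are exactly those of $A$: the levels in $h$ are sets, and $A\subseteq h$ by construction.
\item Every set member of $h$ is a level with kernel $A$, by the definition of $h$.
\item Let $p'\in p\in h$ with $p'$ a level of kernel $A$. Then $p\in t$ or $p=t$ for some $t\in T$, and since $t$ is transitive, $p'\in t$. Hence $p'\in h$.
\end{enumerate}
So $h$ is a history with $\ker(h)=A$, and $u=\pot(h)$ is a level with $\ker(u)=\ker(h)=A$ by \cref{lem:pot-kernel}.

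It remains to show $r\subseteq u$, which contradicts the badness of $r$.
\begin{itemize}
\item If $x\in r$ is a urelement, then $x\in B\subseteq A=\ker(h)\subseteq u$.
\item If $x\in r$ is a set, \cref{lem:level-decomposition} gives $q\in L_r$ with $x\subseteq q$. Then $q\subseteq t_q$, so $x\subseteq t_q\in h$, and therefore $x\in\pot(h)=u$.
\end{itemize}
Thus $r$ is not bad, a contradiction; in particular $s$ has the required extension.

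I expect the main obstacle to be making the map $q\mapsto t_q$ genuinely functional so that \Replacement applies. This is why we take the $\in$-least extension rather than an arbitrary one; its existence and uniqueness rest on the linear ordering of same-kernel levels. The rest is verifying the closure condition (3) of \cref{lem:fixed-kernel-history}, which is built into the definition of $h$ and uses only the transitivity of levels.
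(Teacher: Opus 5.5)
Your proof is correct and follows the paper's argument essentially step for step. You take an $\in$-minimal counterexample via Potent Induction, collect the unique $\in$-least kernel-$A$ extensions $t_q$ by \Replacement, close downward and add $A$ to get a history via \cref{lem:fixed-kernel-history}, and use \cref{lem:level-decomposition} to get $r\subseteq\pot(h)$.

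The differences from the paper are cosmetic. You minimize over levels with kernel exactly $\ker(s)$ rather than over all levels with kernel included in $A$. Your separate treatment of the case $r=\ker(r)$ is unnecessary but harmless, and it conveniently guarantees $T\neq\emptyset$, so any supertransitive set including $T$ automatically includes $A$.
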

\begin{proof}
Work in \LTUMinus + \Replacement. Fix a set $A$ of urelements, and suppose that some level whose kernel is included in $A$ has no extension with kernel $A$. By Potent Induction, fix such a level $s$ that is $\in$-minimal. Let
\begin{equation*}
S=\{r\in s\mid \textup{Lev}(r)\land \ker(r)=\ker(s)\}.
\end{equation*}
\noindent For each $r\in S$, the minimality of $s$ implies that there is some level $l$ such that $r\subseteq l$ and $\ker(l)=A$; and by \cref{lem:same-kernel-levels-comparable}, there is a unique $\in$-least such level $l_r$. By \Replacement, $\bar h=\{l_r\mid r\in S\}$ is a set. Define
\begin{equation*}
h=\{l \mid \textup{Lev}(l)\land \ker(l)=A\land (l \in\bar h\lor \exists w\in\bar h(l \in w))\} \cup A.
\end{equation*}
\noindent The set $h$ exists: if $\bar h$ is empty, then $h=A$; otherwise, $h$ is a subset of a supertransitive set containing $\bar h$ and hence $A$, which exists by \StratificationMinus. The urelement members of $h$ are exactly the members of $A$, and every set in $h$ is a level with kernel $A$. If $q\in l\in h$ and $q$ is a level with kernel $A$, the transitivity of levels gives $q\in h$. Hence \cref{lem:fixed-kernel-history} shows that $h$ is a history and $\ker(h)=A$.

Let $t=\pot(h)$. Then $t$ is a level and $\ker(t)=A$ by \cref{lem:pot-kernel}. Let $x\in s$. If $x$ is a urelement, then $x\in\ker(s)\subseteq A$, so $x\in t$. If $x$ is a set, then \cref{lem:level-decomposition} gives an $r\in S$ such that $x\subseteq r$. Since $r\subseteq l_r\in h$, we have $x\in\pot(h)=t$. Therefore $s\subseteq t$, contradicting the choice of $s$. This proves \LevelExtension.
\end{proof}

\begin{lemma}[\STU]\label{lem:stu-stage-unbounded-slices-bounded}
\StageUnbounded implies that every slice is a subset of a level.
\end{lemma}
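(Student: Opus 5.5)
The plan is to argue by Potent Induction on slices. By \cref{thm:stu-proves-ltu-minus}, \STU proves \LTUMinus, and by \cref{prop:stage-axiom-consequences}(2), \StageUnbounded gives \Replacement. So \cref{lem:level-extension} yields \LevelExtension, and all the \LTUMinus facts of Section 2 are available.

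One preliminary check is needed. The proof of \cref{lem:potent-minimality} uses only \Separation and \Extensionality. \Separation for $\mathcal{L}_\textbf{stage}$-formulas follows from \Specification, exactly as in the proof of \cref{thm:stu-proves-ltu-minus}. So Potent Induction applies to the $\mathcal{L}_\textbf{stage}$-definable class of slices, and these are potent by \cref{lem:stu-slice-properties}(3).

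Suppose for reductio that some slice is contained in no level, and fix an $\in$-minimal such slice $\check{\mathbf{s}}$. Put $A=\ker(\check{\mathbf{s}})$, which exists by \cref{prop:kernel-pot-existence}. By \cref{lem:stu-slice-decomposition}, $\check{\mathbf{s}}=\pot(H_{\mathbf{s}})\cup A$. If $H_{\mathbf{s}}$ is empty, then $\check{\mathbf{s}}=A$ is a set of urelements and hence a level, a contradiction. Otherwise, every $q\in H_{\mathbf{s}}$ is a slice lying in $\check{\mathbf{s}}$, so by minimality $q$ is included in some level $l$.

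The next step is to make the choice of such a level definable and unique, so that \Replacement applies. Since $\ker(q)\subseteq\ker(l)$, the \hyperref[lem:level-shrink]{Level-Shrink Lemma} gives a level $l^{\ker(q)}$ with kernel $\ker(q)$. It still includes $q$, because every set in $q$ has kernel $\subseteq\ker(q)$ and every urelement of $q$ lies in $\ker(q)$. Since $q\in\check{\mathbf{s}}$ and $\check{\mathbf{s}}$ is transitive, \cref{prop:kernelbasics} gives $\ker(q)\subseteq A$. By \LevelExtension there is then a level with kernel exactly $A$ that includes $q$. By \cref{lem:same-kernel-levels-comparable}, there is a unique $\in$-least such level, which we call $m_q$. \Replacement then yields the set $\bar h=\{m_q\mid q\in H_{\mathbf{s}}\}$.

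Now I would copy the construction in the proof of \cref{lem:level-extension}. Let $h$ consist of $A$ together with all levels of kernel $A$ that either belong to $\bar h$ or are members of some element of $\bar h$. This $h$ exists by \Separation inside a supertransitive set containing $\bar h$, given by \StratificationMinus. By \cref{lem:fixed-kernel-history}, $h$ is a history with $\ker(h)=A$, so $t=\pot(h)$ is a level.

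Finally I check that $\check{\mathbf{s}}\subseteq t$. Every urelement of $\check{\mathbf{s}}$ lies in $A\subseteq t$. Every set $x\in\check{\mathbf{s}}$ lies in $\pot(H_{\mathbf{s}})$, so $x\subseteq q\subseteq m_q\in h$ for some $q$, and hence $x\in t$. This contradicts the choice of $\check{\mathbf{s}}$.

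The main obstacle is the uniqueness step. The minimality hypothesis only gives \emph{some} level around each $q$, whereas \StageUnbounded and \Replacement need a functional relation. The route through Level-Shrink, then \LevelExtension to the common kernel $A$, and then the $\in$-least choice among levels of one fixed kernel is what resolves this.
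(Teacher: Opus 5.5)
Your proposal is correct and follows essentially the same route as the paper's proof. Both take an $\in$-minimal slice contained in no level, use Level-Shrink and then \LevelExtension to bound each $q\in H_{\mathbf{s}}$ by a level of kernel $A$, take the $\in$-least such level so that \Replacement applies, and build the downward-closed history as in \cref{lem:level-extension}. Your separate case $H_{\mathbf{s}}=\emptyset$ and your check that Potent Induction covers the $\mathcal{L}_\textbf{stage}$-definable class of slices are harmless additions that the paper leaves implicit.
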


\begin{proof}
Work in \STU +\StageUnbounded.  Note that we can use all the consequences of \LTUMinus + \Replacement by Theorem \ref{thm:stu-proves-ltu-minus} and Proposition \ref{prop:stage-axiom-consequences}, including \LevelExtension by Lemma \ref{lem:level-extension}. Suppose some slice is not contained in any level. Every slice is potent by \cref{lem:stu-slice-properties} so by Potent Induction, fix an $\in$-minimal slice $x$ which is not contained in any level. Let $A = \ker(x)$ and $H_x=\{q\in x\mid \Slice(q)\}.$
Let $q\in H_x$. By the minimality of $x$, there is a level $m$ such that $q\subseteq m$. By the \hyperref[lem:level-shrink]{Level-Shrink Lemma}, 
$$m^{\ker(q)}=\{z\in m\mid \Set(z)\land\ker(z)\subseteq\ker(q)\}\cup\ker(q)$$
\noindent is a level with kernel $\ker(q)$, and $q\subseteq m^{\ker(q)}$. Since $\ker(q) \subseteq A$, by \LevelExtension, there is a level $l$ such that $m^{\ker(q)} \subseteq l$ and $\ker(l)=A$. Thus, for each $q\in H$, there is a level with kernel $A$ which contains $q$. For each $q\in H_x$, let $l_q$ be the unique $\in$-least level $l$ such that $q\subseteq l$ and $\ker(l)=A$, which exists by \cref{lem:same-kernel-levels-comparable}. By \Replacement, $\bar h=\{l_q\mid q\in H_x\}$ is a set.

Now extend $\bar h$ as in the previous lemma by letting
\begin{equation*}
h=\{v\mid \textup{Lev}(v)\land \ker(v)=A\land (v\in\bar h\lor \exists w\in\bar h(v\in w))\}\cup A.
\end{equation*}

\noindent The same argument shows that \cref{lem:fixed-kernel-history} applies, so $h$ is a history and $\ker(h)=A$. Let $t=\pot(h)$.  If $a$ is a urelement in $x$, then $a \in A=\ker(h)$ so $a\in t$. If $y \in x$ is a set, then by \cref{lem:stu-slice-decomposition}, there is $q\in H_x$ such that $y\subseteq q$. Since $q\subseteq l_q\in h$, we have $y\in\pot(h)=t$. Thus $x\subseteq t$, which is a level, contradicting the definition of $x$.
\end{proof}

\begin{theorem}\label{thm:stage-level-unbounded-equivalence}
The following pairs of theories are set-theoretically equivalent.
\begin{enumerate}
\item  \STU+\StageUnbounded and \LTU + \LevelUnbounded.

\item \STU+\StageUnboundedPlus and \LTU+\LevelUnboundedPlus.
\end{enumerate}
\end{theorem}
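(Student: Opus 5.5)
The plan is to prove both pairs of inclusions: the stage theory proves every axiom of the corresponding level theory, and the level theory proves the canonical translation of every axiom of the corresponding stage theory. Together these give set-theoretic equivalence. The canonical translation $(-)^*$ of \cref{def:canonical-translation} leaves $\mathcal{L}_\Ur$-formulas unchanged, so the second inclusion yields the $\mathcal{L}_\Ur$-consequences exactly as in the proof of \cref{thm:ltu-interprets-stu}.

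\emph{From levels to stages.} By \cref{thm:ltu-interprets-stu}, \LTU already proves the translations of all axioms of \STU. So it suffices to check the extra axiom. That proof also established that $(x\avail\mathbf{s})^*$ is equivalent in \LTU to $x\in\mathbf{s}$.
\begin{enumerate}
\item For (1), $\StageUnbounded^*$ says that whenever $\forall x\in w\exists! y\,\varphi(x,y)$, there is a level $s$ with every value $y$ in $s$. This is literally \LevelUnbounded.
\item For (2), $\StageUnboundedPlus^*$ says that if every $x\in w$ is $\varphi$-related to some level $r$, then there is a level $s$ such that each $x\in w$ is related to some level $r\in s$. Since $(\mathbf{r}<\mathbf{s})^*$ is $\Lev(\mathbf{r})\land\Lev(\mathbf{s})\land\mathbf{r}\in\mathbf{s}$, this is exactly \LevelUnboundedPlus.
\end{enumerate}
Hence \LTU + \LevelUnbounded (respectively \LevelUnboundedPlus) proves every $\mathcal{L}_\Ur$-theorem of \STU + \StageUnbounded (respectively \StageUnboundedPlus).

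\emph{From stages to levels.} Work in \STU + \StageUnbounded. By \cref{thm:stu-proves-ltu-minus} we have \UrDef, \Extensionality and \Separation, so the only remaining axiom of \LTU is \Stratification.
\begin{enumerate}
\item For a set $x$: \Staging gives a stage $\mathbf{s}$ with $x\form\mathbf{s}$, so $x\subseteq\check{\mathbf{s}}$ by \cref{lem:stu-slice-properties}(2). By \cref{lem:stu-stage-unbounded-slices-bounded}, $\check{\mathbf{s}}\subseteq l$ for some level $l$, hence $x\subseteq l$.
\item For a urelement $a$: \Staging gives $a\form\mathbf{s}$, so $a\avail\mathbf{s}$ by definition of availability. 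Thus $a\in\check{\mathbf{s}}\subseteq l$ for some level $l$.
\end{enumerate}
So \LTU holds. \Replacement holds by \cref{prop:stage-axiom-consequences}(2), and over \LTU it is equivalent to \LevelUnbounded by \cref{prop:LTUImplication}. This gives (1).

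For (2), \StageUnboundedPlus implies \StageUnbounded by \cref{prop:stage-axiom-relations}, so the argument just given yields \LTU. \Collection then follows from \cref{prop:stage-axiom-consequences}(3), and it is equivalent over \LTU to \LevelUnboundedPlus by \cref{prop:LTUImplication}.

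The only substantive step is \Stratification in the stage-to-level direction, and it is exactly what \cref{lem:stu-stage-unbounded-slices-bounded} provides. The rest is bookkeeping. The one point to watch is that the equivalences in \cref{prop:LTUImplication} are stated over \LTU, so they may be invoked only after \Stratification has been established.
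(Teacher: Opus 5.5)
Your proposal is correct and follows the paper's proof essentially step for step. For the stage-to-level direction you get \Stratification from \cref{lem:stu-stage-unbounded-slices-bounded} and then \LevelUnbounded (resp.\ \LevelUnboundedPlus) from \cref{prop:stage-axiom-consequences} and \cref{prop:LTUImplication}, and for the converse you use the canonical translation with $(y\avail\mathbf{s})^*\leftrightarrow y\in\mathbf{s}$, exactly as the paper does, including applying \cref{prop:LTUImplication} only after \Stratification is established.
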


\begin{proof}
We first show that \STU+\StageUnbounded proves \LTU + \LevelUnbounded. Assume the former theory. By \cref{prop:stage-axiom-consequences}(2), \Replacement holds, and \cref{thm:stu-proves-ltu-minus} gives \UrDef, \Extensionality, and \Separation. For \Stratification, fix an object $x$ and a stage $\mathbf{s}$ such that $x\form\mathbf{s}$. If $x$ is a set, \cref{lem:stu-slice-properties}(2) gives $x\subseteq\check{\mathbf{s}}$. If $x$ is a urelement, then $x\in\check{\mathbf{s}}$ by the definition of availability. By \cref{lem:stu-stage-unbounded-slices-bounded}, there is a level $l$ such that $\check{\mathbf{s}}\subseteq l$. Hence $x\subseteq l$ in the set case and $x\in l$ in the urelement case. Thus \LTU holds. By \cref{diagram:ltu-implications}, \Replacement now gives \LevelUnbounded.

If \StageUnboundedPlus is assumed instead, then \StageUnbounded follows from \cref{prop:stage-axiom-relations}, and the preceding argument gives \LTU. Moreover, \Collection follows from \cref{prop:stage-axiom-consequences}(3). Hence \LevelUnboundedPlus follows from \cref{diagram:ltu-implications}, so \STU+\StageUnboundedPlus proves \LTU+\LevelUnboundedPlus.

Conversely, work first in \LTU+\LevelUnbounded and use the canonical translation in \cref{def:canonical-translation}. By \cref{thm:ltu-interprets-stu}, every axiom of \STU holds under this translation. It remains to verify the translated instances of \StageUnbounded. Let $\varphi(x,y)$ be any $\mathcal{L}_\textbf{stage}$-formula. Since
$$\bigl(y\avail\mathbf{s}\bigr)^*\longleftrightarrow y\in s$$
by the proof of \cref{thm:ltu-interprets-stu}, the translation of the corresponding instance of \StageUnbounded is
$$\forall x\in w\exists!y\,\varphi^*(x,y)\rightarrow\exists s\bigl(\Lev(s)\land\forall x\in w\exists y\in s\,\varphi^*(x,y)\bigr).$$
This is precisely the instance of \LevelUnbounded obtained by substituting $\varphi^*$ for $\varphi$. Hence \LTU+\LevelUnbounded interprets \STU+\StageUnbounded.

Now work in \LTU+\LevelUnboundedPlus. Let $\varphi(x,\mathbf{r})$ be any $\mathcal{L}_\textbf{stage}$-formula and assume$\forall x\in w\exists r\bigl(\Lev(r)\land\varphi^*(x,r)\bigr).$ Applying \LevelUnboundedPlus to $\varphi^*$ gives
$$\exists s\bigl(\Lev(s)\land\forall x\in w\exists r\in s(\Lev(r)\land\varphi^*(x,r))\bigr),$$
which is the translation of $\exists\mathbf{s}\forall x\in w\exists\mathbf{r}\bigl(\varphi(x,\mathbf{r})\land\mathbf{r}<\mathbf{s}\bigr)$. Thus \LTU+\LevelUnboundedPlus interprets \STU+\StageUnboundedPlus. Since the interpretation leaves $\mathcal{L}_\Ur$ unchanged, the two pairs of theories are set-theoretically equivalent.
\end{proof}

\noindent \STU proves that every level is contained in some slice. Together with the preceding lemma, this shows that, in \STU + \StageUnbounded, levels and slices are \textit{mutually cofinal}: every level is contained in a slice, and every slice is contained in a level.\footnote{However, \STU + \StageUnbounded cannot show that slices \textit{are exactly} the levels as in Button \cite[Lemma 4.7]{Button2021LevelTheory1}.  Intuitively, this is because we can interpret stages freely inside a model of urelement set theory. To illustrate, consider a model of \ZFU with the urelements forming a set $A=\{a_n\mid n<\omega\}$. If we interpret stages as all and only the $V_\alpha(A)$ hierarchies and interpret availability as membership, then $V_\alpha(B)$ with $B \subsetneq A$ is a level but not a slice since every slice has kernel $A$. Conversely, let $A_n=\{a_i\mid i<n\}$, and we can treat $\bigcup_{n<\omega}V_\omega(A_n)$ as an additional stage. $\bigcup_{n<\omega}V_\omega(A_n)$, as a slice, is not itself a level by \cref{lem:history-kernel}  because it has kernel $A$ but does not have $A$ as a member. Both stage interpretations satisfy \StageUnbounded since the model has \Replacement.}

\subsection{Equivalence via Reflection}
Partial reflection can also achieve set-theoretic equivalence. We first record the urelemental analogue of an absoluteness result proved by Button and Walsh \cite[Proposition 8.29]{ButtonWalsh2018PhilosophyModelTheory}.

\begin{lemma}[\LTUMinus]\label{lem:level-absoluteness-supertransitive}
Let $t$ be a supertransitive set such that (\StratificationMinus)$^t$. Then for any $r\in t$, $$\Lev(r)^t \leftrightarrow \Lev(r).$$
\end{lemma}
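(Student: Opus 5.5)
The plan is to show that each ingredient in the definition of $\Lev$ (transitivity, $\ker$, $\pot$, $\textup{Hist}$) is absolute between $t$ and the universe for arguments in $t$. Then we check that the witnessing history of a level in $t$ itself lies in $t$.

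First, since $t$ is transitive, the formulas ``$x$ is transitive'', ``$x$ is potent'', ``$x$ is supertransitive'' and $x\subseteq y$ are absolute for $x,y\in t$, because all their quantifiers are bounded by $x$ or $y$. Potency of $t$ gives a useful closure fact: any subset of a set in $t$ is again in $t$.

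The main step, and the one I expect to be the real obstacle, is absoluteness of kernels: for every set $x\in t$, $\ker(x)^t=\ker(x)$, and $\ker(x)\in t$.

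\begin{itemize}
\item \emph{The inclusion $\ker(x)\subseteq\ker(x)^t$.} This is immediate, since $\ker^t$ quantifies over fewer transitive supersets of $x$ (only those in $t$).
\item \emph{The reverse inclusion.} Suppose $a$ is a urelement with $a\notin\ker(x)$.
  \begin{itemize}
  \item By $(\StratificationMinus)^t$ and absoluteness of supertransitivity, fix a supertransitive $z\in t$ with $x\subseteq z$.
  \item Form $z^*=\{w\in z\mid w\neq a\land(\Set(w)\rightarrow a\notin\ker(w))\}$ as in the proof of \cref{prop:kernelbasics}(4), using the real $\ker$.
  \item Since $z^*\subseteq z\in t$, potency of $t$ gives $z^*\in t$.
  \item The argument there shows $z^*$ is transitive.
  \item By \cref{prop:kernelbasics}(4), $a\notin\ker(x)$ yields $a\notin x$ and $a\notin\ker(w)$ for every set $w\in x$. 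Hence $x\subseteq z^*$.
  \item Since $a\notin z^*$, $z^*$ is a transitive superset of $x$ in $t$ omitting $a$, so $a\notin\ker(x)^t$.
  \end{itemize}
\item \emph{$\ker(x)\in t$.} We have $\ker(x)\subseteq z\in t$, so $\ker(x)\in t$ by potency of $t$.
\end{itemize}

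Next comes absoluteness of $\pot$. For a set $x\in t$, every $y$ with $y\subseteq w\in x$ lies in $t$, since $w\in t$ by transitivity and then $y\in t$ by potency. Together with kernel absoluteness this gives $\pot(x)^t=\pot(x)$. Moreover $\pot(x)\subseteq z$ for the supertransitive $z\in t$ above, so $\pot(x)\in t$.

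It follows that for $h\in t$ the three clauses of $\textup{Hist}(h)$ are absolute. They only involve bounded quantifiers over $h$, the sets $x\cap h$ (which lie in $t$ by potency), and the absolute operations $\ker$ and $\pot$. So $\textup{Hist}(h)^t\leftrightarrow\textup{Hist}(h)$.

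Finally we treat $\Lev(r)$ for $r\in t$.
\begin{itemize}
\item \emph{Forward direction.} If $\Lev(r)^t$, there is $h\in t$ with $\textup{Hist}(h)^t$ and $r=\pot(h)^t$. By absoluteness, $h$ is a history and $r=\pot(h)$.
\item \emph{Backward direction.} Suppose $r=\pot(h)$ with $h$ a history. Then $h\subseteq r$: each set $w\in h$ satisfies $w\subseteq w\in h$, and each urelement of $h$ is in $\ker(h)\subseteq\pot(h)$, as in the proof of \cref{lem:pot-kernel}. Hence $h\in t$ by potency of $t$, and absoluteness gives $\Lev(r)^t$.
\end{itemize}
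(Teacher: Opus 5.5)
Your proof is correct and follows essentially the same route as the paper: you show absoluteness of transitivity and potency, then absoluteness of $\ker$ and $\pot$ (with both landing in $t$) via a supertransitive $z\in t$ supplied by (\StratificationMinus)$^t$, then absoluteness of historyhood, and finally use $h\subseteq r\in t$ to bring the witnessing history into $t$. Two small remarks: first, for the reverse kernel inclusion the paper takes an external transitive $y\supseteq x$ omitting $a$ and uses $z\cap y\in t$, which is shorter than rebuilding $z^*$ (though your version also works); second, potency is not absolute because its quantifiers are bounded (the quantifier over subsets is unbounded), but because $t$ itself is potent, which is exactly the closure fact you state next.
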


\begin{proof}
First, transitivity, potency, and supertransitivity are absolute for $t$. In particular, if $t$ regards $p\in t$ as potent and $x\subseteq y\in p$, then $x\in t$ by the potency of $t$, and hence $x\in p$ by the potency of $p$ in $t$.

We next show that $t$ correctly computes kernels and potentiations. Let $q\in t$ be a set and $A = \ker(q)$. Since $(\StratificationMinus)^t$, we fix some supertransitive $p\in t$ such that $q\subseteq p$. So $A\subseteq p$, and hence $A\in t$ by the potency of $t$. Moreover, $t$ also regards $A$ as the kernel of $q$. Otherwise, there would be some $a$ such that $(a \in \ker(q))^t$ but $a\notin A$. Then there is an external transitive set $y$ such that $q\subseteq y$ and $a\notin y$. Since $p\cap y\subseteq p\in t$, $p\cap y\in t$. But $p\cap y$ is transitive, contains $q$, and omits $a$, contradicting the judgment of $t$. Hence $t$ correctly computes $\ker(q)$.

Let $P=\pot(q)$. If $x\in P$ is a set, then $x\subseteq y\in q$ for some $y$, so $x\in p$ by the potency of $p$; and if $x\in P$ is a urelement, then $x\in A\subseteq p$. Thus $P\subseteq p$, and so $P\in t$ by the potency of $t$. It then follows that $t$ regards $P$ as the potentiation of $q$ and thus correctly computes $\pot(q)$. It follows that historyhood is absolute for every $h\in t$ since the terms occurring in the definition of a history are absolute.

Now suppose that $r\in t$ is an actual level, witnessed by a history $h$ such that $r=\pot(h)$. Since $h\subseteq r\in t$, $h\in t$. By the absoluteness of historyhood and potentiation, $t$ thinks that $h$ is a history with $r=\pot(h)$, so $\Lev(r)^t$. Conversely, if $t$ regards $r$ as a level, let $h\in t$ witness this. The same absoluteness argument shows that $h$ is an actual history and $r=\pot(h)$.\end{proof}

\begin{prop}[\STU]\label{prop:stage-rp-minus-rpsim}
\StageRPMinus implies \RPSim.
\end{prop}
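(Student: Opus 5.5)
Assume $\varphi(\vec{x})$ holds, where $\varphi$ is an $\mathcal{L}_\Ur$-formula. The natural candidate for the reflecting set is a slice. Apply \StageRPMinus to the $\mathcal{L}_\textbf{stage}$-formula $\varphi(\vec{x})$; in fact we apply it to a slightly strengthened formula, explained below. This gives a stage $\mathbf{s}$ with $\vec{x}\avail\mathbf{s}$ and $\varphi^{\mathbf{s}}(\vec{x})$. Take $t=\check{\mathbf{s}}$. By \cref{lem:stu-slice-properties}, $t$ is a set and is supertransitive, and $\vec{x}\in t$ by the definition of the slice.

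The key observation is that for a formula of $\mathcal{L}_\Ur$ with no stage quantifiers, the stage relativization $\varphi^{\mathbf{s}}$ coincides with the $\in$-relativization $\varphi^{t}$. Object quantifiers $\exists x(x\avail\mathbf{s}\land\ldots)$ range exactly over members of $\check{\mathbf{s}}$, and the atomic clauses are unchanged. A routine induction on $\varphi$ then gives $\varphi^{\mathbf{s}}(\vec{x})\leftrightarrow\varphi^{t}(\vec{x})$ for $\vec{x}\avail\mathbf{s}$, so $\varphi^t(\vec{x})$ holds.

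The remaining care concerns what counts as ``$\vec{x}\in t$''. \StageRPMinus gives $\vec{x}\avail\mathbf{s}$, which means precisely $\vec{x}\in\check{\mathbf{s}}$. There is therefore no real obstacle here; we do not even need the strengthened formula or \StageDirected.

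The only point I expect to need checking is that the convention for $\varphi$ in \RPSim allows arbitrary $\mathcal{L}_\Ur$ formulas, which is what the induction handles. If one also wanted $t$ to satisfy something like $(\StratificationMinus)^t$, for later use with \cref{lem:level-absoluteness-supertransitive}, I would reflect the conjunction of $\varphi$ with $\forall y(\Set(y)\rightarrow\exists z\,(z \text{ supertransitive}\land y\subseteq z))$, which holds in \STU by \cref{thm:stu-proves-ltu-minus}. Supertransitivity is absolute for supertransitive $t$, so this conjunct transfers to $t$. For the proposition as stated, however, the direct argument suffices.
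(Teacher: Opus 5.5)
Your proposal is correct and is essentially the paper's proof: apply \StageRPMinus directly to $\varphi(\vec{x})$, take $t=\check{\mathbf{s}}$, which is supertransitive by \cref{lem:stu-slice-properties}, and note that for $\mathcal{L}_\Ur$-formulas the stage relativization $\varphi^{\mathbf{s}}$ is just $\in$-relativization to $t$. As you conclude yourself, the detour through a strengthened formula and $(\StratificationMinus)^t$ is unnecessary for this proposition.
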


\begin{proof}
Suppose that $\varphi(\vec{x})$ holds, where $\varphi$ is an $\mathcal{L}_\Ur$-formula. \StageRPMinus gives a stage $\mathbf{s}$ such that $\vec{x}\avail\mathbf{s}$ and $\varphi^{\mathbf{s}}(\vec{x})$. Let $t=\check{\mathbf{s}}$. By \cref{lem:stu-slice-properties}, $t$ is supertransitive and $\vec{x}\in t$. Moreover, object quantification in $\varphi^{\mathbf{s}}$ is exactly quantification over $t$. Hence, $\varphi^t(\vec{x})$. \end{proof}

\begin{lemma}[\LTUMinus]\label{lem:rpsim-level-extension}
\RPSim implies \LevelExtension.
\end{lemma}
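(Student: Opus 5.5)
The plan is to argue by minimal counterexample, as in \cref{lem:level-extension}, but to bound the needed extensions inside a single reflecting supertransitive set instead of using \Replacement. Work in \LTUMinus + \RPSim. Fix a set $A$ of urelements. Suppose some level $s$ with $\ker(s)\subseteq A$ has no level $t\supseteq s$ with $\ker(t)=A$. By Potent Induction, take such an $s$ that is $\in$-minimal. As in \cref{lem:level-extension}, put
$$S=\{r\in s\mid \Lev(r)\land\ker(r)=\ker(s)\}.$$
Each $r\in S$ has $\ker(r)\subseteq A$. By minimality of $s$, every $r\in S$ has an extending level with kernel $A$.

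Consider the formula $\varphi(s,A)$ given by the conjunction of two parts:
\begin{itemize}
\item[] $\StratificationMinus$ (a sentence);
\item[] $\forall r\in s\,\bigl(\Lev(r)\land\ker(r)=\ker(s)\rightarrow\exists l(\Lev(l)\land r\subseteq l\land\ker(l)=A)\bigr)$.
\end{itemize}
This formula is true. By \RPSim, there is a supertransitive $t$ with $s,A\in t$ and $\varphi^t(s,A)$. In particular $(\StratificationMinus)^t$ holds, so \cref{lem:level-absoluteness-supertransitive} applies: levelhood is absolute for members of $t$. The proof of that lemma also shows that $t$ computes kernels correctly. Subsethood and being a urelement are trivially absolute. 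Hence, for each $r\in S$, there is an actual level $l\in t$ with $r\subseteq l$ and $\ker(l)=A$.

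Now define, by \Separation on $t$,
$$h=\{l\in t\mid\Lev(l)\land\ker(l)=A\}\cup A.$$
Here $A\subseteq t$ because $A\in t$ and $t$ is transitive. I then check the three conditions of \cref{lem:fixed-kernel-history}.
\begin{enumerate}
\item The urelements of $h$ are exactly those of $A$.
\item Every set in $h$ is a level with kernel $A$.
\item If $q\in l\in h$ and $q$ is a level with kernel $A$, then $q\in t$ by transitivity of $t$, so $q\in h$.
\end{enumerate}
So $h$ is a history with $\ker(h)=A$. Hence $u=\pot(h)$ is a level, and $\ker(u)=A$ by \cref{lem:pot-kernel}.

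It remains to show $s\subseteq u$. If $x\in s$ is a urelement, then $x\in\ker(s)\subseteq A\subseteq u$. If $x\in s$ is a set, \cref{lem:level-decomposition} gives some $r\in S$ with $x\subseteq r$. The reflected witness $l\in h$ satisfies $r\subseteq l$, so $x\subseteq l\in h$ and therefore $x\in\pot(h)=u$. This contradicts the choice of $s$, which proves \LevelExtension.

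The main obstacle is the absoluteness step: ensuring that the witnesses $l$ which $t$ believes to be level extensions with kernel $A$ really are such. This is why \StratificationMinus is packed into the reflected formula, so that \cref{lem:level-absoluteness-supertransitive} and its kernel-correctness argument apply. An edge case is $S=\emptyset$, where $s=\ker(s)$ is initial; then $A$ itself, which lies in $h$, already works.
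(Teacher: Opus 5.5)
Your proposal is correct and follows essentially the paper's proof: a minimal counterexample $s$, then \RPSim applied to the covering assertion conjoined with \StratificationMinus, then \cref{lem:level-absoluteness-supertransitive} (with its kernel-correctness argument) to get actual kernel-$A$ levels in $t$, and finally \cref{lem:fixed-kernel-history} to build the extending level. The only differences are minor. You reflect with parameter $s$ rather than $S$, and you take $h$ to be all kernel-$A$ levels in $t$ plus $A$; transitivity of $t$ makes this set downward closed automatically, which is slightly simpler than the paper's $\bar h$-then-downward-closure construction.
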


\begin{proof}
The argument is the same as in \cref{lem:level-extension}, and the point is that \RPSim can play the role of \Replacement. Suppose \LevelExtension fails. Then for some set $A$ of urelements, by Potent Induction there is an $\in$-minimal level $s$ with $\ker(s) \subseteq A$ that has no extension with kernel $A$. Let
$$S=\{r\in s\mid\Lev(r)\land\ker(r)=\ker(s)\}.$$
\noindent For every $r\in S$, the minimality of $s$ gives a level $l$ such that $r\subseteq l$ and $\ker(l)=A$. Apply \RPSim to this assertion together with \StratificationMinus, with $S$ and $A$ as parameters. There is a supertransitive set $t$ satisfying \StratificationMinus and containing $S$ and $A$ such that
$$t\models\forall r\in S\exists l\bigl(\Lev(l)\land r\subseteq l\land\ker(l)=A\bigr).$$
By \cref{lem:level-absoluteness-supertransitive}, every $r\in S$ is contained in an actual level $l\in t$ with kernel $A$. Let
$$\bar h=\{l\in t\mid\Lev(l)\land\ker(l)=A\land\exists r\in S(r\subseteq l)\},$$
and extend $\bar h$ downwards by setting
$$h=\{l\mid\Lev(l)\land\ker(l)=A\land(l\in\bar h\lor\exists w\in\bar h(l\in w))\}\cup A.$$
The same argument as in \cref{lem:level-extension} shows that $h$ is a history with kernel $A$ and that $s\subseteq\pot(h)$, which contradicts the assumption of $s$.\end{proof}

\begin{lemma}[\STU]\label{lem:stage-rp-minus-slices-bounded}
\StageRPMinus implies that every slice is a subset of a level.
\end{lemma}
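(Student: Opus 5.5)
We mirror the proof of \cref{lem:stu-stage-unbounded-slices-bounded}, using \StageRPMinus (via \RPSim, \cref{prop:stage-rp-minus-rpsim}) in place of \Replacement. Work in \STU + \StageRPMinus. By \cref{thm:stu-proves-ltu-minus} and \cref{prop:stage-rp-minus-rpsim}, we have \LTUMinus + \RPSim. Hence \cref{lem:rpsim-level-extension} gives \LevelExtension, and the \hyperref[lem:level-shrink]{Level-Shrink Lemma} is available. Suppose some slice is not contained in a level. Slices are potent by \cref{lem:stu-slice-properties}, so Potent Induction yields an $\in$-minimal such slice $x$. Put $A=\ker(x)$ and $H_x=\{q\in x\mid\Slice(q)\}$.

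Exactly as before, minimality gives for each $q\in H_x$ a level $m\supseteq q$. Shrinking gives $m^{\ker(q)}$, which still contains $q$ because every set in $q$ has kernel inside $\ker(q)$. Extending via \LevelExtension then gives a level $l\supseteq q$ with $\ker(l)=A$. So
$$\forall q\in H_x\,\exists l\bigl(\Lev(l)\land q\subseteq l\land \ker(l)=A\bigr).$$
At this point the earlier proof used \Replacement to collect the $\in$-least such $l_q$. Instead, we apply \RPSim to the conjunction of this sentence with \StratificationMinus, taking $H_x$ and $A$ as parameters. These are sets: $H_x$ by \Separation on $x$, and $A$ by \cref{prop:kernel-pot-existence}. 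This yields a supertransitive $t$ containing $H_x,A$ that satisfies $(\StratificationMinus)^t$ and the relativized statement.

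\Cref{lem:level-absoluteness-supertransitive} then tells us that the witnesses $l\in t$ are real levels. The proof of that lemma also shows $t$ computes kernels correctly, so $\ker(l)=A$ really holds. Containment $q\subseteq l$ is absolute for transitive $t$. Now set
$$\bar h=\{l\in t\mid \Lev(l)\land\ker(l)=A\land\exists q\in H_x(q\subseteq l)\},$$
and close it downward:
$$h=\{v\mid\Lev(v)\land\ker(v)=A\land(v\in\bar h\lor\exists w\in\bar h(v\in w))\}\cup A.$$
This $h$ is a set, being a subset of $t$ (by transitivity of $t$) together with $A\in t$. \Cref{lem:fixed-kernel-history} makes $h$ a history with kernel $A$; let $s=\pot(h)$.

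Finally we check $x\subseteq s$. Urelements of $x$ lie in $A=\ker(h)\subseteq s$. For a set $y\in x$, \cref{lem:stu-slice-decomposition} gives some $q\in H_x$ with $y\subseteq q$, and then $q\subseteq l\in\bar h\subseteq h$ for some $l$, so $y\in\pot(h)$. This contradicts the choice of $x$.

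The main obstacle is the reflection step. We must make sure \StageRPMinus is applied to an $\mathcal{L}_\Ur$-formula so that \cref{prop:stage-rp-minus-rpsim} applies; the formula involves $\Lev$ and $\ker$, which are $\mathcal{L}_\Ur$-definable, so it is. We must also ensure $t$ sees \StratificationMinus, so that the absoluteness lemma for levels and kernels applies. Both are handled by reflecting the conjunction.
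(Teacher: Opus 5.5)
Your proposal is correct and follows essentially the same route as the paper. The steps match: an $\in$-minimal counterexample slice $x$; the Level-Shrink Lemma plus \LevelExtension to obtain kernel-$A$ levels covering each $q\in H_x$; \RPSim applied together with \StratificationMinus (parameters $H_x$, $A$), with level absoluteness, in place of \Replacement to gather $\bar h$; downward closure to a history $h$; and \cref{lem:stu-slice-decomposition} to get $x\subseteq\pot(h)$. You also make explicit details the paper leaves implicit, such as why $t$ computes $\ker(l)$ correctly and why $h\subseteq t$ is a set.
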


\begin{proof}
Work in \STU + \StageRPMinus. By Theorem \ref{thm:stu-proves-ltu-minus} and Proposition \ref{prop:stage-rp-minus-rpsim}, we can use all consequences of \LTUMinus + \RPSim including \LevelExtension by Lemma \ref{lem:rpsim-level-extension}. Suppose some slice is not a subset of any level, and by Potent Induction, fix an $\in$-minimal slice $x$ as such. Let $A=\ker(x)$ and $H_x=\{q\in x\mid\Slice(q)\}$.

For each $q\in H_x$, minimality gives a level $m$ containing $q$. By the \hyperref[lem:level-shrink]{Level-Shrink Lemma}, $m^{\ker(q)}$ is a level with kernel $\ker(q)$ containing $q$. Since $\ker(q)\subseteq A$, we can apply \LevelExtension given by \cref{lem:rpsim-level-extension} to get a level $l$ with kernel $A$ containing $q$.

Thus $\forall q\in H_x\exists l(\Lev(l)\land q\subseteq l\land\ker(l)=A)$. Apply \RPSim to this assertion and \StratificationMinus. It follows from \cref{lem:level-absoluteness-supertransitive} that there is a set $\bar h$ of such levels covering every member of $H_x$. Extending $\bar h$ downwards through the levels with kernel $A$ and adding $A$ as in \cref{lem:rpsim-level-extension} gives a history $h$ with kernel $A$. Let $t=\pot(h)$. Consequently $x\subseteq t$ by \cref{lem:stu-slice-decomposition}---contradiction.\end{proof}

\begin{lemma}\label{lem:stage-rp-minus-proves-ltu-rp}
\STU+\StageRPMinus proves \LTU+\RPSim.
\end{lemma}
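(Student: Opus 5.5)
The plan mirrors the first half of the proof of \cref{thm:stage-level-unbounded-equivalence}, with \cref{lem:stage-rp-minus-slices-bounded} in place of \cref{lem:stu-stage-unbounded-slices-bounded}. We work in \STU+\StageRPMinus.

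\emph{The axioms of \LTU.} By \cref{thm:stu-proves-ltu-minus}, \UrDef, \Extensionality and \Separation hold. It remains to establish \Stratification. Fix an object $x$ and, by \Staging, a stage $\mathbf{s}$ with $x\form\mathbf{s}$.
\begin{itemize}
\item[] If $x$ is a set, then \cref{lem:stu-slice-properties}(2) gives $x\subseteq\check{\mathbf{s}}$.
\item[] If $x$ is a urelement, then $x\avail\mathbf{s}$ by the definition of availability, so $x\in\check{\mathbf{s}}$.
\end{itemize}
By \cref{lem:stage-rp-minus-slices-bounded} there is a level $l$ with $\check{\mathbf{s}}\subseteq l$. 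Hence $x\subseteq l$ in the set case and $x\in l$ in the urelement case. So \Stratification holds, and with it all of \LTU.

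\emph{The principle \RPSim.} This is exactly \cref{prop:stage-rp-minus-rpsim}. We should check that it delivers the full scheme in $\mathcal{L}_\Ur$, including formulas with object parameters. This holds because \StageRPMinus allows object parameters $\vec{x}$ and returns a stage $\mathbf{s}$ with $\vec{x}\avail\mathbf{s}$. The slice $t=\check{\mathbf{s}}$ is then supertransitive by \cref{lem:stu-slice-properties}(3) and contains $\vec{x}$. For an $\mathcal{L}_\Ur$-formula there are no stage quantifiers, so $\varphi^{\mathbf{s}}$ is literally $\varphi^t$.

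\emph{Where the work lies.} No step here is a real obstacle; the substantive work was done in \cref{lem:stage-rp-minus-slices-bounded}. The only point needing care is that \cref{lem:stage-rp-minus-slices-bounded} is applied inside \STU+\StageRPMinus itself, and it already uses \LevelExtension, obtained from \RPSim via \cref{lem:rpsim-level-extension}. So we must make sure \RPSim is derived before \Stratification is invoked anywhere. This is the case: \cref{prop:stage-rp-minus-rpsim} and \cref{lem:rpsim-level-extension} rely only on \STU, \LTUMinus and \StageRPMinus, so no circularity arises.
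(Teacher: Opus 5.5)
Your proof is correct and follows the paper's argument. It gets \RPSim from \cref{prop:stage-rp-minus-rpsim}, takes the remaining \LTUMinus axioms from \cref{thm:stu-proves-ltu-minus}, and proves \Stratification by bounding the slice $\check{\mathbf{s}}$ of a stage at which $x$ occurs by a level via \cref{lem:stage-rp-minus-slices-bounded}. Your added check that no circularity arises, because \LevelExtension is obtained from \RPSim over \LTUMinus before \Stratification is used, is accurate.
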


\begin{proof}
By \cref{prop:stage-rp-minus-rpsim}, \RPSim holds. It remains to prove \Stratification. Fix an object $x$ and a stage $\mathbf{s}$ such that $x\form\mathbf{s}$. By \cref{lem:stage-rp-minus-slices-bounded}, there is a level $t$ such that $\check{\mathbf{s}}\subseteq t$. If $x$ is a set, then $x\subseteq\check{\mathbf{s}}\subseteq t$ by \cref{lem:stu-slice-properties}(2). If $x$ is a urelement, then $x\in\check{\mathbf{s}}\subseteq t$. Thus \Stratification holds.
\end{proof}

We now prove the other direction of the equivalence. Note that the canonical interpretation which treats stages as levels would require \LTU+\RPSim to prove \LevelRP, which is impossible as \LevelRP implies \UrSet while \ZFU + \RPSim does not. So we devise a different interpretation to cope with this issue.

\begin{definition}\label{def:semi-level}
A set $s$ is a \textit{semi-level}, written $\SemiLev(s)$, iff
\begin{enumerate}
\item $s\text{ is supertransitive}$;
\item $\forall x\in s(\Set(x)\rightarrow\exists r\in s(\Lev(r)\land x\subseteq r)).$
\end{enumerate}
\end{definition}

\noindent Every level is a semi-level by \cref{lem:levels-potent-transitive,lem:level-decomposition}. A semi-level need not arise from a single history or have a single kernel. Its set-members are instead locally bounded by levels belonging to it.

\begin{lemma}\label{thm:ltu-rpsim-interprets-stu-stage-rpminus}\
\begin{enumerate}
\item \LTU+\RPSim interprets \STU+\StageRPMinus.

\item \LTU+\RP interprets \STU+\StageRP.
\end{enumerate}
\end{lemma}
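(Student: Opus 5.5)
I will interpret stages as semi-levels rather than levels. The translation $(-)^\dagger$ is like the canonical one of \cref{def:canonical-translation}, with $\Lev$ replaced by $\SemiLev$: $\Stage(\mathbf{s})^\dagger\equiv\SemiLev(\mathbf{s})$, $(\mathbf{r}<\mathbf{s})^\dagger\equiv\mathbf{r}\in\mathbf{s}$ for semi-levels, and $x\form\mathbf{s}$ translates as $x\subseteq\mathbf{s}$ for sets and $x\in\mathbf{s}$ for urelements.

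The first step is to check that \LTU proves the translations of the \STU axioms. \OrderAxiom holds because semi-levels are transitive. \Staging follows from \Stratification together with the fact that levels are semi-levels. The key fact is
\[
(x\avail\mathbf{s})^\dagger\leftrightarrow x\in\mathbf{s}.
\]
For the forward direction, use the potency and transitivity of $\mathbf{s}$ as in \cref{thm:ltu-interprets-stu}. For the converse, a set $x\in\mathbf{s}$ lies in some level $r\in\mathbf{s}$ by clause (2) of \cref{def:semi-level}, and levels are semi-levels, so $x\form r<\mathbf{s}$. \Priority and \Specification then go through exactly as before, with \Separation on $\mathbf{s}$.

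The second step is the reflection axioms. The stage relativization $\varphi^{\mathbf{s}}$ translates to relativizing object quantifiers to $\mathbf{s}$ and stage quantifiers to semi-levels in $\mathbf{s}$. So I need the relativized formula $(\SemiLev(r))^{\mathbf{s}}$ to agree with $\SemiLev(r)$ for $r\in\mathbf{s}$. This is where \cref{lem:level-absoluteness-supertransitive} enters. If $\mathbf{s}$ is supertransitive and satisfies $(\StratificationMinus)^{\mathbf{s}}$, then levels are absolute. Supertransitivity is absolute, and clause (2) only quantifies over members of $r\subseteq\mathbf{s}$, so semi-levelhood is absolute as well.

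Every semi-level satisfies $\StratificationMinus$ internally, since each set member lies in a level in it, which is supertransitive. Hence the translated stage relativization is equivalent to ordinary $\in$-relativization to $\mathbf{s}$ of $\varphi^\dagger$, where $\varphi^\dagger$ itself contains $\SemiLev$, which we handle by reflecting it simultaneously.

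For (1), given $\varphi^\dagger(\vec x,\vec{\mathbf r})$, apply \RPSim to the conjunction of $\varphi^\dagger$, \Stratification (needed so that clause (2) holds in the reflecting set), and the statement that each $\mathbf r_i$ exists, with parameters $\vec x,\vec{\mathbf r}$. This yields a supertransitive $t$ containing them with $(\varphi^\dagger)^t$ and $\Stratification^t$. Then $t$ is a semi-level: every set in $t$ is, by $\Stratification^t$ and \cref{lem:level-absoluteness-supertransitive}, contained in a genuine level in $t$. Also $(\StratificationMinus)^t$ holds, so the absoluteness of $\SemiLev$ applies to the $\mathbf r_i\in t$. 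Thus $t$ witnesses the translated \StageRPMinus.

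For (2), given $\vec{\mathbf t}$, use \RP to reflect $\varphi^\dagger$, \Stratification, and $\SemiLev$ simultaneously (the finite simultaneous-reflection trick) with $\vec{\mathbf t}\in t$. As before, $t$ is a semi-level above $\vec{\mathbf t}$. Absoluteness of $\varphi^\dagger$ for all parameters in $t$ gives the biconditional, since the translated quantifiers over stages below $t$ range over semi-levels in $t$, which agree with $t$'s internal semi-levels.

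I expect the main obstacle to be this absoluteness bookkeeping: making sure that $(\varphi^{\mathbf s})^\dagger$ is literally $(\varphi^\dagger)^t$ up to replacing $\SemiLev$ by its relativization. The proof relies on \cref{lem:level-absoluteness-supertransitive} to handle the unrelativized level quantifier inside the definition of semi-level.
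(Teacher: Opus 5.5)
Your proposal is correct and follows essentially the paper's proof. Like the paper, you interpret stages as semi-levels, use \cref{lem:level-absoluteness-supertransitive} to make semi-levelhood absolute for semi-levels so that $\dagger$ commutes with stage relativization, and reflect $\varphi^\dagger$ together with \Stratification to obtain a reflecting semi-level. The only differences are two legitimate simplifications: you get $\mathsf{Staging}^\dagger$ directly from \Stratification (levels are semi-levels) where the paper uses \RPSim, and in (2) you use the supertransitivity built into \RP where the paper additionally reflects \Powerset.
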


\begin{proof}
We define the following translation $\dagger$.
\begin{align*}
\Stage(\mathbf{s})^\dagger&\equiv\SemiLev(\mathbf{s}),\\
(\mathbf{r}<\mathbf{s})^\dagger&\equiv\SemiLev(\mathbf{r})\land\SemiLev(\mathbf{s})\land\mathbf{r}\in\mathbf{s},\\
(x\form\mathbf{s})^\dagger&\equiv\bigl(\Set(x)\land x\subseteq\mathbf{s}\bigr)\lor\bigl(\Ur(x)\land x\in\mathbf{s}\bigr),\\
(\forall\mathbf{s}\,\varphi)^\dagger&\equiv\forall\mathbf{s}(\SemiLev(\mathbf{s})\rightarrow\varphi^\dagger).
\end{align*}
\noindent It is routine to verify that under this translation availability is exactly membership:
$$\bigl(x\avail\mathbf{s}\bigr)^\dagger\quad\longleftrightarrow\quad x\in\mathbf{s}.$$
Moreover, by Level Absoluteness and the supertransitivity of $t$, semi-levelhood is absolute for every semi-level $t$. As a result, relativization and translation commute: for all objects $\vec x$ and semi-levels $\vec r$ in a semi-level $t$, we have
$$\bigl(\varphi^\textbf{t}(\vec{x},\vec{\mathbf r})\bigr)^\dagger\quad\longleftrightarrow\quad\bigl(\varphi^\dagger(\vec{x},\vec{\mathbf r})\bigr)^t$$
by induction on stage formulas.

(1) We verify that \LTU+\RPSim proves all the translated axioms of \STU+\StageRPMinus. $\mathsf{Order}^\dagger$ holds by transitivity of semi-levels. For $\mathsf{Priority}^\dagger$, suppose that $x$ occurs at a semi-level $\mathbf{s}$ and $y\in x$. Then $x$ is a set and $x\subseteq\mathbf{s}$, so $y\in\mathbf{s}$ and hence, $y$ is available at $\mathbf{s}$. For $\mathsf{Specification}^\dagger$, suppose that every object satisfying $\theta$ is available at $\mathbf{s}$. So every such object belongs to $\mathbf{s}$. \Separation then gives the set $y=\{x\in\mathbf{s}\mid\theta^\dagger(x)\}$, which is formed on $\mathbf{s}$.

For $\mathsf{Staging}^\dagger$, fix an object $x$ and apply \RPSim to get a supertransitive set $s$ such that $x\in s$ and (\Stratification)$^s$. By \cref{lem:level-absoluteness-supertransitive}, every level recognized by $s$ is an actual level. Hence every set in $s$ is bounded by a level in $s$, so $s$ is a semi-level. Hence $\Stage(\mathbf{s})^\dagger$ and $(x\form\mathbf{s})^\dagger$.

Finally, consider \StageRPMinus. Fix semi-levels $\vec r$ and suppose that $\varphi^\dagger(\vec{x},\vec{\mathbf r})$ holds. Apply \RPSim to $\varphi^\dagger(\vec{x},\vec{\mathbf r})\land\Stratification.$
There is a supertransitive set $t$ containing $\vec{x},\vec r$ such that 
$$\bigl(\varphi^\dagger(\vec{x},\vec{\mathbf r})\bigr)^t \land \mathsf{Stratification}^t.$$
It follows that $t$ is a semi-level. Consequently, $(\exists \textbf{t}(\varphi^\textbf{t}(\vec{x},\vec{\mathbf r}) \land \vec{x}\avail \textbf{t}\land \vec {\textbf{r}} < \textbf{t}))^\dagger$, which is the translated conclusion of \StageRPMinus. 

(2) By \cref{diagram:ltu-implications}, \LTU+\RP proves \RPSim, so by (1) it remains only to verify the translated instances of \StageRP. Fix an $\mathcal{L}_\textbf{stage}$-formula $\varphi(\vec{x},\vec{\mathbf r})$ and semi-levels $\vec u$. By finite simultaneous reflection, there is a transitive set $t$ containing $\vec u$ which reflects $\varphi^\dagger$, \Stratification, \Powerset, and the formula defining the powerset of a set. The same argument for \RP implies \RPSim as in \cref{prop:LTUImplication} shows that $t$ is supertransitive, and the argument in (1) shows that $t$ is a semi-level.

Now let $\vec x\in t$ be objects and let $\vec r\in t$ be semi-levels. Since $t$ reflects $\varphi^\dagger$,
$$\varphi^\dagger(\vec{x},\vec{\mathbf r})\quad\longleftrightarrow\quad\bigl(\varphi^\dagger(\vec{x},\vec{\mathbf r})\bigr)^t.$$
Since $\dagger$ and relativization commute, the right-hand side is equivalent to $\bigl(\varphi^\textbf{t}(\vec{x},\vec{\mathbf r})\bigr)^\dagger$. Hence $\varphi^\dagger(\vec{x},\vec{\mathbf r})\leftrightarrow\bigl(\varphi^\textbf{t}(\vec{x},\vec{\mathbf r})\bigr)^\dagger.$ It follows that the $\dagger$ translation of the following holds
$$\vec{\textbf{u}} < \textbf{t} \land \forall \vec x \avail \textbf{t} \forall \vec{\textbf{r}}< \textbf{t}(\varphi(\vec{x},\vec{\mathbf r})\leftrightarrow \varphi^\textbf{t}(\vec{x},\vec{\mathbf r}))$$
which yields the translated instance of \StageRP.
\end{proof}

\begin{theorem}\label{thm:partial-reflection-set-theoretic-equivalence}
The following pairs of theories are set-theoretically equivalent.
\begin{enumerate}
\item \STU+\StageRPMinus and \LTU+\RPSim.

\item \STU+\StageRP and \LTU+\RP.
\end{enumerate}
\end{theorem}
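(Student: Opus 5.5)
Each pair is proved by establishing the two directions of set-theoretic equivalence separately. Both directions are already supplied by the preceding results, so the proof is mainly an assembly.

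For (1), the direction from stages to levels is \cref{lem:stage-rp-minus-proves-ltu-rp}: \STU+\StageRPMinus proves \LTU+\RPSim. Hence every $\mathcal{L}_\Ur$-sentence provable in \LTU+\RPSim is provable in \STU+\StageRPMinus. For the converse, we use \cref{thm:ltu-rpsim-interprets-stu-stage-rpminus}(1). The translation $\dagger$ leaves object variables, $\in$ and $\Ur$ unchanged, and it commutes with connectives and object quantifiers. So for any $\mathcal{L}_\Ur$-sentence $\varphi$ we have $\varphi^\dagger=\varphi$. If \STU+\StageRPMinus proves $\varphi$, then \LTU+\RPSim proves $\varphi^\dagger$, because it proves the $\dagger$-translations of all axioms and translations preserve derivability. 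That is, it proves $\varphi$ itself.

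For (2), the same pattern applies. For the direction from levels to stages, \cref{thm:ltu-rpsim-interprets-stu-stage-rpminus}(2) gives that \LTU+\RP interprets \STU+\StageRP via $\dagger$. Since $\dagger$ fixes $\mathcal{L}_\Ur$-sentences, every $\mathcal{L}_\Ur$-consequence of \STU+\StageRP is a consequence of \LTU+\RP. For the other direction, we show that \STU+\StageRP proves \LTU+\RP. \StageRP implies \StageRPMinus by \cref{prop:stage-axiom-relations}, and then \cref{lem:stage-rp-minus-proves-ltu-rp} gives \LTU.

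It remains to derive \RP from \StageRP, and I expect this to be the only genuinely new step. Fix an $\mathcal{L}_\Ur$-formula $\varphi$ and parameters $\vec w$. By \Staging, choose stages $\vec{\mathbf t}$ at which the $\vec w$ occur. \StageRP then yields $\mathbf{s}>\vec{\mathbf t}$ such that $\varphi\leftrightarrow\varphi^{\mathbf{s}}$ for all $\vec x\avail\mathbf{s}$. Let $t=\check{\mathbf{s}}$.
\begin{enumerate}
\item By \cref{lem:stu-slice-properties}, $t$ is supertransitive.
\item Each $w_i$ occurs at some $\mathbf{t}_i<\mathbf{s}$, so $w_i\avail\mathbf{s}$ and hence $\vec w\in t$.
\item Since $\varphi$ contains no stage quantifiers, its stage relativization only bounds object quantifiers to $x\avail\mathbf{s}$, that is, to $x\in t$. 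Thus $\varphi^{\mathbf{s}}$ is literally $\varphi^t$.
\end{enumerate}
Therefore $t$ witnesses the instance of \RP for $\varphi$ and $\vec w$, completing the proof.
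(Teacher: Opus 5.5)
Your proposal is correct and follows essentially the same route as the paper: each pair's two directions come from \cref{lem:stage-rp-minus-proves-ltu-rp} and \cref{thm:ltu-rpsim-interprets-stu-stage-rpminus}, and the only new step is deriving \RP from \StageRP by taking the slice $t=\check{\mathbf{s}}$. Two small differences. You use \Staging to pick $\mathbf{s}$ above stages where $\vec w$ occur, so that $\vec w\in t$, which the paper leaves implicit. Also, $\varphi^{\mathbf{s}}$ and $\varphi^t$ are provably equivalent rather than literally identical, since $x\avail\mathbf{s}$ and $x\in\check{\mathbf{s}}$ are distinct formulas.
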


\begin{proof}
By \cref{lem:stage-rp-minus-proves-ltu-rp,thm:ltu-rpsim-interprets-stu-stage-rpminus}, the theories in (1) interpret each other by translations that leave $\mathcal{L}_\Ur$ unchanged. Hence they are set-theoretically equivalent. It remains to verify that \STU+\StageRP proves \LTU+\RP. 

Since \StageRP implies \StageRPMinus, \cref{lem:stage-rp-minus-proves-ltu-rp} gives \LTU. Let $\varphi(\vec{x})$ be an $\mathcal{L}_\Ur$-formula. By \StageRP, there is a stage $\mathbf{s}$ such that $\forall\vec{x}\avail\mathbf{s}\bigl(\varphi(\vec{x})\leftrightarrow\varphi^{\mathbf{s}}(\vec{x})\bigr).$ Let $t=\check{\mathbf{s}}$. $t$ is supertransitive by \cref{lem:stu-slice-properties}, and object quantification in $\varphi^{\mathbf{s}}$ is exactly quantification over $t$. Hence $\forall\vec{x}\in t\bigl(\varphi(\vec{x})\leftrightarrow\varphi^t(\vec{x})\bigr).$ Thus \RP holds.\end{proof}

Now as an application of the set-theoretic equivalence results, we prove that the stage axioms form a strict hierarchy over \STU.

\begin{theorem}\label{thm:stu-implication-diagram-complete}
\Cref{diagram:stu-implications} is complete over \STU.
\end{theorem}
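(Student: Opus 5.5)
We must show that none of the implications missing from \cref{diagram:stu-implications} holds over \STU. Concretely, we need separating models for four claims. First, \StageUnboundedPlus does not imply \StageDirected, and hence implies neither \StageRPMinus nor \StageRP. Second, \StageRPMinus does not imply \StageUnbounded, and hence implies neither \StageUnboundedPlus nor \StageRP. Third, \StageUnbounded does not imply \StageUnboundedPlus. Fourth, \StageRPMinus together with \StageUnboundedPlus does not imply \StageRP. Since an implication that fails also fails for anything weaker on the left or stronger on the right, these four non-implications settle completeness. The strategy is to transfer the level-theoretic non-implications of \cref{thm:ltu-implication-diagram-complete} to stage theory using the set-theoretic equivalences of \cref{thm:stage-level-unbounded-equivalence} and \cref{thm:partial-reflection-set-theoretic-equivalence}.

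For the first claim we reuse the trivial model. Take the structure whose only object is $\emptyset$ and which has a single stage $\mathbf{s}$, with $\emptyset\form\mathbf{s}$ and $<$ empty. \Specification holds at $\mathbf{s}$ because nothing is available there. \StageUnboundedPlus holds vacuously for $w=\emptyset$, and $\emptyset$ is the only set. \StageDirected fails because no stage lies above $\mathbf{s}$.

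For the second claim, take the L\'evy--Vaught model of \LTU+\RPSim in which \Replacement fails. Interpret it as a model of \STU+\StageRPMinus via \cref{thm:ltu-rpsim-interprets-stu-stage-rpminus}(1). By \cref{prop:stage-axiom-consequences}(2), \StageUnbounded would give \Replacement, and the interpretation leaves $\mathcal{L}_\Ur$ unchanged, so \StageUnbounded must fail in the interpreted model.

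For the third and fourth claims, use the Glazer--Yao models of \ZFU. For the third, take a model of \ZFU in which \Collection fails; it satisfies \LTU+\LevelUnbounded. By \cref{thm:stage-level-unbounded-equivalence}(1), more precisely by its proof via the canonical translation, it yields a model of \STU+\StageUnbounded. \StageUnboundedPlus must fail there, since by \cref{prop:stage-axiom-consequences}(3) it would give \Collection. For the fourth, take a model of \ZFU+\RPSim+\Collection in which \RP fails. It satisfies \LTU+\RPSim+\LevelUnboundedPlus. Its $\dagger$-interpretation satisfies \StageRPMinus by \cref{thm:ltu-rpsim-interprets-stu-stage-rpminus}.

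The main obstacle is that the fourth claim also needs \StageUnboundedPlus to hold under the $\dagger$ translation, where stages are semi-levels rather than levels. The plan is to check this directly. Given $\forall x\in w\exists r(\SemiLev(r)\land\varphi^\dagger(x,r))$, first use \Collection to collect semi-level witnesses into a set. Then apply \RPSim, relativized to \Stratification and the defining formula, to find a supertransitive $t$ that contains this set and thinks \Stratification holds. By \cref{lem:level-absoluteness-supertransitive}, $t$ is a semi-level, and the collected semi-levels are members of $t$, which gives the conclusion of \StageUnboundedPlus. Once that is done, \StageRP must fail in this model: it would give \RP by the argument in \cref{thm:partial-reflection-set-theoretic-equivalence}(2), and \RP is an $\mathcal{L}_\Ur$-statement that fails there.
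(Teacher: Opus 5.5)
\textbf{Gap: one missing arrow is never refuted.} Your four non-implications do not settle completeness, because none of them shows that \StageDirected does not imply \StageRPMinus. The monotonicity step cannot supply it. Nothing in the diagram lies strictly between \StageDirected and \StageRPMinus, so this non-implication cannot be inherited from any of your four. Your Claim~1 model has \StageDirected failing. Your Claim~2 and Claim~4 models satisfy \StageRPMinus. Your Claim~3 only compares \StageUnbounded with \StageUnboundedPlus, and you never determine whether \StageRPMinus holds in that model. So you still need a separating model of \STU + \StageDirected + $\neg$\StageRPMinus. Of the fourteen absent arrows, this is the only one your list leaves open.

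\textbf{How to close it.} The paper gets this case from \cref{thm:stage-directed-not-ltu}. The $\omega+\omega$-stage model $\mathcal{M}$ satisfies \StageDirected but not \Stratification, while \StageRPMinus proves \Stratification by \cref{lem:stage-rp-minus-proves-ltu-rp}. Alternatively, your Claim~3 model already witnesses it once you say so. \ZFU proves \LevelDirected, so \StageDirected holds under the canonical translation. Collection follows from Replacement over \ZFU + \UrSet, so any model of \ZFU in which \Collection fails also fails \UrSet. In such a model the translated \StageRPMinus would yield \LevelRPMinus and hence \UrSet, so it fails there.

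\textbf{Relation to the paper's proof.} Apart from this omission, you transfer level-theoretic countermodels through the $*$ and $\dagger$ interpretations. This is a semantic version of the paper's syntactic argument via set-theoretic equivalence, and it is sound. Your direct one-stage model for Claim~1 is correct and slightly more direct than the paper's route through \Pairing.

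\textbf{Claim~4 is unnecessary.} The non-implications $\StageRPMinus\not\to\StageRP$ and $\StageUnboundedPlus\not\to\StageRP$ already follow from your Claims~2 and~1. So the ``main obstacle'' you identify is off the path to the theorem. It would also need a model of \ZFU + \RPSim + \Collection + $\neg$\RP. That is a further independence fact beyond the single-arrow completeness the paper draws from Glazer and Yao, and you would have to source it separately.
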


\begin{proof}
By \cref{thm:stage-directed-not-ltu}, \StageDirected does not imply \Stratification. But \Stratification follows from both \StageUnbounded and \StageRPMinus by \cref{thm:stage-level-unbounded-equivalence} and \cref{lem:stage-rp-minus-proves-ltu-rp}, respectively. Thus \StageDirected proves no other axiom in the diagram. \StageUnbounded does not imply \StageDirected either. This is because the one-object model in Theorem \ref{thm:ltu-implication-diagram-complete} shows that  \LTU + \LevelUnbounded does not prove \Pairing, so neither does \STU + \StageUnbounded by Theorem \ref{thm:stage-level-unbounded-equivalence}; but \StageDirected implies \Pairing by Proposition \ref{prop:stage-axiom-consequences}.

Now pair each remaining axiom $\varphi$ with its level counterpart $\varphi'$. Suppose \STU proves an additional implication from $\varphi$ to $\psi$ in \cref{diagram:stu-implications}. By \cref{thm:partial-reflection-set-theoretic-equivalence}, or  \cref{thm:stage-level-unbounded-equivalence}, we have \STU + $\psi$ proves $\psi'$ and so \STU + $\varphi$ proves $\psi'$. Moreover, every instance of $\psi'$ is an $\mathcal{L}_\Ur$-formula, so by the relevant set-theoretic equivalence theorem again, it follows that \LTU + $\varphi'$ proves $\psi'$. But this contradicts the completeness of \cref{diagram:ltu-implications}, proved in \cref{thm:ltu-implication-diagram-complete}. Therefore, there is no further implication.
\end{proof}
\noindent In fact, the proof shows that adding any two axioms in \Cref{diagram:stu-implications} to \STU respectively yields theories that are not set-theoretically equivalent.

\section{Quasi-Categoricity}\label{section:quasi-categoricity}
Zermelo \cite{Zermelo1930GrenzzahlenMengenbereiche} proved the quasi-categoricity of second-order \ZF: any full model of second-order \ZF is isomorphic to some $V_\kappa$, where $\kappa$ is an inaccessible cardinal. Button and Walsh \cite[Section 8 C]{ButtonWalsh2018PhilosophyModelTheory} proved that second-order \LT is itself quasi-categorical: any full model of second-order \LT is isomorphic to some hierarchy $V_\alpha$. Categoricity is often regarded as a criterion for assessing the robustness of a theory, or of the conception underlying it, since a categorical theory \textit{pins down} the objects it describes up to isomorphism. The result of Button and Walsh, together with the set-theoretic equivalence between \LT and \ST, therefore suggests that the Basic Iterative Story provides a robust conception of set when restricted to pure sets. We have seen that the failure of set-theoretic equivalence in the presence of urelements and considered how extensions of the Basic Iterative Story can remedy it. In this section, we study the models of \LTU and investigate, through the lens of categoricity, the robustness of the Basic Iterative Story and its extensions.

\LTU2 is \LTU formulated in the second-order language in which \Separation is replaced by the second-order axiom \SeparationTwo. A \textit{full model} $M$ of a second-order urelement set theory interprets the second-order variables as ranging over all subsets of its first-order domain. Since \LTU proves \Foundation (by Theorems \ref{thm:stu-foundation} and \ref{thm:ltu-interprets-stu}), the standard argument shows that every full model of \LTU2 is externally well-founded. For simplicity, our background theory will be \ZFU plus an additional assumption that every well-founded model is isomorphic to a transitive one, so it suffices to consider transitive models. But all the arguments for transitive models can be transferred to well-founded models, so the additional assumption is not essential.

For any well-founded model $M$, let the \textit{height} of $M$ be the order type of the ordinals of $M$, and let the \textit{width} of $M$ be the cardinality of $\Ur^M$. A natural notion of quasi-categoricity in the presence of urelements is the following.
\begin{definition}
Let $T$ be a second-order theory in $\mathcal{L}_\Ur$. $T$ is \textit{strongly quasi-categorical} if any two full models of $T$ with the same height and width are isomorphic.
\end{definition}
\noindent The idea is that full models of a strongly quasi-categorical theory should be completely determined by the number of urelements and how far the iterative process has gone. There is, however, a weaker notion of quasi-categoricity.
\begin{definition}
Two models $N$ and $M$ are said to have the same \textit{level-width} if there is a bijection $\pi: \Ur^M \to \Ur^N$ such that for every $A\subseteq \Ur^M$, $$A\in M \text{ iff } \pi[A]\in N.$$
\end{definition}
\noindent That is, the bijection $\pi$ between their urelements induces a bijection between their sets of urelements.
\begin{definition}
Let $T$ be a theory in $\mathcal{L}_\Ur$. $T$ is \textit{weakly quasi-categorical} if any two full models of $T$ with the same height and level-width are isomorphic.
\end{definition}
\noindent In other words, full models of a weakly quasi-categorical theory are determined by the initial hierarchies and how far the iterative process has gone.

Next we show that \LTU2 is not weakly quasi-categorical because it admits uneven models, while \LevelExtension restores weak quasi-categoricity. 
\begin{theorem}\label{thm:unions-of-hierarchies}
Let $\mathcal{S}$ be a non-empty set of non-initial hierarchies. Then $\bigcup \mathcal{S}$ is a full model of \LTU2.
\end{theorem}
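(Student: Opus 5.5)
Let $M=\bigcup\mathcal{S}$, where each member of $\mathcal{S}$ is some $V_\alpha(A)$ with $\alpha\geq 1$. We work in the background \ZFU and check the axioms of \LTU2 in $M$ directly, with $\in$ and $\Ur$ interpreted as the real relations restricted to $M$.

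\textbf{Transitivity and the easy axioms.} First, $M$ is transitive, since every hierarchy is transitive. So \UrDef and \Extensionality hold in $M$, because they hold in the ambient universe and $M$ is transitive. For \SeparationTwo, take a set $x\in M$, say $x\in V_\alpha(A)$, and an arbitrary subset $X\subseteq M$. Then $x\cap X\subseteq x$, and since hierarchies are potent, $x\cap X\in V_\alpha(A)\subseteq M$. So $M$ is closed under arbitrary subsets of its sets, which is exactly what fullness requires.

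\textbf{Absoluteness of levels.} The main step is to show that $\Lev$ is absolute for $M$, so that \Stratification in $M$ reduces to \cref{thm:levels-exactly-hierarchies}. The notions transitivity, $\subseteq$, $\ker$ and $\pot$ are built from bounded quantifiers together with the universal quantifier over transitive supersets inside $\ker$. I would handle these as in \cref{lem:level-absoluteness-supertransitive}. Given a set $q\in M$, fix a hierarchy $V_\alpha(A)\in\mathcal{S}$ with $q\in V_\alpha(A)$. Since $\alpha\geq 1$, we get $q\subseteq V_\beta(A)$ for some $\beta<\alpha$, and also $A\in M$. Then every external transitive $y\supseteq q$ can be cut down to $y\cap V_\beta(A)$, which lies in $M$ by potency. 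Hence $\ker^M(q)=\ker(q)$. Also $\pot(q)\subseteq V_\beta(A)$, so $\pot(q)\in V_\alpha(A)\subseteq M$, and $M$ computes potentiations correctly. It follows that historyhood and levelhood are absolute for elements of $M$: a witnessing history $h$ for a level $r\in M$ satisfies $h\subseteq r$, so $h\in M$ by potency. I expect this absoluteness bookkeeping, in particular the kernel clause of histories, to be the main obstacle, because $M$ is not itself a level and the proof of \cref{lem:level-absoluteness-supertransitive} must be rerun locally inside the individual hierarchies.

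\textbf{Stratification.} Take a set $x\in M$, with $x\in V_\alpha(A)$ for some $V_\alpha(A)\in\mathcal{S}$. Since $\alpha\geq1$, $x\subseteq V_\beta(A)$ for some $\beta<\alpha$. This $V_\beta(A)$ belongs to $M$ and is a level by \cref{thm:levels-exactly-hierarchies}, hence a level in $M$ by absoluteness. For a urelement $a\in M$, lying in some $V_\alpha(A)\in\mathcal{S}$, we have $a\in A=V_0(A)\in M$, and $V_0(A)$ is a level. This is exactly where non-initiality is used: it guarantees that the bounding hierarchies $V_\beta(A)$ and $A$ themselves are elements of $M$. Together with the previous steps, this shows $M$ is a full model of \LTU2.
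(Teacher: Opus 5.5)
Your proof is correct and follows the paper's argument. You get \UrDef, \Extensionality and \SeparationTwo from the transitivity and potency of hierarchies, show that levelhood is absolute for $M$, and witness \Stratification by $V_\beta(A)$ and $A$, using non-initiality exactly where the paper does. The only difference is that you rerun the absoluteness argument locally, whereas the paper notes that $M$ is supertransitive with (\StratificationMinus)$^M$ (witnessed by the $V_\beta(A)\in M$) and cites \cref{lem:level-absoluteness-supertransitive}; that lemma does not require the reflecting set to be a level, so the obstacle you anticipated does not arise.
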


\begin{proof}
Let $M=\bigcup\mathcal{S}$, with membership and urelementhood inherited from the ambient universe. Thus \UrDef and \Extensionality hold in $M$. \SeparationTwo holds in $M$: if $y \subseteq x \in V_\alpha(A) \in M$ for some hierarchy $V_\alpha(A)\in\mathcal{S}$, the potency of $V_\alpha(A)$ gives $y\in V_\alpha(A)\subseteq M$.
$M$ is a union of hierarchies and hence supertransitive. Moreover, $M \models$ \StratificationMinus, since every set in $M$ is a subset of some supertransitive set, and being supertransitive is absolute for supertransitive sets. Thus levelhood is absolute for $M$ by \cref{lem:level-absoluteness-supertransitive}. To verify that $M \models$ \Stratification, let $x \in M$. If $x$ is a set, then $x \subseteq V_\beta (A) \in V_\alpha (A)$ for some non-initial hierarchy $V_\alpha (A) \in M$; $V_\beta (A)$ is then a level in $M$ by Theorem \ref{thm:levels-exactly-hierarchies} which contains $x$.  If $x$ is some urelement $a$ in $M$, $a \in A \in V_\alpha(A) \in \mathcal{S}$ for some non-initial hierarchy $V_\alpha(A)$. So $M$ thinks that $a$ is in the level $A$.\end{proof}

\begin{corollary}
\LTU2 is not weakly quasi-categorical.
\end{corollary}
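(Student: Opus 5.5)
We will exhibit two full models of \LTU2 which have the same height and the same level-width but are not isomorphic, using \cref{thm:unions-of-hierarchies}. Fix two distinct urelements $a_0,a_1$, and let
$$\mathcal{N}=V_{\omega+\omega}(\{a_0\})\cup V_\omega(\{a_1\}),\qquad \mathcal{N}'=V_{\omega+\omega}(\{a_0\})\cup V_{\omega+\omega}(\{a_1\}).$$
Both are unions of non-empty sets of non-initial hierarchies, so by \cref{thm:unions-of-hierarchies} both are full models of \LTU2.

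First we check that they have the same height and level-width. In both models the urelements are exactly $a_0,a_1$. The sets of urelements in each model are $\emptyset,\{a_0\},\{a_1\}$, and not $\{a_0,a_1\}$, since every set in either model lies in a hierarchy over a single singleton. So the identity map on $\{a_0,a_1\}$ witnesses the same level-width. The pure sets of both models are exactly $V_{\omega+\omega}$, because $V_\alpha(\{a\})$ contains $V_\alpha$ and $V_\omega\subseteq V_{\omega+\omega}$. Hence the ordinals of both models are exactly those below $\omega+\omega$, so both have height $\omega+\omega$.

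Next we show $\mathcal{N}\not\cong\mathcal{N}'$. Any isomorphism maps urelements to urelements, so it induces a permutation of $\{a_0,a_1\}$. By the absoluteness of levelhood for these supertransitive models (as in the proof of \cref{thm:unions-of-hierarchies}, via \cref{lem:level-absoluteness-supertransitive}), together with \cref{thm:levels-exactly-hierarchies}, the levels with kernel $\{a\}$ in each model are exactly the hierarchies $V_\alpha(\{a\})$ belonging to the model. In $\mathcal{N}'$, each of $\{a_0\}$ and $\{a_1\}$ is the kernel of a level with infinitely many levels of the same kernel below it (for instance $V_{\omega}(\{a_i\})\in V_{\omega+1}(\{a_i\})$). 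In $\mathcal{N}$, the levels with kernel $\{a_1\}$ are only $V_n(\{a_1\})$ for $n<\omega$, so every level with kernel $\{a_1\}$ has only finitely many same-kernel levels below it, while $\{a_0\}$ behaves as in $\mathcal{N}'$. This is a property preserved by isomorphism: being a level, having a given kernel, and membership are all preserved, and finiteness of the set of $\in$-predecessors is an external property of the model structure. So no permutation of the urelements can extend to an isomorphism.

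The main obstacle is making the invariance argument precise, specifically that "levels of the model" and their kernels are computed correctly inside $\mathcal{N}$ and $\mathcal{N}'$. This comes down to the absoluteness of \cref{lem:level-absoluteness-supertransitive} for these unions, which the proof of \cref{thm:unions-of-hierarchies} already established. The remaining checks, on height and on which sets of urelements exist, are routine rank computations.
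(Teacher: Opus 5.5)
Your proof is correct and follows essentially the paper's route. Both arguments use \cref{thm:unions-of-hierarchies} to build two unions of non-initial hierarchies with the same height and the same sets of urelements, differing only in how far one hierarchy is iterated. The paper instead takes $V_\omega(\{a\})\cup V_1(\{b,c\})$ versus $V_\omega(\{a\})\cup V_2(\{b,c\})$, so that $\{b,c\}$ is definable as the unique two-element set of urelements and a single first-order sentence (whether it has a powerset) separates the models; your two-singleton construction must instead handle the $a_0\leftrightarrow a_1$ symmetry with an invariant over both urelements, computing levels via \cref{lem:level-absoluteness-supertransitive}.
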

\begin{proof}
Now consider three urelements $a,b,c$  and put $A=\{a\}$ and $B=\{b,c\}$. Let
$$M=V_\omega(A)\cup V_1(B),\qquad N=V_\omega(A)\cup V_2(B).$$
By \cref{thm:unions-of-hierarchies}, they are both full  models of \LTU2 with height $\omega$. They have the same sets of urelements and hence the same level-width. $B$ is definable in both models as the unique two-element set of urelements. But $N$ thinks that $B$ has a powerset, while $M$ does not. So they are not isomorphic.\end{proof}

\begin{lemma}\label{lem:level-extension-reconstruction}
Let $M$ be a transitive full model of \LTU2 + \LevelExtension. Then
$$M=\bigcup\{V_\alpha(A)\mid A\in M\land A\subseteq \Ur^M\},$$
where $\alpha$ is the height of $M$.
\end{lemma}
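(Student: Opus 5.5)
The plan is to reduce everything to actual hierarchies in the ambient \ZFU, via level absoluteness, and then compare heights using the pure hierarchy. Throughout, $M$ is transitive and full, so by \SeparationTwo every subset of a member of $M$ is in $M$. Hence $M$ is closed under subsets of its members, and supertransitivity is absolute for $M$.

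The first step is to identify the levels of $M$. Since $M\models\LTU$, every set in $M$ is included in some level of $M$, and levels are supertransitive. So $M\models\StratificationMinus$ in the sense required by \cref{lem:level-absoluteness-supertransitive}. The proof of that lemma only used that $t$ is transitive, closed under subsets of members, and satisfies $(\StratificationMinus)^t$, so it goes through verbatim with $M$ in place of $t$. Hence for $r\in M$, $\Lev(r)^M\leftrightarrow\Lev(r)$, and kernels are computed correctly in $M$. By \cref{thm:levels-exactly-hierarchies}, the levels of $M$ are exactly the hierarchies $V_\beta(B)$ that belong to $M$. For such a level, its kernel $B$ also lies in $M$: by \cref{lem:history-kernel}, either $B$ is the level itself or $B$ is a member of it.

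The second step, which is the heart of the argument, is to fix a set $A\in M$ of urelements and show that $V_\beta(A)\in M$ for every $\beta<\alpha$. Since $M$ is transitive, its ordinals are exactly the ordinals $\beta<\alpha$. So let $\beta<\alpha$, so that $\beta\in M$. By \Stratification in $M$, there is a level $V_\gamma(B)\in M$ with $\beta\subseteq V_\gamma(B)$. I then proceed in three moves.
\begin{enumerate}
\item Apply the \hyperref[lem:level-shrink]{Level-Shrink Lemma} inside $M$ with the empty kernel. This yields the level $V_\gamma(B)^{\emptyset}=V_\gamma(\emptyset)$ in $M$, which still contains the pure set $\beta$. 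Consequently $\gamma\geq\beta$ (modulo an off-by-one in the indexing, which I will handle by using $\beta+1$ if needed).
\item Apply \LevelExtension in $M$ to get a level $V_\delta(A)\in M$ with $V_\gamma(\emptyset)\subseteq V_\delta(A)$. Comparing ranks gives $\delta\geq\gamma\geq\beta$.
\item Conclude from $V_\beta(A)=V_\delta(A)$ or $V_\beta(A)\in V_\delta(A)$, together with the transitivity of $M$, that $V_\beta(A)\in M$.
\end{enumerate}
Taking the union over $\beta<\alpha$ gives $V_\alpha(A)\subseteq M$. I expect the main obstacle to be this rank comparison: checking that a pure ordinal of $M$ really forces the \emph{pure} hierarchy, and hence, via \LevelExtension, the $A$-hierarchy, to reach height $\beta$. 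This is precisely where unevenness, as in the countermodels after \cref{thm:unions-of-hierarchies}, would otherwise break the argument.

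The third step is the reverse inclusion. Let $x\in M$. If $x$ is a urelement, then \Stratification in $M$ puts $x$ in a level of $M$. Its kernel $B\in M$ is a set of urelements containing $x$, and $B=V_0(B)\subseteq V_\alpha(B)$. If $x$ is a set, then $x\subseteq V_\gamma(B)\in M$ for some hierarchy. Its index satisfies $\gamma<\alpha$, because $\gamma\subseteq V_\gamma(\emptyset)$, which is obtained by shrinking, forces $\gamma\in M$. Hence $x\in V_{\gamma+1}(B)\subseteq V_\alpha(B)$, since $\alpha$ is a limit, or at any rate $x\in V_\alpha(B)$ by the union clause. Since $B\in M$, this proves the displayed equality.
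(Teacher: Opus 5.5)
Your proposal is essentially the paper's proof. For $M\subseteq\bigcup\{V_\alpha(A)\mid A\in M\land A\subseteq\Ur^M\}$ it uses level absoluteness and \cref{thm:levels-exactly-hierarchies}, with the index of a level in $M$ itself lying in $M$; for the converse it shrinks to the empty kernel, applies \LevelExtension to kernel $A$, and compares ranks.

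One small repair is needed, because the height need not be a limit. For example, $V_{\omega+1}$ is a transitive full model of \LTU2 + \LevelExtension of height $\omega+1$. When $\alpha$ is a successor, $\bigcup_{\beta<\alpha}V_\beta(A)$ misses the top layer of $V_\alpha(A)$. So conclude instead, as the paper does, that each set in $V_\alpha(A)$ is a subset of some $V_\beta(A)\in M$ with $\beta<\alpha$, and hence lies in $M$ by fullness. In your last step, $V_{\gamma+1}(B)\subseteq V_\alpha(B)$ holds simply because $\gamma+1\le\alpha$, not because $\alpha$ is a limit. Finally, no off-by-one arises earlier, since $\beta\subseteq V_\gamma(\emptyset)$ iff $\beta\le\gamma$.
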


\begin{proof}
First note that \SeparationTwo makes $M$ supertransitive and so levelhood is absolute for $M$ by \cref{lem:level-absoluteness-supertransitive}. For the inclusion $M\subseteq\bigcup\{V_\alpha(A)\mid A\in M\land A\subseteq\Ur^M\}$, let $x\in M$ be a set. By \Stratification in $M$, there is a level $s \in M$ with $x \subseteq s$. Let $A = \ker(s)$. Since $A \subseteq s\in M$ and $M$ is supertransitive, $A \in M$; moreover, $A \subseteq \Ur^M$. By \cref{thm:levels-exactly-hierarchies}, $s = V_\beta (A)$ for some ordinal $\beta$. Since $\beta \subseteq V_\beta (A) \in M$ and $M$ is supertransitive, $\beta \in M$ and hence $\beta < \alpha$. Thus, $x \subseteq V_\beta(A) \in V_\alpha(A)$ and so $x \in V_\alpha(A)$. The urelement case is immediate by \Stratification.

Conversely, let $A\in M$ be a set of urelements and let $x\in V_\alpha(A)$ be a set. Fix $\beta<\alpha$ such that $x\subseteq V_\beta(A)$. $\beta \in M$ so by \Stratification, there is a level $r\in M$ such that $\beta\subseteq r$. The empty-kernel restriction $r^\emptyset$ is a level by the \hyperref[lem:level-shrink]{Level-Shrink Lemma}, and $\beta\subseteq r^\emptyset$. By \LevelExtension in $M$, there is a level $t\in M$ such that $r^\emptyset\subseteq t$ and $\ker(t)=A.$ By \cref{thm:levels-exactly-hierarchies}, $t=V_\gamma(A)$ for some ordinal $\gamma$. Since $\beta\subseteq r^\emptyset\subseteq t$, we have $\beta\leq\gamma$, so $V_\beta(A)\subseteq t$. Therefore $x \subseteq V_\beta(A) \subseteq t \in M$ and hence $x\in M$.\end{proof}

\begin{theorem}\label{thm:level-extension-weak-quasi-categoricity}
\LTU2 + \LevelExtension is weakly quasi-categorical.
\end{theorem}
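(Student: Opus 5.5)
Let $M$ and $N$ be full models of \LTU2 + \LevelExtension with the same height $\alpha$ and the same level-width, witnessed by a bijection $\pi:\Ur^M\to\Ur^N$ such that $A\in M$ iff $\pi[A]\in N$ for every $A\subseteq\Ur^M$. By the standing convention we may assume $M$ and $N$ are transitive. By \cref{lem:level-extension-reconstruction},
$$M=\bigcup\{V_\alpha(A)\mid A\in M\land A\subseteq\Ur^M\},\qquad N=\bigcup\{V_\alpha(B)\mid B\in N\land B\subseteq\Ur^N\}.$$
So each model is completely determined by its height together with its family of initial levels. The plan is to lift $\pi$ to an isomorphism.

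First, extend $\pi$ to $\hat\pi$ on the whole ambient universe by $\in$-recursion: $\hat\pi(a)=\pi(a)$ for $a\in\Ur^M$, and $\hat\pi(x)=\{\hat\pi(y)\mid y\in x\}$ for sets $x$. Urelements outside $\Ur^M$ never occur below any element of $M$, since $M$ is transitive, so only the values on elements of $M$ matter. I will then check that $\hat\pi$ is injective on $M$ by $\in$-induction, using \Extensionality and the injectivity of $\pi$. The only point needing care is that a set is never sent to a urelement; this holds because $\hat\pi$ of a set is a set. It also follows immediately that $\hat\pi$ preserves $\in$ and $\Ur$ in both directions.

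Next, by induction on $\beta$, I will show that $\hat\pi[V_\beta(A)]=V_\beta(\pi[A])$ for every $A\subseteq\Ur^M$. The base case is $\hat\pi[A]=\pi[A]$. At successor stages, $\hat\pi$ maps $\Pow(V_\beta(A))$ onto $\Pow(V_\beta(\pi[A]))$: by the induction hypothesis $\hat\pi$ is a bijection between $V_\beta(A)$ and $V_\beta(\pi[A])$, and every subset $y$ of $V_\beta(\pi[A])$ is $\hat\pi(\hat\pi^{-1}[y])$. Limit stages are unions. Since $A\in M$ iff $\pi[A]\in N$, and $\pi$ is a bijection of the urelements, the map $A\mapsto\pi[A]$ is a bijection between the initial levels of $M$ and those of $N$. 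Hence
$$\hat\pi[M]=\bigcup\{V_\alpha(\pi[A])\mid A\in M\}=\bigcup\{V_\alpha(B)\mid B\in N\}=N.$$
Because both models are full, the second-order part is determined, so $\hat\pi\restriction M$ is an isomorphism.

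The main obstacle is making sure that the height in \cref{lem:level-extension-reconstruction} is the same $\alpha$ for every kernel, so that both unions use a common ordinal. This is exactly what \LevelExtension delivers through that lemma, and it is what fails for the uneven models in the preceding corollary. Since the lemma is stated earlier, it can be applied directly, and the remaining work is the routine recursion above. I also need the small check that $\hat\pi(x)$ is a set whenever $x$ is a set: this holds by \UrDef when $x=\emptyset$, and because $\hat\pi(x)$ is a genuine set in the ambient \ZFU otherwise.
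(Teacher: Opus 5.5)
Your proof is correct and follows the paper's argument essentially step for step: both use \cref{lem:level-extension-reconstruction} to write each model as $\bigcup\{V_\alpha(A)\}$ over its initial levels, extend $\pi$ by $\in$-recursion via $\hat\pi(x)=\hat\pi[x]$, and combine $\hat\pi[V_\alpha(A)]=V_\alpha(\pi[A])$ with the level-width condition to get $\hat\pi[M]=N$. The only cosmetic difference is that the paper defines the extension on $V(\Ur^M)$ rather than on the whole ambient universe, so it never has to say what happens to urelements outside $\Ur^M$.
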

\begin{proof}
Let $M$ and $N$ be two transitive full models of \LTU2 + \LevelExtension with the common height $\alpha$, and let $\pi:\Ur^M\longrightarrow \Ur^N$ witness that they have the same level-width. By \cref{lem:level-extension-reconstruction} we have
\begin{align*}
M&=\bigcup\{V_\alpha(A)\mid A\in M\land A\subseteq\Ur^M\},\\
N&=\bigcup\{V_\alpha(B)\mid B\in N\land B\subseteq\Ur^N\}.
\end{align*}
Let $V(\Ur^M)$ denote the cumulative universe generated by $\Ur^M$. Using well-founded recursion, extend $\pi$ (externally) by letting $\pi(x)=\{\pi(y)\mid y\in x\} = \pi [x]$ for every set $x$ in $V(\Ur^M)$. $\pi$ preserves membership and urelementhood and is injective by an easy induction. It remains to show that $\pi$ maps $M$ onto $N$.

Note that for every $A\subseteq\Ur^M$, $\pi$ sends $V_\alpha(A)$ bijectively onto $V_\alpha (\pi [A])$. Now let $x\in M$ be a set. There is some $A\in M$ with $A\subseteq\Ur^M$ such that $x\in V_\alpha(A)$. So $\pi(x) \in V_\alpha(\pi[A])$. Since $\pi[A] \in N$, $\pi(x) \in N$. Thus $\pi$ maps $M$ into $N$. Conversely, let $y\in N$. There is some $B\in N$ with $B\subseteq\Ur^N$ such that $y\in V_\alpha(B)$. Put $A=\pi^{-1}[B]$. Then $A\in M$, and $\pi$ maps $V_\alpha(A)$ bijectively onto $V_\alpha(B)$. Hence $y=\pi(x)$ for some $x\in V_\alpha(A)\subseteq M$. Thus $\pi$ is onto $N$. This completes the proof.\end{proof}

\begin{corollary}\label{cor:extensions-weakly-quasi-categorical}
\LTU2 + $\varphi$ is weakly quasi-categorical, where $\varphi$ is any principle in \cref{diagram:ltu-implications}.
\end{corollary}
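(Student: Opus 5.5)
By \cref{thm:level-extension-weak-quasi-categoricity}, it suffices to show that every principle $\varphi$ in \cref{diagram:ltu-implications} implies \LevelExtension over \LTU2, at least in full models. Weak quasi-categoricity concerns only full models, and any full model of \LTU2 $+\,\varphi$ that satisfies \LevelExtension is a full model of \LTU2 + \LevelExtension. The theorem then applies directly.

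I will organize the argument along the diagram. Every principle implies either \LevelUnbounded or \LevelDirected. Specifically, \RP, \LevelUnboundedPlus and \LevelUnbounded all imply \LevelUnbounded, while \RP, \RPSim and \LevelDirected all imply \LevelDirected. This gives two cases.

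\emph{The \LevelUnbounded case.} Here \cref{diagram:ltu-implications} gives \Replacement, and \cref{lem:level-extension} gives \LevelExtension outright. First-order \Replacement holds in the model because it is an \LTU-consequence of \LevelUnbounded, and \LTU2 proves every instance of \Separation, so the first-order lemma applies. Alternatively, \RPSim yields \LevelExtension directly by \cref{lem:rpsim-level-extension}.

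\emph{The \LevelDirected case.} This is the main obstacle, since \LevelDirected alone is not covered by the earlier lemmas. My plan is to prove directly that \LevelDirected implies \LevelExtension over \LTU.

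Let $s$ be a level and let $A$ be a set of urelements with $\ker(s)\subseteq A$. By \Stratification, $A\subseteq l$ for some level $l$; one could also note that $A$ itself is an initial level. Apply \LevelDirected to $s$ and $A$ to get a level $t$ with $s\in t$ and $A\in t$.

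Now $\ker(t)\supseteq A$, but it may be larger. To fix this, use the \hyperref[lem:level-shrink]{Level-Shrink Lemma} and set
\[
t' = t^A = \{x\in t\mid \Set(x)\land\ker(x)\subseteq A\}\cup A.
\]
This is a level with $\ker(t')=A$.

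It remains to check that $s\subseteq t'$. Take $x\in s$. If $x$ is a set, then $x\in t$ by transitivity of $t$, and $\ker(x)\subseteq\ker(s)\subseteq A$ by \cref{prop:kernelbasics}(2), so $x\in t'$. If $x$ is a urelement, then $x\in\ker(s)\subseteq A\subseteq t'$. Hence $s\subseteq t'$, which establishes \LevelExtension.

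This argument needs only \LTUMinus-level facts and \Stratification, so it goes through in \LTU2.

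Since every principle in the diagram falls under one of the two cases, each of them yields \LevelExtension, and \cref{thm:level-extension-weak-quasi-categoricity} gives weak quasi-categoricity for \LTU2 $+\,\varphi$.
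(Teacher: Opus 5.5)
Your proposal is correct and follows the paper's proof: both reduce to \cref{thm:level-extension-weak-quasi-categoricity} by noting that every principle in the diagram yields either \Replacement, which gives \LevelExtension by \cref{lem:level-extension}, or \LevelDirected. In the \LevelDirected case, both arguments take a level having $s$ and $A$ as members and shrink it to kernel $A$ with the \hyperref[lem:level-shrink]{Level-Shrink Lemma}. Your case split leaves out the \Collection and \Replacement nodes of the diagram, but this is harmless, since they are equivalent to \LevelUnboundedPlus and \LevelUnbounded.
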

\begin{proof}
In view of \cref{thm:level-extension-weak-quasi-categoricity}, it suffices to show that every axiom in \cref{diagram:ltu-implications} implies \LevelExtension. By \cref{lem:level-extension}, \Replacement implies \LevelExtension. It remains to show that \LevelDirected implies \LevelExtension over \LTU.

Fix a level $s$ and a set $A$ of urelements such that $\ker(s)\subseteq A$. By \LevelDirected, there is a level $m$ such that $s,A\in m$. So $A\subseteq\ker(m)$ and the \hyperref[lem:level-shrink]{Level-Shrink Lemma} gives a level $m^A$ with kernel $A$ such that
$$m^A = \{x \in m \mid \Set(x) \land \ker(x) \subseteq A\} \cup A.$$
Thus, $s \subseteq m^A$, as required.\end{proof}

We now turn to the stronger notion of quasi-categoricity. The key axiom here is second-order \Replacement.
\begin{itemize}
\item []  (\Replacement2) \( \forall F, w (\text{fun} (F) \land \Set(w)  \to F[w] \text{ is a set})\).
\end{itemize}
\noindent Here $\text{fun} (F)$ abbreviates ``$F$ is a class function defined on all objects''. Over \LTU2, \Replacement2 is equivalent to second-order \LevelUnbounded, which says that for any class function $F$ and set $w$, $F[w]$ is contained in some level.

\begin{theorem}\label{thm:unbounded-strong-quasi-categoricity}
Assume \AC. \LTU2 + \Replacement2\ is strongly quasi-categorical.
\end{theorem}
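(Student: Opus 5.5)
The plan is to show that a transitive full model $M$ of \LTU2 + \Replacement2 is completely determined by its height $\alpha$ and width $\kappa$. Concretely, I would aim to prove that
$$M=\bigcup\{V_\alpha(A)\mid A\subseteq\Ur^M\land |A|<\alpha\},$$
where cardinalities are computed in the \AC background. Granting this, take two such models $M,N$ with the same height $\alpha$ and the same width. Any bijection $\pi:\Ur^M\to\Ur^N$ preserves the cardinality of subsets, so $A\in M$ iff $\pi[A]\in N$. This is exactly the same level-width condition, and the argument of \cref{thm:level-extension-weak-quasi-categoricity} then produces the isomorphism. Two preliminary reductions make \cref{lem:level-extension-reconstruction} available. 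First, \Replacement2 implies first-order \Replacement, hence \LevelExtension by \cref{lem:level-extension}. Second, \SeparationTwo makes $M$ supertransitive, so levelhood is absolute by \cref{lem:level-absoluteness-supertransitive}. Given these, the whole task reduces to identifying which sets of urelements belong to $M$.

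For the inclusion ``$|A|<\alpha\Rightarrow A\in M$'', fix $A\subseteq\Ur^M$ and $\beta<\alpha$ with $|A|=|\beta|$. By \AC in the metatheory there is a bijection $f:\beta\to A$. Extend $f$ to a total class function $F$ on $M$ by sending everything outside $\beta$ to some fixed object. Since $M$ is full, $F$ is a legitimate second-order object. We have $\beta\in M$, and \Replacement2 yields $F[\beta]$ as a set of $M$; removing the default value by \Separation if necessary gives $A\in M$. This is the one place where \AC is used, and it is where one expects the failure of the stronger notion without choice.

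For the converse, let $A\in M$ be a set of urelements. Assuming $\alpha$ is a limit, $V_\alpha(A)\subseteq M$ by \cref{lem:level-extension-reconstruction}, so $M$ contains all relations on $A$ (pairs are available inside $V_\alpha(A)$). In particular $M$ contains all well-orders of subsets of $A$. The map sending each such well-order to its order type is a class function on a set of $M$; first-order definability of the transitive collapse inside $M$ requires some care, but the map is available in any case as a second-order object. By \Replacement2 its image is a set of ordinals of $M$. That image contains every ordinal $<|A|^+$, so $|A|^+\le\alpha$ and hence $|A|<\alpha$.

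The main obstacle I anticipate is the bookkeeping about the height. I need $\alpha$ to be a limit ordinal, indeed a regular one, so that $V_\alpha(A)$ really lies in $M$ and ordinals can be collected as above. I would derive regularity from \Replacement2 applied to a cofinal map $\gamma\to\alpha$ with $\gamma<\alpha$: its image would be a set of ordinals bounded by a level of $M$, contradicting that $\alpha$ is the height. I would also have to treat the degenerate models separately: models consisting only of urelements, and models such as $V_1(A)$ whose height is a successor. In these cases I would check directly that height and width already pin the model down, or else note that they fall under the same description.
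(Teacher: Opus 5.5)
Your skeleton is the paper's. The direction $|A|<\alpha\Rightarrow A\in M$, via a background bijection $\beta\to A$ and \Replacement2, is exactly the paper's argument, and so is the final appeal to \LevelExtension and \cref{thm:level-extension-weak-quasi-categoricity}. The gap is in the converse, $A\in M\Rightarrow|A|<\alpha$.

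In a full model, a second-order object is a class of objects of $M$. So the map $R\mapsto\mathrm{ot}(R)$ can be fed to \Replacement2 only if all its values already belong to $M$. That is exactly what is in question. If $|A|\geq\alpha$, some well-order of a subset of $A$, which does lie in $M$, has order type $\alpha\notin M$, and your class function simply does not exist in $M$. The real issue is therefore not first-order definability of the collapse, but where its values live. As written the step is circular. Indeed, once you know the order types are in $M$, you get $|A|^+\le\alpha$ without using \Replacement2 at all.

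One repair is to prove, by induction along each well-order $R\in M$, that every initial segment (and then $R$ itself) has its order type in $M$. At an $R$-least failure, the earlier order types lie in $M$ by minimality, so a genuine class function collects them by \Replacement2, and the resulting set is the missing order type.

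The paper's route is much shorter and avoids coding relations altogether. Suppose $A\neq\emptyset$ and $|A|\geq|\alpha|$, and fix a surjection $g\colon A\to\alpha$. Since $M$ is transitive, every ordinal below $\alpha$ is in $M$. So $g$, extended by the default value $\emptyset$, is a legitimate class function, and \Replacement2 gives $\alpha=g[A]\in M$, contradicting that $\alpha$ is the height. Hence $|A|<|\alpha|\le\alpha$.

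This argument needs neither $V_\alpha(A)\subseteq M$ nor a limit or regular height, so your case analysis of the height and of degenerate models becomes unnecessary. Incidentally, $V_1(A)$ with $A\neq\emptyset$ is not a model of \Replacement2 at all: the constant map to $\emptyset$ sends $\{a\}$ to $\{\emptyset\}\notin V_1(A)$.
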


\begin{proof}
Let $M$ and $N$ be transitive full models of \LTU2 + \Replacement2 with common height $\alpha$, and fix a bijection $\pi:\Ur^M\longrightarrow \Ur^N$. We claim that for every $A\subseteq \Ur^M$,
$$A\in M\quad\longleftrightarrow\quad\pi[A]\in N.$$
Suppose $A\in M$. There is no surjection from $A$ onto $\alpha$, since otherwise \Replacement2 in $M$ would place $\alpha$ in $M$. By \AC in the background theory, $A$ is therefore equinumerous with some $\beta<\alpha$, and so $\beta \in N$. The corresponding bijection from $\beta$ onto $\pi[A]$ then places $\pi[A]$ in $N$ by  \Replacement2 in $N$. The converse follows by applying the same argument to $\pi^{-1}$. 

By \cref{lem:level-extension}, both models satisfy \LevelExtension. Thus $M$ and $N$ are two full models of \LTU2 + \LevelExtension of the same height and the same level-width. So they are isomorphic by \cref{thm:level-extension-weak-quasi-categoricity}.\end{proof}
\noindent The use of \AC in the background theory is essential: assuming the consistency of an inaccessible cardinal, second-order \ZFU (\ZFU2) is consistently not strongly quasi-categorical. In fact, in the choiceless context, two full models of \ZFU2 with the same height and the same urelements can disagree on whether the urelements form a set (see \cref{thm:zfu-not-strongly-quasi-categorical} in Appendix). This is unsurprising: no axiom of \ZFU2 is able to determine which subclasses of urelements become the initial levels. So having the same number of urelements from the \textit{external perspective} cannot determine how two models generate their sets. That said, the external use of \AC simply reduces the situation to weak quasi-categoricity, as the proof of Theorem \ref{thm:unbounded-strong-quasi-categoricity} shows. Thus strong quasi-categoricity for urelement set theory is not a metatheoretically neutral notion.

We conclude by considering how first-order reflection principles may yield strong quasi-categoricity. One must decide whether class parameters are allowed. Write \RPSimCl and \RPCl for the corresponding schemes that allow class parameters, where a class parameter $X$ is reinterpreted in the reflecting set $t$ as $X\cap t$. Class parameters make these principles substantially stronger. Suppose that $R$ is a class relation and $w$ is a set such that $\forall x\in w\,\exists y\,R(x,y)$. Applying \RPSimCl to this first-order formula gives a supertransitive set $t$ containing $w$ such that $\forall x\in w\,\exists y\in t\,R(x,y).$ Thus $t$ is a collection set. Together with the class-parameter versions of the implications proved in \cref{prop:LTUImplication}, this gives
$$\RPCl\ \longrightarrow\ \RPSimCl\ \longrightarrow\ \CollectionTwo\ \longrightarrow\ \Replacement2.$$
Hence, assuming \AC in the background theory, \LTU2 with any of these reflection principles is strongly quasi-categorical by \cref{thm:unbounded-strong-quasi-categoricity}. Without class parameters the situation is different: \LTU2 + \RP is not strongly quasi-categorical even assuming \AC (see \cref{thm:zfu-not-strongly-quasi-categorical} in Appendix). Consequently, while \LevelExtension suffices for weak quasi-categoricity, it does not yield strong quasi-categoricity. In other words, \Replacement2, motivated by the unbounded principle, appears essential for strong quasi-categoricity in the presence of urelements.

\section{Concluding Remarks}
We investigated the iterative conception of set in its most basic and general form. We argued that \UrStage, the assumption that all urelements are available at one stage, has often been mistakenly regarded as part of the basic iterative conception. The iterative conception, as articulated in the Basic Iterative Story, is neutral about the number of urelements and whether they form a set.

This neutrality has mathematical and philosophical consequences. The Basic Iterative Story can be formalized in terms of stages or levels, yielding the theories \STU and \LTU. In the presence of urelements, the two formalizations are no longer set-theoretically equivalent. \STU cannot prove \LTU's key axiom, \Stratification, providing the first indication that the Basic Iterative Story fails to provide a robust conception of set in its most general form.

We then considered different extensions of the Basic Iterative Story inspired by the ideas of directedness, unboundedness, and reflection. We isolated the core principle \LevelExtension and showed that all the stronger principles imply it. As a result, the unboundedness and reflection principles restore set-theoretic equivalence between the corresponding stage and level theories. Moreover, using the relevant independence results in \ZFU together with these set-theoretic equivalences, we showed that these further principles form two strict hierarchies.

The discussion of quasi-categoricity provides further evidence that the Basic Iterative Story is not robust. We distinguished two notions of quasi-categoricity. \LTU2 is not even weakly quasi-categorical, while adding \LevelExtension restores weak quasi-categoricity. The stronger notion of quasi-categoricity raises metatheoretic issues. Assuming \AC in the background theory, \Replacement2 yields strong quasi-categoricity. The background assumption of \AC cannot simply be omitted: in the choiceless context, two full models of \ZFU2 with the same height and the same urelements can disagree on whether the urelements form a set.

\appendix
\renewcommand{\sectionname}{}
\renewcommand{\thesection}{Appendix}
\section{Countermodels to Strong Quasi-Categoricity}

\begin{theorem}\label{thm:zfu-not-strongly-quasi-categorical}
Assume the consistency of an inaccessible cardinal.
\begin{enumerate}
\item It is consistent with \ZFU that \ZFU2 is not strongly quasi-categorical.

\item It is consistent with \ZFCU that \LTU2 + \RP is not strongly quasi-categorical.
\end{enumerate}
\end{theorem}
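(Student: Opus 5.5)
For (1), I will build the counterexample inside a permutation (Fraenkel--Mostowski style) model of \ZFU, and then transfer. Start with a model of \ZFCU with an inaccessible $\kappa$ and a set $U$ of $\kappa$ many urelements, and pass to a symmetric submodel $W\models\ZFU$ in which $U$ is amorphous-like: every subset of $U$ in $W$ is finite or cofinite. The construction uses finite supports over the full symmetric group of $U$, and $\kappa$ remains inaccessible enough that $V_\kappa(A)$ for finite $A$ makes sense. Working in $W$, I would compare two full models with the same height $\kappa$ and the same urelements $U$. The first is $M=V_\kappa(U)^W$, in which \UrSet holds. The second is $N=\bigcup\{V_\kappa(A)\mid A\subseteq U \text{ finite}\}$, in which no set contains all urelements; it is a model by \cref{thm:unions-of-hierarchies}, and it also satisfies \ZFU2.

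To check \Replacement2 in $N$, I would argue that any class function on a set $w\in V_\kappa(A)$ with range unbounded in the kernels would give a function from a set of size $<\kappa$ onto infinitely many finite kernels. The choiceless structure of $W$ should rule this out, since an infinite set of finite subsets of $U$ indexed by a well-orderable or small set would produce an infinite, coinfinite subset of $U$. Pairing, Union and Infinity are immediate because finite kernels are closed under finite unions, and Powerset holds because each $V_\kappa(A)$ is closed under power sets. The models $M$ and $N$ have the same width $|U|$, computed in $W$, and the same height $\kappa$, yet they disagree on \UrSet, so they are not isomorphic. I expect the main obstacle to be verifying \Replacement2 in $N$, and in $M$ without \AC; for this I would use that in $W$ every set is a surjective image of some $V_\beta(A)$ with $A$ finite, combined with the regularity of $\kappa$ in the ground model.

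For (2), \AC is now available, so by \cref{thm:unbounded-strong-quasi-categoricity} the failure must come from a model of \RP without class parameters that violates \Replacement2. Given an inaccessible $\kappa$ and urelements $U$ with $|U|=\kappa$, I would let $M=V_\kappa(U_0)\cup\bigcup\{V_\kappa(A)\mid A\subseteq U,\ |A|<\kappa\}$ for a suitable choice that makes $U$ not a set, and let $N=\bigcup\{V_\kappa(A)\mid A\subseteq U,\ |A|<\lambda\}$ for some $\lambda<\kappa$. Both are unions of hierarchies, hence full \LTU2 models of height $\kappa$ and width $\kappa$. They differ in level-width, since $N$ has no initial level of size $\lambda$ while $M$ does, and this property is definable, so they are non-isomorphic. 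Under \AC, all sets of urelements of a fixed size look the same in these models, and I would obtain first-order \RP from a Löwenheim--Skolem/closure argument. For each formula, take a club of $\mu<\kappa$ such that $\bigcup\{V_\mu(A)\mid A \text{ allowed}, A\subseteq U'\}$, with $U'$ a subset of size $\mu$, is elementary for that formula. Supertransitive reflecting sets then exist by closing under witnesses, using homogeneity of permutations of $U$ to bound the urelements needed. The hard step is this homogeneity argument, and I would lean on the results of Glazer and Yao \cite{GlazerYao2026ReflectionZFU} for reflection in such width-restricted models.
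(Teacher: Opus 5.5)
For (1) you follow the paper's architecture: the basic Fraenkel model $W$, comparing $V_\kappa(U)$ with $\bigcup\{V_\kappa(A)\mid A\subseteq U\text{ finite}\}$. Your amorphousness argument for \Replacement2 in the union model is a fine substitute for the paper's swap argument: a well-orderable family of finite sets of atoms in $W$ has finite union, and $V_\kappa(A)$ is well-orderable in $W$ for finite $A$.

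The gap is \Replacement2 in $V_\kappa(U)^W$. Because you take $|U|=\kappa$, sets in $V_\kappa(U)$ can have ground-model size $\kappa$ (e.g.\ $U$ itself). So the paper's argument, $|F[w]|\le|w|<\kappa$ in the ground model followed by regularity, is unavailable. The lemma you invoke in its place is false. Suppose $g\colon V_\beta(A)\to U$ is a surjection in $W$ with finite support $E$. Every permutation fixing $A\cup E$ pointwise fixes $V_\beta(A)$ pointwise and also fixes $g$, so $\mathrm{ran}(g)\subseteq A\cup E$ is finite. The easy repair is the paper's choice of a countable (or any ${<}\kappa$-sized) set of atoms. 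With $|U|=\kappa$ you would instead need a separate count showing that $\mathrm{fix}(E)$ has fewer than $\kappa$ orbits on $V_\beta(U)^W$.

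Part (2) has more serious problems.

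First, the summand $V_\kappa(U_0)$ in your $M$ is either redundant ($|U_0|<\kappa$) or fatal. If $|U_0|=\kappa$, then $V_\kappa(U_0)$ contains a well-order of $U_0$ of type $\kappa$. That gives a first-order definable surjection of $U_0$ onto $\kappa\notin M$, so \Replacement, and hence \RP, fails.

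Second, $\lambda$ cannot be arbitrary. If $\lambda$ is a limit cardinal, consider the true instance $\forall\xi\in\lambda\,\exists y\,(y\text{ is a set of urelements equinumerous with }\xi)$. It has no collecting set in $N$, since such a set would need kernel of size $\ge\lambda$. So \Collection, and hence \RP, fails; $\lambda$ must be a successor cardinal.

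Third, and most importantly, \RP is the entire content of (2), and your sketch does not establish it. Your candidate reflecting sets are $\bigcup\{V_\mu(A)\mid A\text{ allowed}, A\subseteq U'\}$ with $|U'|=\mu$. When every subset of $U'$ is allowed (always for $M$, and for $N$ when $\mu<\lambda$), this set is just $V_\mu(U')$. That satisfies \UrSet, so it cannot reflect $\neg\UrSet$. For $N$ with $\mu\ge\lambda$, the set has kernel of size $\ge\lambda$ and is not an element of $N$ at all. The reflecting sets must themselves fail \UrSet while having small kernel, and you leave this to an unspecified result.

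The paper avoids half of this difficulty by taking $|A|=\aleph_1$. One model is $V_\kappa(A)$, which satisfies \ZFCU2 together with \UrSet, so \RP is routine there. The other is $\bigcup\{V_\kappa(B)\mid B\subseteq A\text{ countable}\}$, for which first-order \ZFCU and \RP come from Yao's specific results on \Tail.

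Your choice of width $\kappa$ rules out this shortcut. \RP yields \Pairing, \Powerset and \Replacement. So, with \AC outside, a full model of height $\kappa$ satisfying \RP cannot contain a set of $\kappa$ urelements: a well-order of type $\kappa$ would land inside it. You would therefore have to verify \RP in two models that both fail \UrSet.
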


\begin{proof}
(1) Let $U$ be a model of \ZFCU with an inaccessible cardinal $\kappa$. Assume that $\Ur^U$ is a countably infinite set $A$ in $U$. We first construct the basic Fraenkel model in $U$. Let $G$ be the group of all permutations of $A$. An object $x$ is \textit{symmetric} if there is a finite $E\subseteq A$ such that $\pi x = x$ for every $\pi \in G$ that pointwise fixes $E$, where $\pi$ is understood as its canonical extension as in Theorem \ref{thm:level-extension-weak-quasi-categoricity}; such an $E$ is a \textit{support} of $x$. A set $x$ is \textit{hereditarily symmetric} if $x$ and every object in its transitive closure are symmetric. Let $W$ be the class of hereditarily symmetric objects. By standard arguments, $W$ is a transitive inner model of \ZFU which contains the same pure sets and urelements as $U$, and $A$ is a non-well-orderable set in $W$.

We now work in $W$, and define
$$N=V_\kappa(A),\qquad M=\bigcup \{V_\kappa(B) \mid B \subseteq A \text{ is finite}\}.$$
We show that $M$ and $N$ are full models of \ZFU2 in the sense of $W$: their second-order variables range over all sets of their domains. We only show that \Replacement2 holds in both models, as it is routine to verify the other axioms by the usual hierarchy argument.

First consider $N$. Let $w\in N$ be a set and let $F:w \to N$ be any function in $W$. Then $F[w] \subseteq N$. In $U$, we have $|F[w]| \leq |w| < \kappa$ so it follows from the inaccessibility of $\kappa$ that the rank of $F[w]$ is less than $\kappa$. Thus, $F[w] \in N$. Therefore $N$ satisfies \Replacement2 and hence is a full model of \ZFU2.

To verify \Replacement2 in $M$, let $F:w \to M$ be any function in $W$. Fix finite $B\subseteq A$ with $w \in V_\kappa(B)$, and fix a finite support $E$ for $F$. Put $D=B\cup E$. We claim that $\ker(F[w]) \subseteq D$. Otherwise, there is some $x \in w$ such that $\ker(\{F(x)\}) \not\subseteq D$. Fix some $a \in \ker(\{F(x)\}) - D$. Since $F(x) \in M$, $\ker(\{F(x)\})$ is finite and so we can fix some $b \in A - (\ker(\{F(x)\}) \cup D)$. Let $\pi$ swap $a$ and $b$. $\pi$ is an automorphism that pointwise fixes $D$, so it fixes $F$ and every member of $w$. Thus $\pi F(x) = \pi F (\pi x) = F(x)$, but $\pi F(x) \neq F(x)$ as $\pi$ moves $a$ out of the kernel $\ker(\{F(x)\})$---contradiction.

Moreover, the rank of $F[w]$ is less than $\kappa$ by the previous argument, so $W \models F[w] \in V_\kappa(D)$ and so $F[w] \in M$. Thus $M$ satisfies \Replacement2 and is a full model of \ZFU2.

Finally, both $M$ and $N$ have height $\kappa$, and $\Ur^M=\Ur^N=A$. But $N\models\UrSet$ and $M\models\neg\UrSet$, so $M$ and $N$ are not isomorphic. Therefore, \ZFU2 is not strongly quasi-categorical in $W$.

(2) Let $U$ be a model of \ZFCU with an inaccessible cardinal $\kappa$ and a set $A$ of urelements of size $\aleph_1$. Work in $U$ and put
$$N=V_\kappa(A),\qquad M=\bigcup\{V_\kappa(B)\mid B\subseteq A\text{ is countable}\}.$$
$N$ satisfies \ZFCU2 so it has both \LTU2 and \ZFCU. It then follows from Yao \cite[Theorem 2.16]{Yao2026AxiomatizationForcing} that $M$ also satisfies first-order \ZFCU. And it follows from \cref{thm:unions-of-hierarchies} that $M$ satisfies \LTU2. Both models satisfy \RP which only allows first-order parameters. This is because both models satisfy the axiom \Tail introduced in \cite{Yao2026AxiomatizationForcing}, which implies \RP over \ZFCU by \cite[Corollary 2.12.1]{Yao2026AxiomatizationForcing}.

$M$ and $N$ have the same height and the same urelements. But $N\models\UrSet$, whereas $M\models\neg\UrSet$. Thus $M$ and $N$ are not isomorphic, so \LTU2+\RP is not strongly quasi-categorical in $U$.\end{proof}

\printbibliography

\end{document}